\documentclass[a4paper, intlimits, reqno]{amsart}

\usepackage[english]{babel}
\usepackage[T1]{fontenc}
\usepackage[utf8]{inputenc}

\usepackage{amsmath}
\usepackage{amssymb}
\usepackage{MnSymbol}
\usepackage{amsthm}
\allowdisplaybreaks
\usepackage{amsfonts}
\usepackage{mathrsfs} 
\usepackage{mathtools}
\usepackage[all]{xy}
\usepackage{nicefrac}
\usepackage{enumitem}

\usepackage{multicol}
\usepackage{url}
\usepackage{dsfont}
\usepackage[numbers,sort&compress]{natbib}
\usepackage{doi}
\usepackage{prettyref}
\usepackage{xcolor}
\usepackage{orcidlink}

\newrefformat{defn}{Definition \ref{#1}}
\newrefformat{rem}{Remark \ref{#1}}
\newrefformat{sect}{Section \ref{#1}}
\newrefformat{sub}{Section \ref{#1}}
\newrefformat{prop}{Proposition \ref{#1}}
\newrefformat{thm}{Theorem \ref{#1}}
\newrefformat{cor}{Corollary \ref{#1}}
\newrefformat{ex}{Example \ref{#1}}


\swapnumbers
\newtheoremstyle{dotless}{}{}{\itshape}{}{\bfseries}{}{}{}
\theoremstyle{dotless}
\theoremstyle{plain}
\newtheorem{thm}{Theorem}[section]

\newtheorem{prop}[thm]{Proposition}
\newtheorem{cor}[thm]{Corollary}
\theoremstyle{definition}
\newtheorem{defn}[thm]{Definition}
\newtheorem{rem}[thm]{Remark}
\newtheorem{exa}[thm]{Example}
\newcommand{\N} {\mathbb{N}}

\newcommand{\R} {\mathbb{R}}
\newcommand{\C} {\mathbb{C}}
\newcommand{\K} {\mathbb{K}}

\newcommand{\F} {\mathcal{F}(\Omega)}

\newcommand{\acx} {\operatorname{acx}}

\DeclareMathOperator{\id}{id}
\newcommand{\vertiii}[1]{{\left\vert\kern-0.25ex\left\vert\kern-0.25ex\left\vert #1 
    \right\vert\kern-0.25ex\right\vert\kern-0.25ex\right\vert}}

\makeatletter
\newcommand{\fakephantomsection}{%
  \Hy@GlobalStepCount\Hy@linkcounter%
  \Hy@MakeCurrentHref{\@currenvir.\the\Hy@linkcounter}
  \Hy@raisedlink{\hyper@anchorstart{\@currentHref}\hyper@anchorend}%
}
\makeatother

\begin{document}

\title[On linearisation and existence of preduals]{On linearisation and existence of preduals}
\author[K.~Kruse]{Karsten Kruse\,\orcidlink{0000-0003-1864-4915}}
\address[Karsten Kruse]{University of Twente, Department of Applied Mathematics, P.O. Box 217, 7500 AE Enschede, The Netherlands, and Hamburg University of Technology, Institute of Mathematics, Am Schwarzenberg-Campus~3, 21073 Hamburg, Germany}

\email{k.kruse@utwente.nl}

\subjclass[2020]{Primary 46A08, 46A20 Secondary 46A70, 46B10, 46E10}

\keywords{dual space, predual, linearisation, mixed topology}

\date{\today}
\begin{abstract}
We study the problem of existence of preduals of locally convex Hausdorff spaces. We derive necessary and sufficient 
conditions for the existence of a predual with certain properties of a bornological locally convex Hausdorff space $X$. 
Then we turn to the case that $X=\F$ is a space of scalar-valued functions on a non-empty set $\Omega$ and 
characterise those among them which admit a special predual, namely a \emph{strong linearisation}, 
i.e.~there are a locally convex Hausdorff space $Y$, a map $\delta\colon\Omega\to Y$ and a topological isomorphism 
$T\colon\F\to Y_{b}'$ such that $T(f)\circ \delta= f$ for all $f\in\F$. 
\end{abstract}
\maketitle

\section{Introduction}\label{sect:intro}

The present paper is dedicated to preduals of locally convex Hausdorff spaces, in particular their existence. 
A \emph{predual} of a locally convex Hausdorff space $X$ is a tuple $(Y,\varphi)$ of a locally convex Hausdorff space $Y$ 
and a topological isomorphism $\varphi\colon X\to Y_{b}'$ where $Y_{b}'$ is the strong dual of $Y$. 
The space $X$ is called a \emph{dual space}.
This topic is thoroughly studied in the case of Banach spaces $X$, mostly with regard to isometric preduals. Necessary 
and sufficient conditions for the existence of a Banach predual of a Banach space are due to Dixmier 
\cite[Th\'{e}or\`{e}me 17', p.~1069]{dixmier1948} and were augmented by Waelbroeck \cite[Proposition 1, p.~122]{waelbroeck1966}, 
Ng \cite[Theorem 1, p.~279]{ng1971} and Kaijser \cite[Theorem 1, p.~325]{kaijser1977}.  

All of the preceding results on the existence of a Banach predual of a Banach space involve (relative) compactness with respect 
to an additional locally convex topology on $X$ and were transfered to (ultra)bornological locally convex Hausdorff spaces 
by Mujica \cite[Theorem 1, p.~320--321]{mujica1984a} and then generalised by Bierstedt and Bonet \cite{bierstedt1992}. Namely, 
if $(X,\tau)$ is a bornological locally convex Hausdorff space such that there exists a locally convex Hausdorff topology 
$\widetilde{\tau}$ on $X$ such that
\begin{enumerate}
\item[$\operatorname{(BBC)}$] every $\tau$-bounded subset of $X$ is contained in an absolutely convex $\tau$-bounded $\widetilde{\tau}$-compact set, and 
\item[$\operatorname{(CNC)}$] $\tau$ has a $0$-neighbourhood basis of absolutely convex $\widetilde{\tau}$-closed sets, 
\end{enumerate}
then $(X,\tau)$ has a complete barrelled predual by \cite[1.~Theorem (Mujica), 2.~Corollary, p.~115]{bierstedt1992}. 
Besides noting that $\operatorname{(BBC)}$ and $\operatorname{(CNC)}$ are also necessary for the existence of a complete 
barrelled predual, we derive several other necessary and sufficient conditions for the existence of a complete barrelled predual 
in \prettyref{cor:existence_cb_predual}. In particular, we show that the existence of a \emph{semi-Montel prebidual} $(Y,\varphi)$, 
i.e.~$Y$ is a semi-Montel space and $\varphi\colon (X,\tau)\to (Y_{b}')_{b}'$ a topological isomorphism, such that $Y_{b}'$ is complete 
is a necessary and sufficient condition for the existence of a complete barrelled predual. 
Moreover, we refine such conditions to characterise which Fr\'echet spaces, complete bornological DF-spaces 
and completely normable spaces have complete barrelled DF-preduals, Fr\'echet preduals and Banach preduals, respectively, in 
\prettyref{cor:existence_cbDF_predual}, \prettyref{cor:existence_frechet_predual} and \prettyref{cor:existence_banach_predual}. 
We adapt ideas from the theory of Saks spaces and mixed topologies by Cooper \cite{cooper1978} to achieve this. 

Then we extend these results to obtain necessary and sufficient conditions in \prettyref{thm:existence_scb_linearisation}, 
\prettyref{cor:existence_scbDF_linearisation}, \prettyref{cor:existence_sF_linearisation} and 
\prettyref{cor:existence_sB_linearisation} for the existence of strong linearisations which are 
special preduals of locally convex Hausdorff spaces $\F$ of $\K$-valued functions on a non-empty set $\Omega$ where 
$\K=\R$ or $\C$. We call a triple $(\delta,Y,T)$ of a locally convex Hausdorff space $Y$ over the field $\K$, 
a map $\delta\colon\Omega\to Y$ and a topological isomorphism $T\colon\F\to Y_{b}'$ a \emph{strong linearisation of} $\F$ 
if $T(f)\circ \delta= f$ for all $f\in\F$ (see \cite[p.~683]{carando2004}, \cite[p.~181, 184]{jaramillo2009} 
and \prettyref{prop:equivalent_def_of_linearisation}). In comparison to the necessary and sufficient 
conditions $\operatorname{(BBC)}$ and $\operatorname{(CNC)}$ for the existence of a complete barrelled predual of a 
bornological space $X=\F$ we only have to add that $\widetilde{\tau}$ is finer than the topology of pointwise convergence 
$\tau_{\operatorname{p}}$ to obtain necessary 
and sufficient conditions for the existence of a strong linearisation with complete barrelled $Y$ 
(see \prettyref{thm:existence_scb_linearisation}). 
In \prettyref{cor:existence_scbDF_linearisation}, \prettyref{cor:existence_sF_linearisation} and 
\prettyref{cor:existence_sB_linearisation} we derive corresponding conditions that guarantee that 
$Y$ is a complete barrelled DF-space, Fr\'echet space and completely normable space, respectively. 
Moreover, in \prettyref{thm:existence_scb_cont_linearisation} we give a result on \emph{continuous} strong linearisations, i.e.~where $\delta$ is in addition continuous and $\Omega$ a topological Hausdorff space, 
which generalises \cite[Theorem 2.2, Corollary 2.3, p.~188--189]{jaramillo2009}.
Linearisations are a useful tool since they identify (usually) non-linear functions $f$ with (continuous) linear operators $T(f)$ and thus allow to apply linear functional analysis to non-linear functions. 
We refer the reader who is also interested in the corresponding results of the present paper in the isometric Banach setting, 
where $\F$ and $Y$ are Banach spaces and $T$ an isometry, to \cite{kruse2023c}.

\section{Notions and preliminaries}
\label{sect:notions}

In this short section we recall some basic notions from the theory of locally convex spaces and present some prelimary results 
on dual spaces and their preduals. 
For a locally convex Hausdorff space $X$ over the field $\K\coloneqq\R$ or $\C$ we denote by $X'$ the topological linear dual space 
and by $U^{\circ}$ the \emph{polar set} of a subset $U\subset X$. 
If we want to emphasize the dependency on the locally convex Hausdorff topology $\tau$ of $X$, we write $(X,\tau)$ and 
$(X,\tau)'$ instead of just $X$ and $X'$, respectively. We denote by $\sigma(X',X)$ the topology on $X'$ of uniform convergence 
on finite subsets of $X$, by $\tau_{\operatorname{c}}(X',X)$ the topology on $X'$ of uniform convergence 
on compact subsets of $X$ and by $\beta(X',X)$ the topology on $X'$ of uniform convergence on bounded subsets of $X$. 
Further, we set $X_{b}'\coloneqq (X',\beta(X',X))$. 
Furthermore, we say that a linear map $T\colon X\to Y$ between two locally convex Hausdorff spaces $X$ and $Y$ is \emph{(locally) 
bounded} if it maps bounded sets to bounded sets. 
Moreover, for two locally convex Hausdorff topologies $\tau_{0}$ and $\tau_{1}$ 
on $X$ we write $\tau_{0}\leq\tau_{1}$ if $\tau_{0}$ is coarser than $\tau_{1}$. For a normed space $(X,\|\cdot\|)$ we denote by 
$B_{\|\cdot\|}\coloneqq\{x\in X\;|\;\|x\|\leq 1\}$ the $\|\cdot\|$-closed unit ball of $X$. Further, we write 
$\tau_{\operatorname{co}}$ for the \emph{compact-open topology}, i.e.~the topology of uniform convergence on compact subsets of 
$\Omega$, on the space $\mathcal{C}(\Omega)$ of $\K$-valued continuous functions on a topological Hausdorff space $\Omega$. 
In addition, we write $\tau_{\operatorname{p}}$ for the \emph{topology of pointwise convergence} on the space $\K^{\Omega}$ of $\K$-valued functions on a set $\Omega$. By a slight abuse of notation we also use the symbols $\tau_{\operatorname{co}}$ and 
$\tau_{\operatorname{p}}$ for the relative compact-open topology and the relative topology of pointwise convergence on topological subspaces of $\mathcal{C}(\Omega)$ and $\K^{\Omega}$, respectively. 
For further unexplained notions on the theory of locally convex Hausdorff spaces we refer the reader to \cite{jarchow1981,meisevogt1997,bonet1987}.

\begin{defn}\label{defn:predual}
Let $X$ be a locally convex Hausdorff space. We call $X$ a \emph{dual space} if there are a locally convex Hausdorff space $Y$ and a topological isomorphism $\varphi\colon X\to Y_{b}'$. The tuple $(Y,\varphi)$ is called a \emph{predual} of $X$. 
\end{defn}

In the context of dual Banach spaces the preceding definition of a predual is already given 
e.g.~in \cite[p.~321]{brown1975}. 
If $X$ is a dual space with a quasi-barrelled predual, we may consider this predual as a 
topological subspace of the strong dual of $X$.

\begin{prop}\label{prop:predual_into_dual}
Let $X$ be a dual space with quasi-barrelled predual $(Y,\varphi)$.
Then the map 
\[
\Phi_{\varphi}\colon Y\to X_{b}',\;y\longmapsto[x \mapsto \varphi(x)(y)],
\]
is a topological isomorphism into, i.e.~a topological isomorphism to its range. 
\end{prop}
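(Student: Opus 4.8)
The plan is to realise $\Phi_{\varphi}$ as (essentially) the composition of the canonical evaluation map of $Y$ into its strong bidual with the map $\varphi$, and to control everything through polar sets. First I would check that $\Phi_{\varphi}$ is well defined and linear: for fixed $y\in Y$ the evaluation functional $\eta\mapsto\eta(y)$ on $Y'$ is $\sigma(Y',Y)$-continuous, hence $\beta(Y',Y)$-continuous, so $\Phi_{\varphi}(y)=[\eta\mapsto\eta(y)]\circ\varphi$ belongs to $X'$ because $\varphi\colon X\to Y_{b}'$ is continuous; linearity of $y\mapsto\Phi_{\varphi}(y)$ is immediate. For injectivity, if $\Phi_{\varphi}(y)=0$ then $\varphi(x)(y)=0$ for all $x\in X$, and since $\varphi$ is onto $Y'$ this forces $\eta(y)=0$ for every $\eta\in Y'$; as $Y'$ separates the points of the Hausdorff space $Y$, we get $y=0$.

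For continuity of $\Phi_{\varphi}\colon Y\to X_{b}'$ I would use that the polars $B^{\circ}$ of the bounded subsets $B\subset X$ form a $0$-neighbourhood basis of $X_{b}'$, and compute, with the polar of $\varphi(B)$ taken in $Y$,
\[
\Phi_{\varphi}^{-1}(B^{\circ})=\{y\in Y\mid |\eta(y)|\leq 1\text{ for all }\eta\in\varphi(B)\}=(\varphi(B))^{\circ}.
\]
Since $\varphi$ is continuous, it maps the bounded set $B$ to a bounded subset of $Y_{b}'$, and here the hypothesis enters: as $Y$ is quasi-barrelled, every bounded subset of $Y_{b}'$ is equicontinuous, so $(\varphi(B))^{\circ}$ is a $0$-neighbourhood in $Y$. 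I expect this to be the crux of the argument — recognising that quasi-barrelledness of $Y$ is exactly what turns the bounded subsets of $Y_{b}'$ into equicontinuous sets and thereby makes $\Phi_{\varphi}$ continuous.

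It remains to show that $\Phi_{\varphi}$ is relatively open, i.e.~a topological isomorphism onto its range. Given an absolutely convex closed $0$-neighbourhood $U$ in $Y$ (such sets form a basis), its polar $U^{\circ}\subset Y'$ is equicontinuous, hence a bounded subset of $Y_{b}'$, so $B\coloneqq\varphi^{-1}(U^{\circ})$ is a bounded subset of $X$ because $\varphi^{-1}\colon Y_{b}'\to X$ is continuous. Using that $\varphi$ is onto $Y'$, I get $\varphi(B)=U^{\circ}$, whence by the computation above $\Phi_{\varphi}^{-1}(B^{\circ})=(U^{\circ})^{\circ}=U^{\circ\circ}$, and the bipolar theorem together with $U$ being absolutely convex and $\tau$-closed (hence $\sigma(Y,Y')$-closed) yields $U^{\circ\circ}=U$. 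Consequently $\Phi_{\varphi}(U)=\Phi_{\varphi}(Y)\cap B^{\circ}$, so $\Phi_{\varphi}$ carries the $0$-neighbourhood $U$ onto a relative $0$-neighbourhood of its range. Combining the steps, $\Phi_{\varphi}$ is an injective, continuous, relatively open linear map, i.e.~a topological isomorphism onto its range, which is the claim.
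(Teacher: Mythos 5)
Your proof is correct and rests on the same idea as the paper's: $\Phi_{\varphi}$ is the canonical evaluation $\mathcal{J}_{Y}\colon Y\to (Y_{b}')_{b}'$ transported by $\varphi$, with quasi-barrelledness of $Y$ entering exactly where you say it does. The only difference is packaging: the paper cites the fact that $\mathcal{J}_{Y}$ is a topological isomorphism into for quasi-barrelled $Y$ and transfers the seminorms through the one-to-one correspondence $B\mapsto\varphi(B)$ between bounded sets, whereas you re-derive that fact inline via the polar identity $\Phi_{\varphi}^{-1}(B^{\circ})=(\varphi(B))^{\circ}$ and the bipolar theorem; both routes are sound.
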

\begin{proof}
Since $Y$ is quasi-barrelled, the evaluation map $\mathcal{J}_{Y}\colon Y\to (Y_{b}')_{b}'$, $y\longmapsto [y'\mapsto y'(y)]$, 
is a topological isomorphism into by \cite[11.2.2. Proposition, p.~222]{jarchow1981}. 
Furthermore, the map $\varphi$ gives a ono-to-one correspondence 
between the bounded subsets of $X$ and the bounded subsets of $Y_{b}'$. We observe that 
for every bounded set $B\subset X$ it holds that 
\[
 \sup_{x\in B}|\Phi_{\varphi}(y)(x)|
=\sup_{x\in B}|\varphi(x)(y)|
=\sup_{x\in B}|\mathcal{J}_{Y}(y)(\varphi(x))|
=\sup_{y'\in \varphi(B)}|\mathcal{J}_{Y}(y)(y')|
\]
for all $y\in Y$, which proves the claim.
\end{proof}

Next, we come to linearisations of function spaces.

\begin{defn}\label{defn:linearisation}
Let $\F$ be a linear space of $\K$-valued functions on a non-empty set $\Omega$. 
\begin{enumerate}
\item[(a)] We call a triple $(\delta,Y,T)$ of a locally convex Hausdorff space $Y$ over the field $\K$, 
a map $\delta\colon\Omega\to Y$ and an algebraic isomorphism $T\colon\F\to Y'$ a \emph{linearisation of} $\F$ 
if $T(f)\circ \delta= f$ for all $f\in\F$. 
\item[(b)] Let $\Omega$ be a topological Hausdorff space. We call a linearisation $(\delta,Y,T)$ of $\F$ \emph{continuous} 
if $\delta$ is continuous.
\item[(c)] Let $\F$ be a locally convex Hausdorff space. We call a linearisation $(\delta,Y,T)$ of $\F$ \emph{strong} if $T\colon\F\to Y_{b}'$ is a topological isomorphism. 
\item[(d)] We call a (strong) linearisation $(\delta,Y,T)$ of $\F$ a (strong) \emph{complete barrelled (Fr\'echet, DF-, Banach) linearisation} if $Y$ is a complete barrelled (Fr\'echet, DF-, completely normable) space.
\item[(e)] We say that $\F$ \emph{admits a (continuous, strong, complete barrelled, Fr\'echet, DF-, Banach) linearisation} 
if there exists a (continuous, strong, complete barrelled, Fr\'echet, DF-, Banach) linearisation $(\delta,Y,T)$ 
of $\F$.
\end{enumerate}
\end{defn}

Clearly, a strong linearisation $(\delta,Y,T)$ of $\F$ gives us the predual $(Y,T)$ of $\F$. 
\prettyref{defn:linearisation} (c), (d) and (e) are motivated by the definition of a strong Banach linearisation given in 
\cite[p.~184, 187]{jaramillo2009}. 

\begin{exa}\label{ex:linearisation_of_l1}
Let $(\ell^{1},\|\cdot\|_{1})$ denote the Banach space of complex absolutely summable sequences on $\N$, 
$(c_{0},\|\cdot\|_{\infty})$ the Banach space of complex zero sequences 
and $(c,\|\cdot\|_{\infty})$ the Banach space of complex convergent sequences, all three equipped with their usual norms. 

(i) We define the topological isomorphism 
\[
\varphi_{0}\colon (\ell^{1},\|\cdot\|_{1})\to (c_{0},\|\cdot\|_{\infty})_{b}',\;\varphi_{0}(x)(y)\coloneqq \sum_{k=1}^{\infty}x_{k}y_{k}.
\]
Then the tuple $(c_{0},\varphi)$ is a predual of $\ell^{1}$ and we note that
$
x_{n}=\varphi_{0}(x)(e_{n})
$
for all $x\in \ell^{1}$ and $n\in\N$ where $e_{n}$ denotes the $n$-th unit sequence. Setting 
$\delta\colon\N\to c_{0}$, $\delta(n)\coloneqq e_{n}$, we get the strong Banach linearisation 
$(\delta,c_{0},\varphi_{0})$ of $\ell^{1}$. 

(ii) We define the topological isomorphism 
\[
\varphi_{c}\colon (\ell^{1},\|\cdot\|_{1})\to (c,\|\cdot\|_{\infty})_{b}',\;
\varphi_{c}(x)(y)\coloneqq y_{\infty}x_{1}+\sum_{k=1}^{\infty}y_{k}x_{k+1},
\]
where $y_{\infty}\coloneqq\lim_{n\to\infty}y_{n}$ for $y\in c$. 
Then the tuple $(c,\varphi_{c})$ is a predual of $\ell^{1}$ and we claim that there is no 
$\delta_{c}\colon\N\to c$ such that $(\delta_{c},c,\varphi_{c})$ is a linearisation of $\ell^{1}$. 
Suppose the contrary. Then we have 
$
x_{n}=\varphi_{c}(x)(\delta_{c}(n))
$
for all $x\in \ell^{1}$ and $n\in\N$. In particular, we get 
$1=\varphi_{c}(e_{1})(\delta_{c}(1))=\delta_{c}(1)_{\infty}$ and 
$0=\varphi_{c}(e_{m})(\delta_{c}(1))=\delta_{c}(1)_{m-1}$ for all $m\geq 2$, which is a contradiction. 

(iii) We may fix the problem in (ii) by using a different isomorphism. 
We define the topological isomorphism
\[
\psi\colon \;(\ell^{1},\|\cdot\|_{1})\to (c,\|\cdot\|_{\infty})_{b}',\;\psi(x)(y)\coloneqq y_{\infty}x_{1}+\sum_{k=1}^{\infty}(y_{k}-y_{\infty})x_{k+1}.
\]
Then the tuple $(c,\psi)$ is a predual of $\ell^{1}$. To obtain a linearisation of $\ell^{1}$ from this predual, we have to 
to find $\widetilde{\delta}(n)\in c$ such that
$
x_{n}=\psi(x)(\widetilde{\delta}(n))
$ 
for all $x\in\ell^{1}$ and $n\in\N$. Using the unit sequences $e_{m}\in\ell^{1}$, $m\in\N$, 
we see that $\widetilde{\delta}(n)$ has to fulfil
\[
e_{1,n}=\psi(e_{1})(\widetilde{\delta}(n))=\widetilde{\delta}(n)_{\infty},
\]
which implies $\widetilde{\delta}(1)_{\infty}=1$ and $\widetilde{\delta}(n)_{\infty}=0$ for all $n\geq 2$. Further, $\widetilde{\delta}(n)$ has to fulfil for $m\geq 2$
\[
e_{m,n}=\psi(e_{m})(\widetilde{\delta}(n))=\widetilde{\delta}(n)_{m-1}-\widetilde{\delta}(n)_{\infty}.
\]
This yields $\widetilde{\delta}(1)_{m-1}=1$ for all for $m\geq 2$, $\widetilde{\delta}(n)_{m-1}=1$ if $n=m$, $n\geq 2$, and 
$\widetilde{\delta}(n)_{m-1}=0$ if $n\neq m$, $n\geq 2$. So setting $\widetilde{\delta}(1)\coloneqq (1,1,\ldots)\in c$ and 
$\widetilde{\delta}(n)\coloneqq e_{n-1}\in c$ for all $n\geq 2$ and observing that $(e_{m})_{m\in\N}$ 
is a Schauder basis of $\ell^{1}$, we obtain the continuous strong Banach linearisation 
$(\widetilde{\delta},c,\psi)$ of $\ell^{1}$ if $\N$ is equipped with the Hausdorff topology induced by the absolute value $|\cdot|$.
The tuple $(\widetilde{\delta},c)$ is also given in \cite[Example 9, p.~699--700]{carando2004} as a continuous linearisation of $\ell^{1}$ (see \prettyref{prop:equivalent_def_of_linearisation}), 
however, without the information which isomorphism was used to derive $\widetilde{\delta}$ . 
\end{exa}

Our next goal is to compare our definition of a linearisation to the one given e.g.~in 
\cite[p.~181]{jaramillo2009} and \cite[p.~683]{carando2004} and to show that both definitions are equivalent.\footnote{Both definitions are 
equivalent if additional assumptions on $\delta$ are neglected. 
In \cite{jaramillo2009} continuous linearisations are considered and in \cite{carando2004} linearisations such that $\delta$ is 
of the ``same type'' as the functions in $\F$. The constructed linearisation in \cite{carando2004} is continuous, 
see \cite[Theorem 2, p.~689]{carando2004}, and the type of $\delta$ is handled in \cite[Proposition 2, p.~688]{carando2004}.}

\begin{prop}\label{prop:equivalent_def_of_linearisation_tuple}
Let $\F$ be a linear space of $\K$-valued functions on a non-empty set $\Omega$, 
$Y$ a locally convex Hausdorff space over the field $\K$ and $\delta\colon\Omega\to Y$. 
Consider the following conditions for the tuple $(\delta, Y)$.
\begin{enumerate}
\item[(i)] For every continuous linear functional $y'\in Y'$ it holds $y'\circ\delta\in\F$.
\item[(ii)] For every $f\in\F$ there is a unique continuous linear functional $T_{f}\in Y'$ such that 
$T_{f}\circ \delta= f$.
\end{enumerate}
Then the following assertions hold.
\begin{enumerate}
\item[(a)] If condition (ii) is fulfilled, then the map $T\colon\F\to Y'$, $T(f)\coloneqq T_{f}$, is linear and injective. 
\item[(b)] If conditions (i) and (ii) are fulfilled, then the map $T$ is linear and bijective, 
$T^{-1}(y')=y'\circ\delta$ for all $y'\in Y'$, $(\delta,Y,T)$ is a linearisation of $\F$ and 
the span of $\{\delta(x)\;|\;x\in\Omega\}$ dense in $Y$.
\end{enumerate}
\end{prop}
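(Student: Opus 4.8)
The plan is to extract all four assertions from the uniqueness clause in condition (ii) together with the linear structure of $\F$ and $Y'$; the only non-routine ingredient is the Hahn--Banach separation theorem, which is needed solely for the density statement in (b).

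For (a), I would first check linearity. Given $f,g\in\F$ and $\lambda\in\K$, the function $f+\lambda g$ belongs to $\F$ since $\F$ is a linear space, the functional $T_{f}+\lambda T_{g}$ belongs to $Y'$, and $(T_{f}+\lambda T_{g})\circ\delta=(T_{f}\circ\delta)+\lambda(T_{g}\circ\delta)=f+\lambda g$; hence the uniqueness in (ii) forces $T_{f+\lambda g}=T_{f}+\lambda T_{g}$, i.e.~$T$ is linear. Injectivity is then immediate: if $T(f)=0$, then $f=T_{f}\circ\delta=0$, so $T$ has trivial kernel.

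For (b), assume in addition (i). To show that $T$ is onto, fix $y'\in Y'$; by (i) the function $f\coloneqq y'\circ\delta$ lies in $\F$, and $y'$ is a continuous linear functional with $y'\circ\delta=f$, so the uniqueness in (ii) yields $T(f)=T_{f}=y'$. Thus $T$ is bijective, and from $T(y'\circ\delta)=y'$ we read off $T^{-1}(y')=y'\circ\delta$ for every $y'\in Y'$. Since $T(f)\circ\delta=T_{f}\circ\delta=f$ holds for all $f\in\F$ by construction and $T$ is an algebraic isomorphism onto $Y'$, the triple $(\delta,Y,T)$ is a linearisation of $\F$ in the sense of \prettyref{defn:linearisation} (a).

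Finally, I would prove that $M\coloneqq\operatorname{span}\{\delta(x)\;|\;x\in\Omega\}$ is dense in $Y$. Suppose it is not; since $Y$ is locally convex and Hausdorff, the Hahn--Banach separation theorem provides a non-zero $y'\in Y'$ vanishing on $\overline{M}$, hence on each $\delta(x)$, so that $y'\circ\delta=0$. Then $T^{-1}(y')=y'\circ\delta=0$, whence $y'=T(0)=0$, a contradiction; alternatively one may apply the uniqueness in (ii) directly to $f=0\in\F$. Hence $M$ is dense in $Y$, which completes the argument. The main (and only real) obstacle is this density step, but it reduces at once to Hahn--Banach separation.
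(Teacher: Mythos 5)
Your proof is correct and follows essentially the same route as the paper's: uniqueness in (ii) yields linearity and injectivity, condition (i) plus uniqueness yields surjectivity and the formula $T^{-1}(y')=y'\circ\delta$, and the density of $\operatorname{span}\{\delta(x)\;|\;x\in\Omega\}$ is obtained by producing a non-zero functional vanishing on it and deriving a contradiction. The only cosmetic difference is that you invoke Hahn--Banach separation where the paper cites the bipolar theorem, and you conclude the density step via $y'=T(0)=0$ rather than via a non-zero preimage $f^{u}$ with $f^{u}=u\circ\delta=0$; these are interchangeable.
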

\begin{proof}
(a) By condition (ii) for every $f\in\F$ there is a unique $T_{f}\in Y'$ such that 
$T_{f}\circ\delta=f$. Thus the map $T$ is well-defined. Let $f,g\in\F$ with $T_{f}=T_{g}$, then we get 
\[
f=T_{f}\circ\delta=T_{g}\circ\delta=g,
\]
which implies that $T$ is injective. Next, we turn to linearity. Let $f,g\in\F$ and $\lambda\in\K$. Then we have 
$(T_{f}+T_{g})\circ\delta=f+g=(T_{f+g})\circ\delta$ and $T_{\lambda f}\circ\delta=\lambda f=(\lambda T_{f})\circ\delta$. Due to uniqueness we get that $T$ is linear. 

(b) First, we show that $T$ is surjective. Let $y'\in Y'$. Then $f_{y'}\coloneqq y'\circ\delta\in\F$ by condition (i), 
$T(f_{y'})\in Y'$ and
\[
T(f_{y'})\circ\delta=f_{y'}=y'\circ\delta
\]
by condition (ii). The uniqueness of the functional in $Y'$ in condition (ii) implies that $T(y'\circ\delta)=T(f_{y'})=y'$. 
Hence $T$ is surjective, so bijective by part (a), and $T^{-1}(y')=y'\circ\delta$ for all $y'\in Y'$ 
and $(\delta,Y,T)$ is also a linearisation of $\F$.

Second, suppose that the span of $\{\delta(x)\;|\;x\in\Omega\}$ is not dense in $Y$. 
Then there is $u\in Y'$, $u\neq 0$, such that $u(\delta(x))=0$ for all $x\in\Omega$ by the bipolar theorem. 
Since $T$ is bijective, there is $f^{u}\in\F$, $f^{u}\neq 0$, such that $T(f^{u})=u$. It follows that 
$f^{u}(x)=(T(f^{u})\circ\delta)(x)=(u\circ\delta)(x)=0$ for all $x\in\Omega$ by condition (ii), which is a 
contradiction. 
\end{proof}

In \cite[p.~181]{jaramillo2009} a linearisation of $\F$ is defined as a tuple $(\delta,Y)$ 
that fulfils conditions (i) and (ii) of \prettyref{prop:equivalent_def_of_linearisation_tuple} (if we ignore the assumption 
that $\delta$ is continuous in \cite[p.~181]{jaramillo2009}).
The following result shows that our definition of a linearisation is equivalent to the one 
in \cite[p.~181]{jaramillo2009}.

\begin{prop}\label{prop:equivalent_def_of_linearisation}
Let $\F$ be a linear space of $\K$-valued functions on a non-empty set $\Omega$, 
$Y$ a locally convex Hausdorff space over the field $\K$ and $\delta\colon\Omega\to Y$. 
Then the following assertions are equivalent.
\begin{enumerate}
\item[(a)] $(\delta,Y)$ fulfils conditions (i) and (ii) of \prettyref{prop:equivalent_def_of_linearisation_tuple}.
\item[(b)] There is a (unique) algebraic isomorphism $T\colon \F\to Y'$ such that $(\delta,Y,T)$ is a linearisation of $\F$. 
\end{enumerate}  
\end{prop}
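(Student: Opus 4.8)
The plan is to prove the equivalence by moving between the two formulations via \prettyref{prop:equivalent_def_of_linearisation_tuple}. The direction (a)$\Rightarrow$(b) is essentially immediate: assuming $(\delta,Y)$ satisfies conditions (i) and (ii), part (b) of \prettyref{prop:equivalent_def_of_linearisation_tuple} already hands us an algebraic isomorphism $T\colon\F\to Y'$ with $T(f)\coloneqq T_{f}$ such that $(\delta,Y,T)$ is a linearisation; so essentially nothing is left to do except record this and verify uniqueness of $T$. Uniqueness follows because any algebraic isomorphism $S\colon\F\to Y'$ making $(\delta,Y,S)$ a linearisation must satisfy $S(f)\circ\delta=f=T_f\circ\delta$ for all $f\in\F$, and condition (ii) forces $S(f)=T_f$ pointwise on the span of $\{\delta(x)\mid x\in\Omega\}$, which is dense in $Y$ (again by \prettyref{prop:equivalent_def_of_linearisation_tuple}(b)); continuity of $S(f)$ and $T_f$ then gives $S(f)=T_f$, i.e.\ $S=T$.

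For the converse (b)$\Rightarrow$(a), suppose we are given an algebraic isomorphism $T\colon\F\to Y'$ with $(\delta,Y,T)$ a linearisation, so $T(f)\circ\delta=f$ for all $f\in\F$. I would verify condition (i) first: for $y'\in Y'$, surjectivity of $T$ yields $f\in\F$ with $T(f)=y'$, whence $y'\circ\delta=T(f)\circ\delta=f\in\F$. For condition (ii), given $f\in\F$, set $T_f\coloneqq T(f)\in Y'$; then $T_f\circ\delta=f$ by the linearisation property, so existence holds. For uniqueness, if $y'\in Y'$ also satisfies $y'\circ\delta=f$, then by the direction just proved $y'\circ\delta\in\F$ and $T(y'\circ\delta)=y'$ — but here I would argue more directly: $y'\circ\delta=f=T(f)\circ\delta$, and by injectivity of $T$ together with the identity $T^{-1}(u)=u\circ\delta$ (which holds by \prettyref{prop:equivalent_def_of_linearisation_tuple}(b) once we know (i) and (ii), but can also be seen from $T(f)\circ\delta=f$ applied to $f=T^{-1}(u)$) we get $y'=T(y'\circ\delta)=T(f)=T_f$. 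Alternatively, and perhaps more cleanly, one can first establish that $\operatorname{span}\{\delta(x)\mid x\in\Omega\}$ is dense in $Y$ directly from bijectivity of $T$ (the same bipolar-theorem argument as in the proof of \prettyref{prop:equivalent_def_of_linearisation_tuple}(b) goes through verbatim, using only that $T$ is an algebraic isomorphism and a linearisation), and then uniqueness in (ii) is automatic since two continuous functionals agreeing on a dense set coincide.

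The main subtlety — though it is mild — is making sure the uniqueness claims in (ii) and in the parenthetical of (b) are handled without circular reference: the density of $\operatorname{span}\{\delta(x)\mid x\in\Omega\}$ in $Y$ should be derived from the bijectivity of $T$ alone (via the bipolar theorem, exactly as in the existing proof), not from conditions (i) and (ii), since in the (b)$\Rightarrow$(a) direction those are what we are trying to prove. Once density is in hand, every uniqueness assertion reduces to the standard fact that continuous linear functionals on a locally convex space are determined by their values on a dense subspace. I expect the whole proof to be short: one paragraph for each implication, with the density argument quoted from the proof of \prettyref{prop:equivalent_def_of_linearisation_tuple}.
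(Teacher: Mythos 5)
Your proposal is correct and follows essentially the same route as the paper: existence and uniqueness of $T$ in (a)$\Rightarrow$(b) via \prettyref{prop:equivalent_def_of_linearisation_tuple}\,(b) and density of $\operatorname{span}\{\delta(x)\;|\;x\in\Omega\}$, and in (b)$\Rightarrow$(a) condition (i) from surjectivity and condition (ii) from that same density, which the paper likewise extracts from the bijectivity of $T$ alone via the second part of the proof of \prettyref{prop:equivalent_def_of_linearisation_tuple}\,(b). Your remark on avoiding circularity is exactly the point the paper's proof implicitly relies on, so nothing is missing.
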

\begin{proof}
(a)$\Rightarrow$(b) The existence of the algebraic isomorphism $T$ in (b) follows from 
\prettyref{prop:equivalent_def_of_linearisation_tuple} (b). Let $\widetilde{T}$ be another algebraic isomorphism such that 
$(\delta,Y,\widetilde{T})$ is a linearisation of $\F$. Let $f\in \F$. Then we have 
$
\widetilde{T}(f)(\delta(x))=f=T(f)(\delta(x))
$ 
for all $x\in\Omega$. So $\widetilde{T}(f)$ and $T(f)$ coincide on 
the span of $\{\delta(x)\;|\;x\in\Omega\}$, which is dense in $Y$ by \prettyref{prop:equivalent_def_of_linearisation_tuple} (b). 
Hence the continuity of $\widetilde{T}(f)$ and $T(f)$ implies that $\widetilde{T}(f)=T(f)$ on $Y$, which settles the uniqueness.

(b)$\Rightarrow$(a) Let $T\colon \F\to Y'$ be an algebraic isomorphism such that $(\delta,Y,T)$ is a linearisation of $\F$. 
Condition (i) follows from the surjectivity of $T$ and $T(f)\circ\delta=f$ for all $f\in\F$. 
The second part of the proof of \prettyref{prop:equivalent_def_of_linearisation_tuple} (b) shows that 
the span of $\{\delta(x)\;|\;x\in\Omega\}$ is dense in $Y$ since $T$ is bijective and $T(f)\circ\delta=f$ for all $f\in\F$. 
Let $T_{f}\in Y'$ such that $T_ {f}\circ\delta=f$ for all $f\in\F$. Then we obtain as above that $T_{f}=T(f)$ on $Y$. 
Hence condition (ii) is fulfilled as well. 
\end{proof}

Next, we give a precise characterisation of the surjectivity of the map $T$ from a triple $(\delta,Y,T)$ that is almost a linearisation.
 
\begin{prop}\label{prop:equivalent_def_of_linearisation_dense}
Let $\F$ be a linear space of $\K$-valued functions on a non-empty set $\Omega$, 
$Y$ a locally convex Hausdorff space over the field $\K$, $\delta\colon\Omega\to Y$ a map 
and $T\colon\F\to Y'$ a linear injective map
such that $T(f)\circ\delta=f$ for all $f\in\F$. Then the following assertions are equivalent.
\begin{enumerate}
\item[(a)] $T$ is surjective.
\item[(b)] The tuple $(\delta,Y)$ fulfils condition (i) of \prettyref{prop:equivalent_def_of_linearisation_tuple} 
and the span of $\{\delta(x)\;|\;x\in\Omega\}$ is dense in $Y$.
\end{enumerate}  
\end{prop}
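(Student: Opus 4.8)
The plan is to prove the two implications separately, recycling the arguments already developed in the proof of \prettyref{prop:equivalent_def_of_linearisation_tuple}, since the present statement is essentially an isolation of the two properties that, together with the standing hypotheses (linearity, injectivity, and $T(f)\circ\delta=f$ for all $f\in\F$), are jointly equivalent to surjectivity of $T$.

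For (a)$\Rightarrow$(b), I would first note that condition (i) of \prettyref{prop:equivalent_def_of_linearisation_tuple} is immediate: given $y'\in Y'$, surjectivity of $T$ yields $f\in\F$ with $T(f)=y'$, whence $y'\circ\delta=T(f)\circ\delta=f\in\F$. For the density of the linear span $S$ of $\{\delta(x)\mid x\in\Omega\}$ in $Y$, I would argue by contradiction exactly as in the second part of the proof of \prettyref{prop:equivalent_def_of_linearisation_tuple} (b): if $S$ were not dense, the bipolar theorem would provide some $u\in Y'$ with $u\neq 0$ and $u(\delta(x))=0$ for all $x\in\Omega$; surjectivity of $T$ gives $f^{u}\in\F$ with $T(f^{u})=u$, and then $f^{u}=T(f^{u})\circ\delta=u\circ\delta=0$, so by linearity and injectivity of $T$ we get $u=T(f^{u})=T(0)=0$, a contradiction.

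For (b)$\Rightarrow$(a), let $y'\in Y'$. By condition (i) the function $f\coloneqq y'\circ\delta$ belongs to $\F$, and the standing hypothesis gives $T(f)\circ\delta=f=y'\circ\delta$. Hence the continuous linear functionals $T(f)$ and $y'$ agree on $\{\delta(x)\mid x\in\Omega\}$ and therefore on its linear span $S$, which is dense in $Y$ by assumption; continuity then forces $T(f)=y'$ on all of $Y$. Since $y'$ was arbitrary, $T$ is surjective.

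I do not expect a genuine obstacle here: the whole content has already appeared in the surjectivity half of \prettyref{prop:equivalent_def_of_linearisation_tuple}. The only steps deserving a little care are the appeal to the bipolar theorem (equivalently Hahn–Banach) to produce the separating functional in the density argument, and the elementary but essential remark that a continuous linear functional on $Y$ is uniquely determined by its restriction to a dense linear subspace.
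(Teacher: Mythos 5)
Your proof is correct and follows essentially the same route as the paper: the paper's own argument simply delegates (a)$\Rightarrow$(b) to the earlier propositions (which contain exactly your bipolar-theorem density argument and the observation that condition (i) follows from surjectivity), and its (b)$\Rightarrow$(a) is word-for-word the density-plus-continuity argument you give. No gaps.
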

\begin{proof}
(a)$\Rightarrow$(b) If $T$ is surjective, then $(\delta,Y,T)$ is a linearisation of $\F$. 
The proof of the implication (b)$\Rightarrow$(a) of \prettyref{prop:equivalent_def_of_linearisation} 
shows that the span of $\{\delta(x)\;|\;x\in\Omega\}$ is dense in $Y$ and condition (i) is fulfilled.

(b)$\Rightarrow$(a) Let $y'\in Y'$. Then $y'\circ\delta\in\F$ by condition (i), $T(y'\circ\delta)\in Y'$ and
\[
T(y'\circ\delta)(\delta(x))=(y'\circ\delta)(x)=y'(\delta(x))
\]
for all $x\in\Omega$. Thus the continuous linear functionals $T(y'\circ\delta)$ and $y'$ coincide on 
a dense subspace of $Y$ and so on the whole space $Y$. Hence $T$ is surjective.
\end{proof}

\section{Existence of preduals and prebiduals}
\label{sect:existence}

In this section we study necessary and sufficient conditions that guarantee the existence of a predual. 
We recall the conditions $\operatorname{(BBC)}$ and $\operatorname{(CNC)}$ from \cite[p.~114]{bierstedt1992} and \cite[2.~Corollary, p.~115]{bierstedt1992}, which were introduced in a slightly less general form in \cite[Theorem 1, p.~320--321]{mujica1984a}. 
Further, we introduce the condition $\operatorname{(BBCl)}$, 
which is a generalisation of the compatibility condition in \cite[p.~6]{cooper1978}. 
 
\begin{defn}
Let $(X,\tau)$ be a locally convex Hausdorff space. 
\begin{enumerate}
\item[(a)] We say that $(X,\tau)$ satisfies condition $\operatorname{(BBCl)}$ 
if there exists a locally convex Hausdorff topology $\widetilde{\tau}$ on $X$ such that every $\tau$-bounded subset of $X$ 
is contained in an absolutely convex $\tau$-bounded $\widetilde{\tau}$-closed set. 
\item[(b)] We say that $(X,\tau)$ satisfies condition $\operatorname{(BBC)}$ 
if there exists a locally convex Hausdorff topology $\widetilde{\tau}$ on $X$ such that every $\tau$-bounded subset of $X$ 
is contained in an absolutely convex $\tau$-bounded $\widetilde{\tau}$-compact set. 
\item[(c)] We say that $(X,\tau)$ satisfies condition $\operatorname{(CNC)}$ if there exists a locally convex Hausdorff topology 
$\widetilde{\tau}$ on $X$ such that $\tau$ has a $0$-neighbourhood basis $\mathcal{U}_{0}$ 
of absolutely convex $\widetilde{\tau}$-closed sets. 
\end{enumerate}
If we want to emphasize the dependency on $\widetilde{\tau}$ we say that $(X,\tau)$ satisfies $\operatorname{(BBC)}$, 
$\operatorname{(BBCl)}$ resp.~$\operatorname{(CNC)}$ for $\widetilde{\tau}$. We say $(X,\tau)$ satisfies $\operatorname{(BBC)}$ 
(or $\operatorname{(BBCl)})$ and $\operatorname{(CNC)}$ for $\widetilde{\tau}$ if it satisfies both conditions for the same 
$\widetilde{\tau}$.
\end{defn}

Obviously, $\operatorname{(BBC)}$ implies $\operatorname{(BBCl)}$. Let us collect some other useful observations concerning 
these conditions. The observations (b), (c), (e) and (f) of \prettyref{rem:BBC_CNC} below are taken from 
\cite[p.~114, 116]{bierstedt1992}, (a) is clearly valid by the definition of $\operatorname{(BBCl)}$, (d) follows from (b) and 
the remarks above \cite[Chap.~3, \S9, Proposition 2, p.~231]{horvath1966}, and (f) follows from the definition of 
$\operatorname{(CNC)}$, \cite[8.2.5 Proposition, p.~148]{jarchow1981} and the Mackey--Arens theorem. 

\begin{rem}\label{rem:BBC_CNC}
Let $(X,\tau)$ be a bornological locally convex Hausdorff space and $\mathcal{B}$ be the family of $\tau$-bounded sets.
\begin{enumerate}
\item[(a)] Let $\widetilde{\tau}$ be a locally convex Hausdorff topology on $X$. Then $(X,\tau)$ satisfies $\operatorname{(BBCl)}$ for $\widetilde{\tau}$ if and only if $\mathcal{B}$ has a basis $\mathcal{B}_{0}$ of absolutely convex $\widetilde{\tau}$-closed sets. 
\item[(b)] Let $\widetilde{\tau}$ be a locally convex Hausdorff topology on $X$. Then $(X,\tau)$ satisfies $\operatorname{(BBC)}$ for $\widetilde{\tau}$ if and only if $\mathcal{B}$ has a basis $\mathcal{B}_{1}$ of absolutely convex $\widetilde{\tau}$-compact sets. 
\item[(c)] If $(X,\tau)$ satisfies $\operatorname{(BBC)}$ for some $\widetilde{\tau}$, 
then $\widetilde{\tau}\leq\tau$ and $(X,\tau)$ is ultrabornological. 
\item[(d)] If $(X,\tau)$ satisfies $\operatorname{(BBC)}$ for some $\widetilde{\tau}$, 
then $(X,\tau)$ satisfies $\operatorname{(BBC)}$ for all locally convex Hausdorff topologies $\widetilde{\tau}_{0}$ on $X$ such 
that $\widetilde{\tau}_{0}\leq \widetilde{\tau}$ since $\widetilde{\tau}_{0}$ and $\widetilde{\tau}$ coincide on all 
$B\in\mathcal{B}_{1}$.
\item[(e)] If $(X,\tau)$ satisfies $\operatorname{(BBC)}$ and $\operatorname{(CNC)}$ for some 
$\widetilde{\tau}$, then $(X,\tau)$ is quasi-complete.
\item[(f)] If $(X,\tau)$ satisfies $\operatorname{(CNC)}$ for some $\widetilde{\tau}$, 
then $(X,\tau)$ satisfies $\operatorname{(CNC)}$ for all locally convex Hausdorff topologies $\widetilde{\tau}_{0}$ on $X$ such that 
$\sigma(X,(X,\widetilde{\tau})')\leq \widetilde{\tau}_{0}\leq \mu(X,(X,\widetilde{\tau})')$.
\end{enumerate}
\end{rem}

Let $\Omega$ be a non-empty topological Hausdorff space. 
We call $\mathcal{V}$ a \emph{directed family of continuous weights} 
if $\mathcal{V}$ is a family of continuous functions $v\colon\Omega\to[0,\infty)$ such that for every $v_{1},v_{2}\in \mathcal{V}$ there are 
$C\geq 0$ and $v_{0}\in \mathcal{V}$ with $\max(v_{1},v_{2})\leq Cv_{0}$ on $\Omega$. 
We call a directed family of continuous weights $\mathcal{V}$ \emph{point-detecting} if for every $x\in\Omega$ there is $v\in \mathcal{V}$ 
such that $v(x)>0$. For an open set $\Omega\subset\R^{d}$ we denote by $\mathcal{C}^{\infty}(\Omega)$ the space of $\K$-valued infinitely continuously partially differentiable functions on $\Omega$. 
The next example is a slight generalisation of \cite[p.~34]{bonet2002} where the weighted space 
$\mathcal{HV}(\Omega)$ of holomorphic functions on an open connected set $\Omega\subset\C^{d}$ is considered and $\mathcal{V}$ is 
a point-detecting Nachbin family of continuous weights.

\begin{exa}\label{ex:borno_frechet_hypo}
Let $\Omega\subset\R^{d}$ be open, $P(\partial)$ a hypoelliptic linear partial differential operator on $\mathcal{C}^{\infty}(\Omega)$ 
and $\mathcal{V}$ a point-detecting directed family of continuous weights. We define the space
\[
\mathcal{C}_{P}\mathcal{V}(\Omega)\coloneqq\{f\in\mathcal{C}_{P}(\Omega)\;|\;\forall\;v\in \mathcal{V}:\;\|f\|_{v}\coloneqq\sup_{x\in\Omega}|f(x)|v(x)<\infty\},
\] 
where $\mathcal{C}_{P}(\Omega)\coloneqq\{f\in\mathcal{C}^{\infty}(\Omega)\;|\;f\in\ker{P(\partial)}\}$, and equip 
$\mathcal{C}_{P}\mathcal{V}(\Omega)$ with the locally convex Hausdorff topology $\tau_{\mathcal{V}}$ induced by the seminorms $(\|\cdot\|_{v})_{v\in \mathcal{V}}$. 
The space $(\mathcal{C}_{P}\mathcal{V}(\Omega),\tau_{\mathcal{V}})$ is complete and 
$\tau_{\operatorname{co}}\coloneqq{\tau_{\operatorname{co}}}_{\mid\mathcal{C}_{P}\mathcal{V}(\Omega)}\leq \tau_{\mathcal{V}}$. Further, 
the absolutely convex sets 
\[
U_{v}\coloneqq \{f\in\mathcal{C}_{P}\mathcal{V}(\Omega)\;|\;\|f\|_{v}\leq 1\},\quad v\in \mathcal{V},
\]
are $\tau_{\operatorname{co}}$-closed and form a $0$-neighbourhood basis of $\tau_{\mathcal{V}}$ (cf.~\cite[p.~34]{bonet2002} 
where $\mathcal{V}$ is a Nachbin family). 
Therefore $(\mathcal{C}_{P}\mathcal{V}(\Omega),\tau_{\mathcal{V}})$ satisfies $\operatorname{(CNC)}$ for $\tau_{\operatorname{co}}$.
If follows similarly to \cite[p.~123]{bierstedt1992} or the proof of \cite[Proposition 1.2 (c), p.~274--275]{bierstedt1993} 
that every $\tau_{\mathcal{V}}$-bounded set $B$ is contained in an absolutely convex 
$\tau_{\operatorname{co}}$-closed set $B_{1}$. Since $(\mathcal{C}_{P}(\Omega),\tau_{\operatorname{co}})$ is a 
Fr\'echet--Schwartz space (see e.g.~\cite[p.~690]{frerick2009}), the set $B_{1}$ is also $\tau_{\operatorname{co}}$-compact. 
Therefore $(\mathcal{C}_{P}\mathcal{V}(\Omega),\tau_{\mathcal{V}})$ satisfies $\operatorname{(BBC)}$ for $\tau_{\operatorname{co}}$. 
If $\mathcal{V}=(v_{n})_{n\in\N}$ is countable and \emph{increasing}, i.e.~$v_{n}\leq v_{n+1}$ for all $n\in\N$, then 
$(\mathcal{C}_{P}\mathcal{V}(\Omega),\tau_{\mathcal{V}})$ is a Fr\'echet space, in particular bornological. If $\mathcal{V}=\{v\}$, 
we set $\mathcal{C}_{P}v(\Omega)\coloneqq\mathcal{C}_{P}\mathcal{V}(\Omega)$ and note that $(\mathcal{C}_{P}v(\Omega),\|\cdot\|_{v})$ 
is a Banach space.
\end{exa}

\begin{prop}\label{prop:BBC_CNC_normable}
Let $(X,\tau)$ be a normable locally convex Hausdorff space. Then the following assertions are equivalent.
\begin{enumerate}
\item[(a)] $(X,\tau)$ satisfies $\operatorname{(BBC)}$ and $\operatorname{(CNC)}$ for some 
$\widetilde{\tau}$. 
\item[(b)] $(X,\tau)$ is quasi-complete and satisfies $\operatorname{(BBC)}$ for some $\widetilde{\tau}$.
\item[(c)] There are a norm $\vertiii{\cdot}$ on $X$ which induces $\tau$, and a locally convex Hausdorff topology $\widetilde{\tau}$ such that $B_{\vertiii{\cdot}}$ $\widetilde{\tau}$-compact.
\end{enumerate}
\end{prop}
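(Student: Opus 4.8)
The plan is to prove the cycle $(a)\Rightarrow(b)\Rightarrow(c)\Rightarrow(a)$, using throughout that a normable space is metrizable, hence bornological, so that \prettyref{rem:BBC_CNC} applies to $(X,\tau)$. The implication $(a)\Rightarrow(b)$ is then immediate: if $(X,\tau)$ satisfies $\operatorname{(BBC)}$ and $\operatorname{(CNC)}$ for some $\widetilde{\tau}$, it is quasi-complete by \prettyref{rem:BBC_CNC}(e), and it satisfies $\operatorname{(BBC)}$ by hypothesis.

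The heart of the argument is $(b)\Rightarrow(c)$. Fix a norm $\|\cdot\|$ inducing $\tau$ and a topology $\widetilde{\tau}$ witnessing $\operatorname{(BBC)}$. By \prettyref{rem:BBC_CNC}(b) the $\tau$-bounded sets have a basis of absolutely convex $\widetilde{\tau}$-compact sets, so I pick such a set $B_{1}$ with $B_{\|\cdot\|}\subset B_{1}$. Since $B_{1}$ is $\tau$-bounded there is $C>0$ with $B_{1}\subset C\,B_{\|\cdot\|}$, hence $B_{\|\cdot\|}\subset B_{1}\subset C\,B_{\|\cdot\|}$. Let $\vertiii{\cdot}$ be the Minkowski functional of $B_{1}$; from these inclusions one reads off $\tfrac{1}{C}\|\cdot\|\leq\vertiii{\cdot}\leq\|\cdot\|$, so $\vertiii{\cdot}$ is a norm inducing $\tau$. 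It remains to identify its closed unit ball: one checks $B_{\vertiii{\cdot}}=\bigcap_{\lambda>1}\lambda B_{1}$, and since $B_{1}$ is $\widetilde{\tau}$-compact in a Hausdorff topology it is $\widetilde{\tau}$-closed, so each $\lambda B_{1}$ is $\widetilde{\tau}$-closed; as $\lambda^{-1}x\to x$ in $\widetilde{\tau}$ when $\lambda\downarrow 1$, this intersection collapses to $B_{1}$. Thus $B_{\vertiii{\cdot}}=B_{1}$ is $\widetilde{\tau}$-compact, which is precisely $(c)$ (for the same $\widetilde{\tau}$).

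For $(c)\Rightarrow(a)$, take the norm $\vertiii{\cdot}$ and the topology $\widetilde{\tau}$ from $(c)$. Every $\tau$-bounded set is contained in some $nB_{\vertiii{\cdot}}$, which is absolutely convex, $\tau$-bounded, and $\widetilde{\tau}$-compact (the image of the $\widetilde{\tau}$-compact set $B_{\vertiii{\cdot}}$ under the $\widetilde{\tau}$-continuous map $x\mapsto nx$); hence $\operatorname{(BBC)}$ holds for $\widetilde{\tau}$. Moreover $\{\tfrac1n B_{\vertiii{\cdot}}\mid n\in\N\}$ is a $0$-neighbourhood basis of $\tau$ consisting of absolutely convex sets that are $\widetilde{\tau}$-compact, hence $\widetilde{\tau}$-closed ($\widetilde{\tau}$ being Hausdorff); so $\operatorname{(CNC)}$ holds for the same $\widetilde{\tau}$. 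This gives $(a)$ and closes the cycle.

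The only step that is not purely formal is the identification $B_{\vertiii{\cdot}}=B_{1}$ in $(b)\Rightarrow(c)$: the hypothesis $\operatorname{(BBC)}$ merely supplies \emph{some} $\widetilde{\tau}$-compact enlargement $B_{1}$ of $B_{\|\cdot\|}$, whereas $(c)$ demands a norm whose \emph{own} unit ball is $\widetilde{\tau}$-compact, and bridging this gap is exactly where the $\widetilde{\tau}$-closedness of $B_{1}$ (forced by Hausdorffness of $\widetilde{\tau}$) together with continuity of scalar multiplication is used. Everything else reduces to standard facts about Minkowski functionals and to citing \prettyref{rem:BBC_CNC}.
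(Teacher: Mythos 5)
Your proof is correct, and it follows the same overall architecture as the paper: the cycle (a)$\Rightarrow$(b)$\Rightarrow$(c)$\Rightarrow$(a), with (a)$\Rightarrow$(b) via \prettyref{rem:BBC_CNC} (e), and (c)$\Rightarrow$(a) by taking the homothets of $B_{\vertiii{\cdot}}$ as both the bounded basis and the $0$-neighbourhood basis. The one place where you genuinely diverge is the core of (b)$\Rightarrow$(c). The paper only extracts the inequality $\|\cdot\|\leq C\vertiii{\cdot}$ from the boundedness of the compact enlargement $B$, and then obtains the reverse estimate by showing that $B$ is a Banach disk (using quasi-completeness of $(X,\tau)$) and invoking the open mapping theorem for $\id\colon(X,\vertiii{\cdot})\to(X,\|\cdot\|)$; finally it identifies $B_{\vertiii{\cdot}}=B$ via $\vertiii{\cdot}$-closedness of $B$ and \cite[Remark 6.6 (b), p.~47]{meisevogt1997}. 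You instead read off both inequalities $\tfrac1C\|\cdot\|\leq\vertiii{\cdot}\leq\|\cdot\|$ directly from the sandwich $B_{\|\cdot\|}\subset B_{1}\subset CB_{\|\cdot\|}$, and identify $B_{\vertiii{\cdot}}=\bigcap_{\lambda>1}\lambda B_{1}=B_{1}$ using only that $B_{1}$ is $\widetilde{\tau}$-closed (compact in a Hausdorff topology) and that scalar multiplication is $\widetilde{\tau}$-continuous. This is more elementary --- no Banach disks, no open mapping theorem --- and it has the side benefit of showing that the quasi-completeness hypothesis in (b) is not actually used in the implication (b)$\Rightarrow$(c): for a normable space, $\operatorname{(BBC)}$ alone already yields (c), and hence (via (c)$\Rightarrow$(a)$\Rightarrow$(b)) quasi-completeness. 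What the paper's heavier argument buys becomes visible only in the companion statement \prettyref{prop:BBCl_CNC_normable}, where the enlargement is merely $\widetilde{\tau}$-closed rather than compact and one really does need completeness and the Banach-disk machinery to conclude that $(X,\vertiii{\cdot})$ is complete; your shortcut would not carry over there.
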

\begin{proof}
First, we note that normable spaces are bornological by \cite[Proposition 24.10, p.~282]{meisevogt1997}.

(a)$\Rightarrow$(b) Using \prettyref{rem:BBC_CNC} (e), this implication is obvious. 

(b)$\Rightarrow$(c) Since $(X,\tau)$ is quasi-complete and normable, there is a norm $\|\cdot\|$ on $X$ such that the identity map 
$\operatorname{id}\colon (X,\tau)\to (X,\|\cdot\|)$ is a topological isomorphism and $(X,\|\cdot\|)$ quasi-complete, thus complete. Hence a subset of $X$ is $\tau$-bounded if and only if it is $\|\cdot\|$-bounded. Since $(X,\tau)$ satisfies $\operatorname{(BBC)}$ for some $\widetilde{\tau}$, there is an absolutely convex $\|\cdot\|$-bounded $\widetilde{\tau}$-compact subset $B$ of $X$ such that $B_{\|\cdot\|}\subset B$. This implies that the Minkowski functional (or gauge) of $B$ given by
\[
\vertiii{x}\coloneqq\inf\{t>0\;|\;x\in tB\},\quad x\in X,
\]
defines a norm on $X$ such that there is $C\geq 0$ with $\|x\|\leq C\vertiii{x}$ for all $x\in X$ 
by \cite[p.~151]{jarchow1981}. Due to the $\widetilde{\tau}$-compactness of $B$ 
and \prettyref{rem:BBC_CNC} (c) the set $B$ is $\tau$-closed and thus $\|\cdot\|$-closed. Further, the completeness of $(X,\|\cdot\|)$ yields that $B$ is sequentially $\|\cdot\|$-complete and hence a Banach disk by \cite[Corollary 23.14, p.~268]{meisevogt1997}. Therefore $(X,\vertiii{\cdot})$ is complete and 
$\id\colon (X,\vertiii{\cdot})\to (X,\|\cdot\|)$ a topological isomorphism by \cite[Open mapping theorem 24.30, p.~289]{meisevogt1997}. Since $B$ is $\|\cdot\|$-closed, it is also $\vertiii{\cdot}$-closed 
and hence $B_{\vertiii{\cdot}}=B$ by \cite[Remark 6.6 (b), p.~47]{meisevogt1997}. 
We conclude that $B_{\vertiii{\cdot}}$ is $\widetilde{\tau}$-compact.

(c)$\Rightarrow$(a) We set $\mathcal{B}_{1}\coloneqq\mathcal{U}_{0}\coloneqq\{tB_{\vertiii{\cdot}}\;|\;t>0\}$. Then $\mathcal{B}_{1}$ is a basis of $\tau$-bounded sets which are absolutely convex and $\widetilde{\tau}$-compact, implying that $(X,\tau)$ satisfies $\operatorname{(BBC)}$ for $\widetilde{\tau}$ by \prettyref{rem:BBC_CNC} (b). 
$\mathcal{B}_{1}=\mathcal{U}_{0}$ is also a $0$-neighbourhood basis w.r.t.~$\tau$
of absolutely convex $\widetilde{\tau}$-closed sets, yielding that $(X,\tau)$ satisfies $\operatorname{(CNC)}$ for $\widetilde{\tau}$.
\end{proof}

Similarly, we get by analysing the proof above a corresponding result with $\operatorname{(BBCl)}$ instead of $\operatorname{(BBC)}$.

\begin{prop}\label{prop:BBCl_CNC_normable}
Let $(X,\tau)$ be a completely normable locally convex Hausdorff space. Then the following assertions are equivalent.
\begin{enumerate}
\item[(a)] $(X,\tau)$ satisfies $\operatorname{(BBCl)}$ and $\operatorname{(CNC)}$ for some 
$\widetilde{\tau}\leq\tau$. 
\item[(b)] $(X,\tau)$ is complete and satisfies $\operatorname{(BBCl)}$ for some $\widetilde{\tau}\leq\tau$.
\item[(c)] There are a norm $\vertiii{\cdot}$ on $X$ which induces $\tau$, and a locally convex Hausdorff topology $\widetilde{\tau}\leq\tau$ such that $(X,\vertiii{\cdot})$ is complete and $B_{\vertiii{\cdot}}$ $\widetilde{\tau}$-closed.
\end{enumerate}
\end{prop}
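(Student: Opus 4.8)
The plan is to mimic the proof of \prettyref{prop:BBC_CNC_normable} almost verbatim, tracking where compactness was genuinely used versus where mere closedness suffices. The implication (a)$\Rightarrow$(b) is again immediate from \prettyref{rem:BBC_CNC} (e), since $\operatorname{(BBCl)}$ together with $\operatorname{(CNC)}$ for the same $\widetilde{\tau}$ still yields quasi-completeness by the cited remark (the proof there does not use the compactness in $\operatorname{(BBC)}$, only that bounded sets sit inside absolutely convex $\widetilde{\tau}$-closed bounded sets and that $0$-neighbourhoods can be taken $\widetilde{\tau}$-closed); combined with complete normability this gives completeness. One subtlety: \prettyref{rem:BBC_CNC} (e) is stated for $\operatorname{(BBC)}$, so I would either check directly that its proof goes through with $\operatorname{(BBCl)}$, or — cleaner — argue completeness here by hand, exactly as in the (b)$\Rightarrow$(c) step below, observing that the Minkowski gauge of the absolutely convex $\widetilde{\tau}$-closed bounded set containing $B_{\|\cdot\|}$ furnishes an equivalent norm whose unit ball is $\widetilde{\tau}$-closed, hence $\tau$-closed (as $\widetilde{\tau}\le\tau$), hence complete by complete normability of $(X,\tau)$.

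For (b)$\Rightarrow$(c): since $(X,\tau)$ is completely normable, fix a norm $\|\cdot\|$ inducing $\tau$ with $(X,\|\cdot\|)$ complete; then $\tau$-bounded $=$ $\|\cdot\|$-bounded. By $\operatorname{(BBCl)}$ for some $\widetilde{\tau}\le\tau$ there is an absolutely convex $\|\cdot\|$-bounded $\widetilde{\tau}$-closed set $B$ with $B_{\|\cdot\|}\subset B$. Its Minkowski functional $\vertiii{\cdot}$ is a norm with $\|x\|\le C\vertiii{x}$ for some $C\ge 0$ (by \cite[p.~151]{jarchow1981}), and since $\vertiii{\cdot}\le\|\cdot\|$ trivially (as $B_{\|\cdot\|}\subset B$), the two norms are equivalent, so $\vertiii{\cdot}$ induces $\tau$ and $(X,\vertiii{\cdot})$ is complete. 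Because $\widetilde{\tau}\le\tau$, the $\widetilde{\tau}$-closed set $B$ is $\tau$-closed, hence $\vertiii{\cdot}$-closed, and $B_{\vertiii{\cdot}}=B$ by \cite[Remark 6.6 (b), p.~47]{meisevogt1997}; thus $B_{\vertiii{\cdot}}$ is $\widetilde{\tau}$-closed. (Here the argument is actually shorter than in \prettyref{prop:BBC_CNC_normable}, since I no longer need the Banach-disk detour — completeness of $(X,\vertiii{\cdot})$ comes for free from norm equivalence and completeness of $(X,\|\cdot\|)$.)

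For (c)$\Rightarrow$(a): set $\mathcal{B}_1\coloneqq\mathcal{U}_0\coloneqq\{tB_{\vertiii{\cdot}}\mid t>0\}$. Since $\vertiii{\cdot}$ induces $\tau$, this is simultaneously a basis of the absolutely convex $\tau$-bounded sets and a $0$-neighbourhood basis for $\tau$; each member is $\widetilde{\tau}$-closed by hypothesis (scalar multiples of a $\widetilde{\tau}$-closed absolutely convex set are $\widetilde{\tau}$-closed). By \prettyref{rem:BBC_CNC} (a) this gives $\operatorname{(BBCl)}$ for $\widetilde{\tau}$, and directly from the definition it gives $\operatorname{(CNC)}$ for $\widetilde{\tau}$; and $\widetilde{\tau}\le\tau$ by hypothesis. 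The only mild obstacle is the bookkeeping in (a)$\Rightarrow$(b) around whether \prettyref{rem:BBC_CNC} (e) applies verbatim with $\operatorname{(BBCl)}$ in place of $\operatorname{(BBC)}$; I would resolve it by the self-contained gauge argument sketched above rather than by appealing to (e).
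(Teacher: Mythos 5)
Your proof is correct and takes exactly the route the paper intends: the paper gives no separate proof of \prettyref{prop:BBCl_CNC_normable}, saying only that it follows ``by analysing the proof'' of \prettyref{prop:BBC_CNC_normable}, and your adaptation does precisely that; moreover your observation that $B_{\|\cdot\|}\subset B$ already yields $\vertiii{\cdot}\leq\|\cdot\|$, hence norm equivalence and completeness of $(X,\vertiii{\cdot})$ directly, is a valid shortcut past the Banach-disk/open-mapping detour of the original proof. One caveat: your parenthetical claim that the proof of \prettyref{rem:BBC_CNC} (e) does not use the compactness in $\operatorname{(BBC)}$ is false --- quasi-completeness there is obtained by extracting $\widetilde{\tau}$-convergent subnets of bounded Cauchy nets inside the absolutely convex bounded $\widetilde{\tau}$-compact sets, and $\operatorname{(BBCl)}$ together with $\operatorname{(CNC)}$ alone do not imply quasi-completeness (take the space of finitely supported sequences with the supremum norm and $\widetilde{\tau}=\tau$). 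This slip is harmless here, however, because completeness of $(X,\tau)$ is already part of the standing hypothesis that $(X,\tau)$ is completely normable, so (a)$\Rightarrow$(b) is immediate without any completeness argument at all.
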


Now, we recall the candidate for a predual from e.g.~\cite{bierstedt1992,mujica1984a,ng1971}. 
Let $(X,\tau)$ be a locally convex Hausdorff space, $\mathcal{B}$ the family of $\tau$-bounded sets and $\widetilde{\tau}$ 
another locally convex Hausdorff topology on $X$. We denote by $X^{\star}$ the algebraic dual space of $X$ and define 
\[
 X_{\mathcal{B},\widetilde{\tau}}'
\coloneqq\{x^{\star}\in X^{\star}\;|\;x^{\star}_{\mid B}\text{ is }\widetilde{\tau}\text{-continuous for all }B\in\mathcal{B}\}
\]
and observe that $(X,\widetilde{\tau})'\subset X_{\mathcal{B},\widetilde{\tau}}'$ as linear spaces. 
We equip $X_{\mathcal{B},\widetilde{\tau}}'$ with the topology 
$\beta\coloneqq\beta_{\mathcal{B},\widetilde{\tau}}\coloneqq\beta(X_{\mathcal{B},\widetilde{\tau}}',(X,\tau))$ 
of uniform convergence on the $\tau$-bounded subsets of $X$. 
If $(X,\tau)$ is bornological and satisfies $\operatorname{(BBC)}$ for $\widetilde{\tau}$, then we have $\widetilde{\tau}\leq\tau$ 
by \prettyref{rem:BBC_CNC} (c), 
\[
X_{\mathcal{B},\widetilde{\tau}}'=\{x^{\star}\in X^{\star}\;|\;x^{\star}_{\mid B}\text{ is }\widetilde{\tau}\text{-continuous for all }B\in\mathcal{B}_{1}\}
\]
with $\mathcal{B}_{1}$ from \prettyref{rem:BBC_CNC} (b), $X_{\mathcal{B},\widetilde{\tau}}' \subset (X,\tau)'$ and 
\[
 \beta
=\beta_{\mathcal{B},\widetilde{\tau}}
=\beta(X_{\mathcal{B},\widetilde{\tau}}',(X,\tau))
=\beta((X,\tau)',(X,\tau))_{\mid X_{\mathcal{B},\widetilde{\tau}}'}
=\widetilde{\beta}_{\mathcal{B}_{1},\widetilde{\tau}}
\]
by \cite[p.~115]{bierstedt1992} where $\widetilde{\beta}_{\mathcal{B}_{1},\widetilde{\tau}}$ 
denotes the topology on $X_{\mathcal{B},\widetilde{\tau}}'$ of uniform convergence on the sets $B\in\mathcal{B}_{1}$.

\begin{rem}\label{rem:predual_independent}
Let $(X,\tau)$ be a bornological locally convex Hausdorff space satisfying $\operatorname{(BBC)}$ for some $\widetilde{\tau}$ 
and $\mathcal{B}$ the family of $\tau$-bounded sets. If $\widetilde{\tau}_{0}$ is a locally convex Hausdorff topology on $X$ 
such that $\widetilde{\tau}_{0}\leq \widetilde{\tau}$, then $X_{\mathcal{B},\widetilde{\tau}}'=X_{\mathcal{B},\widetilde{\tau}_{0}}'$ 
and $\beta_{\mathcal{B},\widetilde{\tau}}=\beta_{\mathcal{B},\widetilde{\tau}_{0}}$. 
Indeed, this observation follows directly from the considerations above and \prettyref{rem:BBC_CNC} (d). 
\end{rem}

\begin{prop}\label{prop:predual_complete}
Let $(X,\tau)$ be a bornological locally convex Hausdorff space satisfying $\operatorname{(BBC)}$ for some $\widetilde{\tau}$ 
and $\mathcal{B}$ the family of $\tau$-bounded sets. Then the following assertions hold.
\begin{enumerate}
\item[(a)] $X_{\mathcal{B},\widetilde{\tau}}'$ is a closed subspace of the complete space $(X,\tau)_{b}'$. 
In particular, $(X_{\mathcal{B},\widetilde{\tau}}',\beta)$ is complete. 
\item[(b)] If $(X,\tau)$ is a DF-space, then $(X_{\mathcal{B},\widetilde{\tau}}',\beta)$ is 
a Fr\'echet space. 
\item[(c)] If $(X,\tau)$ is normable, then $(X_{\mathcal{B},\widetilde{\tau}}',\beta)$ is 
completely normable. 
\end{enumerate}
\end{prop}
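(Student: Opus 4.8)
The plan is to prove the three assertions in turn, building on the observation recorded before the statement, namely that under the hypotheses ($(X,\tau)$ bornological with $\operatorname{(BBC)}$ for $\widetilde{\tau}$) one has $\widetilde{\tau}\leq\tau$, the identity $X_{\mathcal{B},\widetilde{\tau}}'\subset (X,\tau)'$ as linear spaces, and the crucial coincidence of topologies $\beta=\beta((X,\tau)',(X,\tau))_{\mid X_{\mathcal{B},\widetilde{\tau}}'}$. The latter says that $(X_{\mathcal{B},\widetilde{\tau}}',\beta)$ is precisely the subspace $X_{\mathcal{B},\widetilde{\tau}}'$ of $(X,\tau)_b'$ with its induced topology, which is the structural fact that drives all three parts.

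For part (a), first I would recall that the strong dual $(X,\tau)_b'$ of a bornological space is complete — this is a standard fact (e.g.\ \cite[13.2.2 Corollary, p.~258]{jarchow1981} or the fact that the strong dual of a bornological space is complete since bounded sets are exactly the bornivorous-absorbed ones; more directly, $(X,\tau)_b'$ is a closed subspace of $\K^X$ with the topology of bounded convergence and bornologicality makes it complete). Then the only real work is to show $X_{\mathcal{B},\widetilde{\tau}}'$ is $\beta$-closed in $(X,\tau)_b'$. Take a net $(x_\alpha')$ in $X_{\mathcal{B},\widetilde{\tau}}'$ converging in $(X,\tau)_b'$ to some $x'\in (X,\tau)'$; convergence in $\beta$ means uniform convergence on each $B\in\mathcal{B}$, in particular on each $B\in\mathcal{B}_1$ (the basis of absolutely convex $\widetilde{\tau}$-compact sets from \prettyref{rem:BBC_CNC}(b)). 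Since each $x_\alpha'$ restricted to $B$ is $\widetilde{\tau}$-continuous and the restrictions converge uniformly on $B$, the limit $x'_{\mid B}$ is $\widetilde{\tau}$-continuous on $B$ (uniform limits of continuous functions on a fixed topological space are continuous). As every $\tau$-bounded set sits inside some $B\in\mathcal{B}_1$, we get $x'\in X_{\mathcal{B},\widetilde{\tau}}'$, so the subspace is closed, hence complete. I expect the main (mild) obstacle here to be citing the completeness of $(X,\tau)_b'$ cleanly and making sure the "uniform limit of continuous functions on $B$" argument is phrased for nets, not just sequences — both are routine but deserve a precise reference.

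For parts (b) and (c) the idea is that the topology $\beta$ is generated by the polars $B^{\circ}$, $B\in\mathcal{B}$, and by the reduction above we may take $B$ ranging over the basis $\mathcal{B}_1$; if $(X,\tau)$ is a DF-space it has a fundamental sequence of bounded sets $(B_n)_n$, so $(X_{\mathcal{B},\widetilde{\tau}}',\beta)$ is metrisable (countable $0$-neighbourhood basis $\{B_n^{\circ}\}$), and being also complete by (a), it is a Fréchet space. If $(X,\tau)$ is normable, its bounded sets have a single generator (a multiple of the closed unit ball), so $\beta$ is a norm topology, and again completeness from (a) upgrades this to completely normable; alternatively one notes $X_{\mathcal{B},\widetilde{\tau}}'$ is a closed subspace of the Banach space $(X,\tau)_b'$. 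Here the only thing to be careful about is that for a DF-space a fundamental sequence of bounded sets can be chosen absolutely convex (which is standard), so that it fits the description of $\beta$ via $\mathcal{B}_1$; and for the normable case that $(X,\tau)_b'$ is genuinely a Banach space (the dual of a normed space with the dual norm, which coincides with $\beta$). I anticipate essentially no obstacle in (b) and (c) beyond invoking these standard structural facts about DF- and normed spaces together with part (a).
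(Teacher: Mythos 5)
Your proposal is correct and follows essentially the same route as the paper: part (a) is proved by showing $X_{\mathcal{B},\widetilde{\tau}}'$ is a $\beta$-closed subspace of the complete strong dual of the bornological space $(X,\tau)$ (the paper simply cites the proof of Mujica's theorem in Bierstedt--Bonet for exactly the closedness argument you spell out via uniform limits on the sets of $\mathcal{B}_{1}$), and parts (b) and (c) follow by combining (a) with the standard facts that the strong dual of a DF-space is Fr\'echet and that of a normable space is completely normable. The only cosmetic difference is that you optionally rederive metrisability of $\beta$ from a fundamental sequence of bounded sets rather than passing directly to a closed subspace of the Fr\'echet space $(X,\tau)_{b}'$; both are valid and equivalent here.
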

\begin{proof}
(a) This follows from the proof of \cite[1.~Theorem (Mujica), p.~115]{bierstedt1992}.

(b) Since $(X,\tau)$ is a DF-space, its strong dual $(X,\tau)_{b}'$ is a Fr\'echet space by \cite[12.4.2 Theorem, p.~258]{jarchow1981}. It follows from part (a) that $X_{\mathcal{B},\widetilde{\tau}}'$ is a closed subspace of 
$(X,\tau)_{b}'$ and thus $(X_{\mathcal{B},\widetilde{\tau}}',\beta)$ a Fr\'echet space as well. 

(c) Since $(X,\tau)$ is a normable space, its strong dual $(X,\tau)_{b}'$ is a completely normable space. The rest follows as in (b).
\end{proof}

The conditions $\operatorname{(BBC)}$ and $\operatorname{(CNC)}$ for some 
$\widetilde{\tau}$ guarantee that $X_{\mathcal{B},\widetilde{\tau}}'$ (equipped with a suitable topological isomorphism) 
is a complete barrelled predual of a bornological locally convex Hausdorff space $(X,\tau)$.

\begin{thm}[{\cite[1.~Theorem (Mujica), 2.~Corollary, p.~115]{bierstedt1992}}]\label{thm:general_dixmier_ng}
Let $(X,\tau)$ be a bornological locally convex Hausdorff space satisfying $\operatorname{(BBC)}$ and $\operatorname{(CNC)}$ for some 
$\widetilde{\tau}$ and $\mathcal{B}$ the family of $\tau$-bounded sets. 
Then $(X_{\mathcal{B},\widetilde{\tau}}',\beta)$ is a complete barrelled locally convex Hausdorff space 
and the evaluation map 
\[
\mathcal{I}\colon(X,\tau)\to (X_{\mathcal{B},\widetilde{\tau}}',\beta)_{b}',\; x\longmapsto [x'\mapsto x'(x)], 
\]
is a topological isomorphism. In particular, $(X,\tau)$ is a dual space with complete barrelled predual 
$(X_{\mathcal{B},\widetilde{\tau}}',\mathcal{I})$.
\end{thm}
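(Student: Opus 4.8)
The plan is to show that $Z:=X_{\mathcal{B},\widetilde{\tau}}'$ with the topology $\beta$ is complete, Hausdorff and barrelled and that $\mathcal{I}$ is a topological isomorphism onto $(Z,\beta)_{b}'$. Completeness is already \prettyref{prop:predual_complete}~(a), and $(Z,\beta)$ is Hausdorff because $(X,\widetilde{\tau})'\subset Z$ separates the points of $X$; so two tasks remain, the isomorphism statement and barrelledness. Throughout I would work with the dual pairing $\langle X,Z\rangle$, $(x,x^{\star})\mapsto x^{\star}(x)$, and freely use the facts recalled before \prettyref{rem:predual_independent}, namely $\widetilde{\tau}\leq\tau$, $Z\subset(X,\tau)'$, $\beta=\beta((X,\tau)',(X,\tau))_{\mid Z}$ and that $\mathcal{B}$ has a basis $\mathcal{B}_{1}$ of absolutely convex $\widetilde{\tau}$-compact sets, together with the observation that an absolutely convex $\widetilde{\tau}$-closed subset $U$ of $X$ is $\sigma(X,(X,\widetilde{\tau})')$-closed, hence $\sigma(X,Z)$-closed since $(X,\widetilde{\tau})'\subset Z$, so that $U=U^{\circ\circ}$ in $\langle X,Z\rangle$.

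First I would prove that $\mathcal{I}$ maps $X$ bijectively onto $(Z,\beta)'$. Well-definedness and injectivity are immediate (singletons are $\tau$-bounded; $(X,\widetilde{\tau})'$ separates points). Surjectivity is the Ng-type core of the argument: given $\xi\in(Z,\beta)'$ there are $B\in\mathcal{B}_{1}$ and $C>0$ with $|\xi|\leq C$ on the polar $B^{\circ}\subset Z$ of $B$, so $\xi$ lies in the bipolar of $\mathcal{I}(B)$ computed in the pairing $\langle Z^{\star},Z\rangle$, i.e.\ in $C\cdot\overline{\mathcal{I}(B)}^{\sigma(Z^{\star},Z)}$. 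The key point is that $\mathcal{I}_{\mid B}\colon(B,\widetilde{\tau})\to(Z^{\star},\sigma(Z^{\star},Z))$ is continuous, which is precisely the condition ``$x^{\star}_{\mid B}$ is $\widetilde{\tau}$-continuous for every $x^{\star}\in Z$'' built into the definition of $Z$; since $B$ is $\widetilde{\tau}$-compact, $\mathcal{I}(B)$ is then $\sigma(Z^{\star},Z)$-compact, hence closed and absolutely convex, so $\xi\in C\cdot\mathcal{I}(B)=\mathcal{I}(C\cdot B)\subset\mathcal{I}(X)$. This argument also shows in passing that the equicontinuous subsets of $(Z,\beta)'$ are exactly the subsets of the sets $\mathcal{I}(B)$, $B\in\mathcal{B}_{1}$, that is, the images under $\mathcal{I}$ of the $\tau$-bounded subsets of $X$.

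Next I would verify that $\mathcal{I}\colon(X,\tau)\to(Z,\beta)_{b}'$ is bicontinuous. For continuity: $(X,\tau)$ bornological implies quasi-barrelled, so every $\beta$-bounded $D\subset Z$ is equicontinuous on $(X,\tau)$, hence $D\subset V^{\circ}$ for some absolutely convex $\tau$-$0$-neighbourhood $V$, and then $\mathcal{I}^{-1}(D^{\circ})\supset V^{\circ\circ}\supset V$. For openness I would use $\operatorname{(CNC)}$: fix a $\tau$-$0$-neighbourhood basis $\mathcal{U}_{0}$ of absolutely convex $\widetilde{\tau}$-closed sets; for $U\in\mathcal{U}_{0}$ the polar $U^{\circ}\subset Z$ is $\beta$-bounded, since $\sup_{x^{\star}\in U^{\circ}}\sup_{x\in B}|x^{\star}(x)|=\sup_{x\in B}p_{U}(x)<\infty$ for every $B\in\mathcal{B}_{1}$, and from $U=U^{\circ\circ}$ in $\langle X,Z\rangle$ one gets $\mathcal{I}(U)=(U^{\circ})^{\circ}$, a strong $0$-neighbourhood in $(Z,\beta)_{b}'$. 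Hence $\mathcal{I}$ is a topological isomorphism, and $(Z,\mathcal{I})$ is a predual of $(X,\tau)$.

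The remaining and, I expect, most delicate step is barrelledness of $(Z,\beta)$. It suffices that every $\sigma((Z,\beta)',Z)$-bounded subset of $(Z,\beta)'$ be equicontinuous, which by the identification of equicontinuous sets above together with the isomorphism $\mathcal{I}$ amounts to: every $\sigma(X,Z)$-bounded $A\subset X$ is $\tau$-bounded. Fixing $U\in\mathcal{U}_{0}$, its polar $U^{\circ}\subset Z$ is absolutely convex, $\beta$-bounded (as above) and $\beta$-closed (being $\sigma(Z,(Z,\beta)')$-closed), hence complete, in particular sequentially complete, in the complete space $(Z,\beta)$, hence a Banach disk by \cite[Corollary 23.14, p.~268]{meisevogt1997}; so $\operatorname{span}(U^{\circ})$ with the gauge of $U^{\circ}$ is a Banach space. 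Since $\sup_{a\in A}|x^{\star}(a)|<\infty$ for each $x^{\star}\in U^{\circ}$, the family $\{\mathcal{I}(a)\colon a\in A\}$ is pointwise bounded on this Banach space, so Banach--Steinhaus (equivalently, a pointwise bounded family of linear functionals on a Banach disk is uniformly bounded on it) yields $M_{U}:=\sup_{a\in A}\sup_{x^{\star}\in U^{\circ}}|x^{\star}(a)|<\infty$, i.e.\ $A\subset M_{U}\cdot U$ because $U=U^{\circ\circ}$ in $\langle X,Z\rangle$. Letting $U$ run through the basis $\mathcal{U}_{0}$ shows $A$ is $\tau$-bounded, which finishes the proof. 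The two steps I anticipate being genuinely substantial are this one and the compactness step in the surjectivity argument; the rest is a careful bookkeeping of polars.
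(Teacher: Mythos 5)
Your proof is correct. The paper itself does not prove this theorem -- it is quoted from \cite[1.~Theorem (Mujica), 2.~Corollary, p.~115]{bierstedt1992} -- and your argument is essentially a faithful reconstruction of the standard Mujica/Ng proof given there: the $\widetilde{\tau}$-compactness of the sets in $\mathcal{B}_{1}$ plus the bipolar theorem for surjectivity of $\mathcal{I}$, quasi-barrelledness of the bornological space $(X,\tau)$ for continuity, $\operatorname{(CNC)}$ and $U=U^{\circ\circ}$ for openness, and the Banach-disk/uniform-boundedness argument on the polars $U^{\circ}$ (which are complete bounded disks in the complete space $(X_{\mathcal{B},\widetilde{\tau}}',\beta)$) for barrelledness. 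One cosmetic slip: the Hausdorffness of $(X_{\mathcal{B},\widetilde{\tau}}',\beta)$ does not follow from $(X,\widetilde{\tau})'$ separating the points of $X$ (that gives injectivity of $\mathcal{I}$); it follows simply because singletons are $\tau$-bounded, so the seminorms $p_{B}$ already separate the points of $X_{\mathcal{B},\widetilde{\tau}}'$.
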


Next, we show that under suitable assumptions $(X,\tau)$ is also a bidual of a locally convex Hausdorff space, 
more precisely topologically isomorphic to a bidual. In contrast to the strategy in 
\cite[Section 2, p.~118--122]{bierstedt1992}, we do not achieve this by replacing $(X_{\mathcal{B},\widetilde{\tau}}',\beta)$ 
by the strong dual $H_{b}'$ of a suitable topological subspace $H$ of $(X,\tau)$, but by finding a locally convex Hausdorff topology 
$\widetilde{\gamma}$ on $X$ such that $(X,\widetilde{\gamma})_{b}'=(X_{\mathcal{B},\widetilde{\tau}}',\beta)$ 
under suitable assumptions. This is the strategy that is also used in \cite[1.3 Proposition, p.~276]{bierstedt1993} 
and \cite[4.7 Proposition (b), p.~877--878]{mujica1991} in the case that $X$ is a weighted space of holomorphic functions. 
This needs a bit of preparation.

\begin{defn}
Let $X$ be a locally convex Hausdorff space. We call $X$ a \emph{bidual space} if there are a locally convex Hausdorff space $Y$ 
and a topological isomorphism $\varphi\colon X\to (Y_{b}')_{b}'$. The tuple $(Y,\varphi)$ is called a \emph{prebidual} of $X$. 
\end{defn}

We note that if $X$ is a bidual space with prebidual $(Y,\varphi)$, then $X$ is a dual space with predual $(Y_{b}',\varphi)$. 
In particular, every reflexive locally convex Hausdorff space is a bidual space. 

\begin{prop}\label{prop:bounded_equicont}
Let $(X,\tau)$ be a locally convex Hausdorff space, $B\subset X$ and 
$\mathcal{J}\colon (X,\tau)\to ((X,\tau)_{b}')'$, $x\longmapsto[x'\mapsto x'(x)]$, the canonical linear injection. 
Then $B$ is $\tau$-bounded if and only if $\mathcal{J}(B)\subset ((X,\tau)_{b}')'$ is equicontinuous.
\end{prop}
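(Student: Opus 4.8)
The plan is to unwind the notion of equicontinuity into a bipolar condition for the dual pairing between $X$ and $X'$, and then to invoke the standard fact that the closed absolutely convex hull of a bounded set is bounded.

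First I would recall that a subset $H$ of the topological dual of a locally convex Hausdorff space $E$ is equicontinuous if and only if $H$ is contained in the polar $V^{\circ}$ of some $0$-neighbourhood $V$ of $E$. Applying this with $E=(X,\tau)_{b}'=(X',\beta(X',X))$ and using that the polars $A^{\circ}=\{x'\in X'\;|\;\sup_{x\in A}|x'(x)|\leq 1\}$, with $A$ ranging over the $\tau$-bounded subsets of $X$, form a $0$-neighbourhood basis of $(X,\tau)_{b}'$, I obtain: $\mathcal{J}(B)$ is equicontinuous if and only if there is a $\tau$-bounded set $A\subset X$ with $\mathcal{J}(B)\subset (A^{\circ})^{\circ}$, where $(A^{\circ})^{\circ}\subset ((X,\tau)_{b}')'$ denotes the polar of $A^{\circ}$. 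Since for $x\in X$ one has $\mathcal{J}(x)\in (A^{\circ})^{\circ}$ exactly when $|x'(x)|\leq 1$ for all $x'\in A^{\circ}$, i.e.~exactly when $x$ lies in the bipolar $A^{\circ\circ}$ of $A$ taken in $X$ with respect to the duality between $X$ and $X'$, the condition $\mathcal{J}(B)\subset (A^{\circ})^{\circ}$ is the same as $B\subset A^{\circ\circ}$. Hence $\mathcal{J}(B)$ is equicontinuous if and only if $B$ is contained in the bipolar of some $\tau$-bounded subset of $X$.

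It then remains to check both directions of this last equivalence. If $B$ is $\tau$-bounded, the choice $A\coloneqq B$ suffices, since $B\subset B^{\circ\circ}$ always. Conversely, if $B\subset A^{\circ\circ}$ for some $\tau$-bounded $A$, then by the bipolar theorem $A^{\circ\circ}=\oacx(A)$, where the closure may be taken with respect to $\tau$ (the $\tau$- and $\sigma(X,X')$-closures of the absolutely convex set $\acx(A)$ coincide because $\tau$ is a topology of the pairing between $X$ and $X'$), and $\oacx(A)$ is $\tau$-bounded because the absolutely convex hull of a $\tau$-bounded set, as well as its $\tau$-closure, is again $\tau$-bounded; consequently $B$ is $\tau$-bounded. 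The whole argument is essentially a bookkeeping exercise through the bipolar theorem, and the only point requiring a little care — the step I would single out — is this final boundedness claim for $A^{\circ\circ}$, which is where one genuinely uses that $\tau$ is compatible with the duality $\langle X,X'\rangle$.
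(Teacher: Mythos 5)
Your argument is correct. It differs from the paper's proof only in the backward direction: the paper reads off from equicontinuity the estimate $|x'(x)|\leq C\sup_{z\in\widetilde{B}}|x'(z)|$ for all $x\in B$ and $x'\in X'$, concludes that $B$ is $\sigma(X,(X,\tau)')$-bounded, and then invokes the Mackey theorem to get $\tau$-boundedness; you instead translate equicontinuity into the containment $B\subset A^{\circ\circ}$ for a $\tau$-bounded $A$ and conclude via the bipolar theorem that $B\subset\oacx(A)$, which is $\tau$-bounded. The two routes rest on the same underlying duality facts (your identification of $A^{\circ\circ}$ with the $\tau$-closed absolutely convex hull is precisely the point where compatibility of $\tau$ with the pairing $\langle X,X'\rangle$ enters, just as Mackey's theorem is for the paper), so neither is more elementary; the paper's version is marginally shorter, while yours has the small benefit of exhibiting an explicit $\tau$-bounded absolutely convex superset of $B$, namely $\oacx(A)$. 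The forward directions are essentially identical: your observation that $\mathcal{J}(B)\subset(B^{\circ})^{\circ}$ with $B^{\circ}$ a $0$-neighbourhood of $(X,\tau)_{b}'$ is just the polar reformulation of the paper's inequality $|\mathcal{J}(x)(x')|\leq\sup_{z\in B}|x'(z)|$.
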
 
\begin{proof}
$\Rightarrow$ Let $B$ be $\tau$-bounded. Then we have
\[
|\mathcal{J}(x)(x')|=|x'(x)|\leq \sup_{z\in B}|x'(z)|
\]
for all $x\in B$ and $x'\in (X,\tau)'$, meaning that $\mathcal{J}(B)$ is equicontinuous. 

$\Leftarrow$ Let $\mathcal{J}(B)$ be equicontinuous. Then there are a $\tau$-bounded set $\widetilde{B}\subset X$ and $C\geq 0$ 
such that 
\[
|x'(x)|=|\mathcal{J}(x)(x')|\leq C \sup_{z\in \widetilde{B}}|x'(z)|<\infty
\]
for all $x\in B$ and $x'\in (X,\tau)'$. This yields that $B$ is $\sigma(X,(X,\tau)')$-bounded and thus $\tau$-bounded 
by the Mackey theorem.
\end{proof}

\begin{prop}\label{prop:bornological_comp_top}
Let $(X,\tau)$ be a bornological locally convex Hausdorff space and $\widetilde{\tau}$ a locally convex Hausdorff topology on $X$. 
Then $\widetilde{\tau}\leq\tau$ if and only if every $\tau$-bounded set is $\widetilde{\tau}$-bounded.
\end{prop}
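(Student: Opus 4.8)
The statement to prove is \prettyref{prop:bornological_comp_top}: for a bornological locally convex Hausdorff space $(X,\tau)$ and a locally convex Hausdorff topology $\widetilde{\tau}$ on $X$, we have $\widetilde{\tau}\leq\tau$ if and only if every $\tau$-bounded set is $\widetilde{\tau}$-bounded. The plan is to prove the easy implication directly and the hard implication via the universal property of bornological spaces.

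First I would handle the forward direction $\widetilde{\tau}\leq\tau\Rightarrow$ (every $\tau$-bounded set is $\widetilde{\tau}$-bounded). This is immediate and holds without bornologicality: if $B\subset X$ is $\tau$-bounded and $\widetilde{U}$ is a $\widetilde{\tau}$-neighbourhood of $0$, then $\widetilde{U}$ is also a $\tau$-neighbourhood of $0$ because $\widetilde{\tau}\leq\tau$, so $B$ is absorbed by $\widetilde{U}$; hence $B$ is $\widetilde{\tau}$-bounded. More conceptually, the identity map $\id\colon(X,\tau)\to(X,\widetilde{\tau})$ is continuous, hence (locally) bounded, so it maps $\tau$-bounded sets to $\widetilde{\tau}$-bounded sets.

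For the converse, assume every $\tau$-bounded set is $\widetilde{\tau}$-bounded. Then the identity map $\id\colon(X,\tau)\to(X,\widetilde{\tau})$ is a (locally) bounded linear map between locally convex Hausdorff spaces. Since $(X,\tau)$ is bornological, every bounded linear map from $(X,\tau)$ into any locally convex Hausdorff space is continuous (see e.g.~\cite[Observation, 24.10 Proposition, p.~282]{meisevogt1997} or the defining property of bornological spaces in \cite[28.1, p.~380]{meisevogt1997}). Therefore $\id\colon(X,\tau)\to(X,\widetilde{\tau})$ is continuous, which means precisely that $\widetilde{\tau}\leq\tau$.

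The only ``obstacle'' is invoking the correct characterisation of bornological spaces, namely that a locally convex Hausdorff space $X$ is bornological if and only if every bounded linear map from $X$ to an arbitrary locally convex space is continuous; this is standard and available in the cited references. No delicate estimates or constructions are needed: the whole argument is the observation that the identity map is the natural candidate and that bornologicality converts ``bounded'' into ``continuous''.
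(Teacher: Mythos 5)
Your proposal is correct and matches the paper's proof exactly: the forward implication is immediate from continuity of the identity map, and the converse uses that the bounded identity map $\id\colon(X,\tau)\to(X,\widetilde{\tau})$ is continuous because $(X,\tau)$ is bornological (the paper cites \cite[Proposition 24.13, p.~283]{meisevogt1997} for this characterisation). No substantive differences.
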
 
\begin{proof}
The implication $\Rightarrow$ is obvious. Let us turn to $\Leftarrow$. We note that the identity map 
$\id\colon (X,\tau)\to(X,\widetilde{\tau})$ is bounded since every $\tau$-bounded set is $\widetilde{\tau}$-bounded. 
This yields that $\id$ is continuous by \cite[Proposition 24.13, p.~283]{meisevogt1997} as $(X,\tau)$ is bornological.
\end{proof}

\begin{defn}
Let $(X,\tau)$ be a locally convex Hausdorff space and $\widetilde{\tau}$ a locally convex Hausdorff topology on $X$. 
Let $\widetilde{\gamma}\coloneqq\widetilde{\gamma}(\tau,\widetilde{\tau})$ denote the finest 
locally convex Hausdorff topology on $X$ which coincides with $\widetilde{\tau}$ on $\tau$-bounded sets. 
We say that $(X,\widetilde{\gamma})$ satisfies $\operatorname{(B\tau B}\widetilde{\gamma})$ if a subset of $X$ is $\tau$-bounded if 
and only if it is $\widetilde{\gamma}$-bounded.
\end{defn}

For a linear space $X$ we denote by $\acx(U)$ the \emph{absolutely convex hull} of a subset $U\subset X$. 

\begin{rem}\label{rem:gen_mixed_top}
Let $(X,\tau)$ be a locally convex Hausdorff space, $\mathcal{B}$ the family of $\tau$-bounded sets and 
$\widetilde{\tau}$ a locally convex Hausdorff topology on $X$.
\begin{enumerate}
\item[(a)] The sets $\acx(\bigcup_{B\in\mathcal{B}}U_{B}\cap B)$ where each $U_{B}$ is a 
$0$-neighbourhood in $(X,\widetilde{\tau})$ for $B\in\mathcal{B}$ form a basis of absolutely convex $0$-neighbourhoods for 
$\widetilde{\gamma}$ since $\widetilde{\gamma}$ is the finest locally convex Hausdorff topology on $X$ 
which coincides with $\widetilde{\tau}$ on $\tau$-bounded sets. 
Further, we note that we may restrict to absolutely convex $\tau$-bounded sets $B$ and absolutely convex $\widetilde{\tau}$-closed 
$0$-neighbourhoods $U_{B}$ in $(X,\widetilde{\tau})$ as $(X,\tau)$ and $(X,\widetilde{\tau})$ are locally convex Hausdorff spaces. 
\item[(b)] If $\widetilde{\tau}_{0}$ is another locally convex Hausdorff topology on $X$ which coincides with $\widetilde{\tau}$ 
on $\tau$-bounded sets, then $\widetilde{\gamma}(\tau,\widetilde{\tau})=\widetilde{\gamma}(\tau,\widetilde{\tau}_{0})$.
\item[(c)] If $(X,\tau)$ is bornological and $(X,\widetilde{\gamma})$ satisfies $\operatorname{(B\tau B}\widetilde{\gamma})$, 
then $\widetilde{\tau}\leq\widetilde{\gamma}\leq\tau$. Indeed, $\widetilde{\tau}\leq\widetilde{\gamma}$ follows from the definition of $\widetilde{\gamma}$, and $\widetilde{\gamma}\leq\tau$ 
from \prettyref{prop:bornological_comp_top}.
\item[(d)] If $(X,\tau)$ is bornological, $\widetilde{\tau}=\tau$ and $(X,\widetilde{\gamma})$ satisfies 
$\operatorname{(B\tau B}\widetilde{\gamma})$, then $\widetilde{\gamma}=\widetilde{\gamma}(\tau,\tau)=\tau$. 
This follows directly from part (c).
\item[(e)] Let $(X,\widetilde{\gamma})$ satisfy $\operatorname{(B\tau B}\widetilde{\gamma})$. 
Then a subset of $X$ is $\widetilde{\gamma}$-compact (precompact, relatively compact) if and only if it is $\tau$-bounded and 
$\widetilde{\tau}$-compact (precompact, relatively compact). This follows directly from the definitions of $\widetilde{\gamma}$ 
and $\operatorname{(B\tau B}\widetilde{\gamma})$.
\end{enumerate}
\end{rem}

\prettyref{rem:gen_mixed_top} (e) is a generalisation of \cite[I.1.12 Proposition, p.~10]{cooper1978} 
and yields the following corollary, which itself generalises \cite[I.1.13 Proposition, p.~11]{cooper1978} 
by \prettyref{prop:mixed=gen_mixed} below.

\begin{cor}\label{cor:semi_montel_gen_mixed}
Let $(X,\tau)$ be a locally convex Hausdorff space, $\widetilde{\tau}$ a locally convex Hausdorff topology on $X$ and 
$(X,\widetilde{\gamma})$ satisfy $\operatorname{(B\tau B}\widetilde{\gamma})$. 
Then $(X,\widetilde{\gamma})$ is a semi-Montel space if and only if $(X,\tau)$ satisfies $\operatorname{(BBC})$ for 
$\widetilde{\tau}$.
\end{cor}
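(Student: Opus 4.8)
The plan is to prove the two implications separately, using \prettyref{rem:gen_mixed_top} (e) as the main bridge between $\widetilde{\gamma}$-(pre)compactness and the conjunction "$\tau$-bounded and $\widetilde{\tau}$-compact", together with \prettyref{rem:BBC_CNC} (b), which says $(X,\tau)$ (assumed bornological, in fact we should note that $\operatorname{(BBC)}$ forces bornologicality only once we have it, so here we just work with the definition) satisfies $\operatorname{(BBC)}$ for $\widetilde{\tau}$ if and only if the family $\mathcal{B}$ of $\tau$-bounded sets has a basis of absolutely convex $\widetilde{\tau}$-compact sets. Recall a semi-Montel space is one in which every bounded set is relatively compact.

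First I would prove the implication "$(X,\widetilde{\gamma})$ semi-Montel $\Rightarrow$ $\operatorname{(BBC)}$ for $\widetilde{\tau}$". Let $B\subset X$ be $\tau$-bounded. By $\operatorname{(B\tau B}\widetilde{\gamma})$ it is $\widetilde{\gamma}$-bounded, hence by semi-Montelness its $\widetilde{\gamma}$-closure $\overline{B}^{\widetilde{\gamma}}$ is $\widetilde{\gamma}$-compact; passing to the absolutely convex $\widetilde{\gamma}$-closed hull, which in a semi-Montel (in particular quasi-complete) space is still $\widetilde{\gamma}$-compact, we obtain an absolutely convex $\widetilde{\gamma}$-compact set $K\supset B$. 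By \prettyref{rem:gen_mixed_top} (e) the set $K$ is then $\tau$-bounded and $\widetilde{\tau}$-compact. Since this works for every $B\in\mathcal{B}$, the family $\mathcal{B}$ has a basis of absolutely convex $\widetilde{\tau}$-compact sets, so $(X,\tau)$ satisfies $\operatorname{(BBC)}$ for $\widetilde{\tau}$ by \prettyref{rem:BBC_CNC} (b). (A small point to handle: to invoke \prettyref{rem:BBC_CNC} (b) in the stated form one uses that $(X,\tau)$ is bornological; if one does not want to assume that, one argues directly from the definition of $\operatorname{(BBC)}$, which only requires each $\tau$-bounded set to sit inside an absolutely convex $\tau$-bounded $\widetilde{\tau}$-compact set — and $\widetilde{\tau}$-compact subsets of $X$ are $\tau$-bounded when $\widetilde{\tau}\le\tau$, which holds here by \prettyref{rem:gen_mixed_top} (c).)

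Conversely, assume $(X,\tau)$ satisfies $\operatorname{(BBC)}$ for $\widetilde{\tau}$, and let $A\subset X$ be $\widetilde{\gamma}$-bounded. By $\operatorname{(B\tau B}\widetilde{\gamma})$, $A$ is $\tau$-bounded, so by $\operatorname{(BBC)}$ (via \prettyref{rem:BBC_CNC} (b), or directly) there is an absolutely convex $\tau$-bounded $\widetilde{\tau}$-compact set $K$ with $A\subset K$. Then $K$ is $\tau$-bounded and $\widetilde{\tau}$-compact, hence $\widetilde{\gamma}$-compact by \prettyref{rem:gen_mixed_top} (e). Therefore $A$ is contained in a $\widetilde{\gamma}$-compact set, so $A$ is $\widetilde{\gamma}$-relatively compact; since $A$ was an arbitrary $\widetilde{\gamma}$-bounded set, $(X,\widetilde{\gamma})$ is a semi-Montel space.

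The only genuinely delicate point is the first implication: one must be sure that the closed absolutely convex hull of a $\widetilde{\gamma}$-compact set is again $\widetilde{\gamma}$-compact, so that the basis of $\mathcal{B}$ can be taken to consist of \emph{absolutely convex} sets as required by $\operatorname{(BBC)}$. This is where semi-Montelness (hence quasi-completeness) of $(X,\widetilde{\gamma})$ is used — in a quasi-complete locally convex space the closed absolutely convex hull of a compact set is compact. Everything else is a direct application of the definitions of $\widetilde{\gamma}$ and of $\operatorname{(B\tau B}\widetilde{\gamma})$ as packaged in \prettyref{rem:gen_mixed_top} (e) and \prettyref{rem:BBC_CNC} (b).
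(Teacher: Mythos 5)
Your proof is correct and follows essentially the route the paper intends: the paper derives this corollary directly from \prettyref{rem:gen_mixed_top} (e), and your two implications are exactly the natural unpacking of that remark, with the absolute-convexity issue correctly settled via quasi-completeness of semi-Montel spaces (equivalently, one may note that the $\widetilde{\gamma}$-closed absolutely convex hull of a $\widetilde{\gamma}$-bounded set is again bounded and closed, hence compact by semi-Montelness). Your side remark invoking \prettyref{rem:gen_mixed_top} (c) is superfluous (and that item assumes bornologicality), but it plays no role since \prettyref{rem:gen_mixed_top} (e) already delivers the $\tau$-boundedness you need.
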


\begin{exa}\label{ex:gen_mixed_dual}
Let $(X,\tau)$ be a complete barrelled locally convex Hausdorff space. Then 
\[
\widetilde{\gamma}\coloneqq \widetilde{\gamma}(\beta(X',X),\sigma(X',X))=\tau_{\operatorname{c}}(X',X)
\]
and $(X',\widetilde{\gamma})$ satisfies $(\operatorname{B}\beta(X',X)\operatorname{B}\widetilde{\gamma})$ and is a semi-Montel space. 
Further, the evaluation map 
\[
\mathcal{J}_{X}\colon (X,\tau)\to (X',\widetilde{\gamma})_{b}',\;x\longmapsto[x'\mapsto x'(x)],
\]
is a topological isomorphism. 
\end{exa}
\begin{proof}
Since $(X,\tau)$ is barrelled, a subset of $X'=(X,\tau)'$ is $\beta(X',X)$-bounded if and only it is $\tau$-equicontinuous by 
\cite[Theorem 33.2, p.~349]{treves2006}. Now, the completeness of $(X,\tau)$ and \cite[\S 21.9, (7), p.~271]{koethe1969} 
imply that $\tau_{\operatorname{c}}(X',X)$ is the finest locally convex Hausdorff topology that coincides with $\sigma(X',X)$ 
on the $\tau$-equicontinuous subsets of $X'$. 
This yields $\widetilde{\gamma}(\beta(X',X),\sigma(X',X))=\tau_{\operatorname{c}}(X',X)$ by the definition of $\widetilde{\gamma}$. 
Again, due to \cite[Theorem 33.2, p.~349]{treves2006} a subset of $X'$ is $\beta(X',X)$-bounded if and only it is 
$\tau_{\operatorname{c}}(X',X)$-bounded since $\sigma(X',X)\leq\tau_{\operatorname{c}}(X',X)\leq\beta(X',X)$. 
Hence $(X',\widetilde{\gamma})$ satisfies $(\operatorname{B}\beta(X',X)\operatorname{B}\widetilde{\gamma})$. 
It also follows from \cite[Theorem 33.2, p.~349]{treves2006} that $(X',\beta(X',X))$ satisfies 
$\operatorname{(BBC})$ for $\sigma(X',X)$, implying that $(X',\widetilde{\gamma})$ is a semi-Montel space 
by \prettyref{cor:semi_montel_gen_mixed}.

Further, as a subset of $X'$ is $\beta(X',X)$-bounded if and only if 
it is $\tau_{\operatorname{c}}(X',X)$-bounded and $\widetilde{\gamma}=\tau_{\operatorname{c}}(X',X)$, 
the evaluation map $\mathcal{J}_{X}$ is a topological isomorphism by \cite[11.2.2 Proposition, p.~222]{jarchow1981} 
and the Mackey--Arens theorem. 
\end{proof}

In view of \prettyref{prop:mixed=gen_mixed} below, \prettyref{ex:gen_mixed_dual} improves 
\cite[I.2.A Examples, p.~20--21]{cooper1978} and \cite[Example E), p.~66]{wiweger1961} where $(X,\tau)$ is a Fr\'echet 
resp.~Banach space. \prettyref{ex:gen_mixed_dual} also shows that $(X',\tau_{\operatorname{c}}(X',X))$ is a predual for 
any complete barrelled locally convex Hausdorff space $(X,\tau)$. 
However, this predual may not have the properties of a predual one is looking for. 
For instance, if $(X,\tau)$ is completely normable, one is naturally looking for a completely normable predual as well. 
But the predual $(X',\tau_{\operatorname{c}}(X',X))$ is a semi-Montel space, so not normable unless $X$ is finite-dimensional. 

Next, we present a non-trivial sufficient condition that guarantees that $(X,\widetilde{\gamma})$ satisfies 
$\operatorname{(B\tau B}\widetilde{\gamma})$. 
Let us recall the definition of the mixed topology in the sense of \cite[p.~5--6]{cooper1978}. 
Let $(X,\tau)$ be a bornological locally convex Hausdorff space, $\mathcal{B}$ the family of $\tau$-bounded sets 
and $\widetilde{\tau}$ a locally convex Hausdorff topology on $X$ such that $\widetilde{\tau}\leq\tau$ 
(see \prettyref{prop:bornological_comp_top}). 
Suppose that $\mathcal{B}$ is of \emph{countable type}, i.e.~it has a countable basis, and that $(X,\tau)$ satisfies 
$\operatorname{(BBCl)}$ for $\widetilde{\tau}$. Then it follows that $\mathcal{B}$ has a countable basis $(B_{n})_{n\in\N}$ 
consisting of absolutely convex $\widetilde{\tau}$-closed sets such that $2B_{n}\subset B_{n+1}$ for all $n\in\N$ 
(see \cite[p.~6]{cooper1978}). For a sequence $(U_{n})_{n\in\N}$ of absolutely convex $0$-neighbourhoods in $(X,\widetilde{\tau})$ we set 
\[
\mathcal{U}((U_{n})_{n\in\N})\coloneqq\bigcup_{n=1}^{\infty}\sum_{k=1}^{n}(U_{k}\cap B_{k}).
\]
The family of such sets forms a basis of $0$-neighbourhoods of a locally convex Hausdorff topology on $X$, which is called 
the \emph{mixed topology} and denoted by $\gamma \coloneqq \gamma(\tau,\widetilde{\tau})$. 
We note that our definition of the mixed topology is slightly less general than the one given in \cite[p.~5--6]{cooper1978} 
since we only consider the case where the bornology $\mathcal{B}$ is induced by a topology, namely $\tau$, i.e.~we only consider 
the case of von Neumann bornologies. If $\tau$ is induced by a norm $\|\cdot\|$, then $\mathcal{B}$ is of countable type 
and the mixed topology defined above coincides with the mixed topology in the sense of \cite[p.~49]{wiweger1961}. 
Besides normable spaces, examples of spaces $(X,\tau)$ with $\mathcal{B}$ of countable type are df-spaces, in particular 
gDF-spaces and DF-spaces (see \cite[p.~257]{jarchow1981}). 

\begin{prop}[{\cite[I.1.5 Proposition (iii), I.1.11 Proposition, p.~7, 10]{cooper1978}}]\label{prop:mixed=gen_mixed}
Let $(X,\tau)$ be a bornological locally convex Hausdorff space, 
the family $\mathcal{B}$ of $\tau$-bounded sets be of countable type and $(X,\tau)$ satisfy 
$\operatorname{(BBCl)}$ for some $\widetilde{\tau}\leq\tau$. Then $\widetilde{\gamma}=\gamma$ and 
$(X,\widetilde{\gamma})$ satisfies $\operatorname{(B\tau B}\widetilde{\gamma})$.
\end{prop}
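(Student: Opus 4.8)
The plan is to establish first that $\widetilde{\gamma}=\gamma$ and then that $(X,\widetilde{\gamma})$ satisfies $\operatorname{(B\tau B}\widetilde{\gamma})$, working throughout with the countable basis $(B_{n})_{n\in\N}$ of $\mathcal{B}$ consisting of absolutely convex $\widetilde{\tau}$-closed sets with $2B_{n}\subset B_{n+1}$ for all $n$, which is available under our hypotheses (see the paragraph preceding the statement). Note that then $B_{n}\subset B_{n+1}$ and, by iteration, $2^{k}B_{k}\subset B_{2k}$ for all $k\in\N$.

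For $\widetilde{\gamma}=\gamma$ I would prove both inclusions. Since $\widetilde{\gamma}$ is by definition the finest locally convex Hausdorff topology on $X$ coinciding with $\widetilde{\tau}$ on the $\tau$-bounded sets, to get $\gamma\leq\widetilde{\gamma}$ it suffices to check that $\gamma$ and $\widetilde{\tau}$ induce the same topology on every $\tau$-bounded set, and since $(B_{n})_{n\in\N}$ is a basis of $\mathcal{B}$ it is enough to check this on each $B_{m}$. On the one hand, for every absolutely convex $0$-neighbourhood $U$ of $(X,\widetilde{\tau})$ one has $\mathcal{U}((2^{-k}U)_{k\in\N})\subset U$ because $U$ is absolutely convex and $\sum_{k=1}^{\infty}2^{-k}\leq 1$, so $\widetilde{\tau}\leq\gamma$ and hence $\widetilde{\tau}_{\mid B_{m}}\leq\gamma_{\mid B_{m}}$. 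On the other hand, every basic $\gamma$-neighbourhood $\mathcal{U}((U_{k})_{k\in\N})$ contains $U_{m}\cap B_{m}$ (keep only the summand with $k=m$ and use $0\in U_{k}\cap B_{k}$), and $U_{m}$ is a $\widetilde{\tau}$-neighbourhood, so $\gamma_{\mid B_{m}}\leq\widetilde{\tau}_{\mid B_{m}}$. For the reverse inclusion $\widetilde{\gamma}\leq\gamma$ I would use the $0$-neighbourhood basis of $\widetilde{\gamma}$ from \prettyref{rem:gen_mixed_top}~(a): given absolutely convex $\widetilde{\tau}$-closed $0$-neighbourhoods $(U_{B})_{B\in\mathcal{B}}$ of $(X,\widetilde{\tau})$, set $V_{k}\coloneqq 2^{-k}(U_{B_{k}}\cap U_{B_{2k}})$, again such a $0$-neighbourhood. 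If $z_{k}\in V_{k}\cap B_{k}$, then $2^{k}z_{k}\in U_{B_{2k}}$ and $2^{k}z_{k}\in 2^{k}B_{k}\subset B_{2k}$, hence $z_{k}\in 2^{-k}(U_{B_{2k}}\cap B_{2k})\subset 2^{-k}\acx\bigl(\bigcup_{B\in\mathcal{B}}U_{B}\cap B\bigr)$; summing over $k=1,\dots,j$ and using $\sum_{k=1}^{j}2^{-k}<1$ shows $\mathcal{U}((V_{k})_{k\in\N})\subset\acx\bigl(\bigcup_{B\in\mathcal{B}}U_{B}\cap B\bigr)$. Thus $\widetilde{\gamma}\leq\gamma$, and therefore $\widetilde{\gamma}=\gamma$.

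It remains to show $\operatorname{(B\tau B}\widetilde{\gamma})$, i.e.~that a subset of $X$ is $\tau$-bounded if and only if it is $\gamma$-bounded. If $A\subset X$ is $\tau$-bounded, then $A\subset B_{m}$ for some $m$, and since $\widetilde{\tau}\leq\tau$ the set $B_{m}$ is $\widetilde{\tau}$-bounded, hence absorbed by any $\widetilde{\tau}$-neighbourhood $U_{m}$; as $\mathcal{U}((U_{k})_{k\in\N})\supset U_{m}\cap B_{m}$, the set $A$ is absorbed by every basic $\gamma$-neighbourhood, so $A$ is $\gamma$-bounded. The hard part will be the converse, which I would prove by contraposition. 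Assume $A\subset X$ is not $\tau$-bounded. Since $(B_{n})_{n\in\N}$ is a basis of $\mathcal{B}$, for every $n$ there is $a_{n}\in A$ with $a_{n}\notin n^{2}B_{n}$; in particular $a_{n}\neq 0$. As $n^{2}B_{n}$ is absolutely convex and $\widetilde{\tau}$-closed, the bipolar theorem (Hahn--Banach) applied in $(X,\widetilde{\tau})$ provides $\psi_{n}\in(X,\widetilde{\tau})'$ with $\sup_{x\in n^{2}B_{n}}|\psi_{n}(x)|\leq 1<|\psi_{n}(a_{n})|$; setting $\phi_{n}\coloneqq n\psi_{n}\in(X,\widetilde{\tau})'$ we obtain $\sup_{x\in B_{n}}|\phi_{n}(x)|\leq 1/n$ and $|\phi_{n}(a_{n})|>n$. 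Now put $V_{k}\coloneqq\bigcap_{i=1}^{k}\{x\in X\;|\;|\phi_{i}(x)|\leq 2^{-k}\}$, an absolutely convex $\widetilde{\tau}$-closed $0$-neighbourhood of $(X,\widetilde{\tau})$, so that $\mathcal{U}((V_{k})_{k\in\N})$ is a $\gamma$-neighbourhood. For any $u=\sum_{k=1}^{j}z_{k}$ with $z_{k}\in V_{k}\cap B_{k}$ and any $n$, splitting the sum into the terms with $k<n$ (where $z_{k}\in B_{k}\subset B_{n}$, so $|\phi_{n}(z_{k})|\leq 1/n$) and the terms with $k\geq n$ (where $z_{k}\in V_{k}$, so $|\phi_{n}(z_{k})|\leq 2^{-k}$) yields $|\phi_{n}(u)|\leq (n-1)/n+\sum_{k=n}^{\infty}2^{-k}<2$. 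Consequently $|\phi_{n}|<2m$ on $m\,\mathcal{U}((V_{k})_{k\in\N})$ for all $m,n\in\N$, whereas $|\phi_{n}(a_{n})|>n$; hence no scalar multiple of $\mathcal{U}((V_{k})_{k\in\N})$ contains $\{a_{n}\;|\;n\in\N\}$, so $A$ is not $\gamma$-bounded.

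The step I expect to be the main obstacle is this last contraposition: one must choose the separating functionals $\phi_{n}$ with enough slack — scaled so that $|\phi_{n}(a_{n})|>n$ while $\sup_{B_{n}}|\phi_{n}|$ stays summably small — which is exactly where the absolute convexity and, crucially, the $\widetilde{\tau}$-closedness of the sets $n^{2}B_{n}$ (condition $\operatorname{(BBCl)}$) are used, and then one must encode all of them into a single sequence $(V_{k})_{k\in\N}$ of $\widetilde{\tau}$-neighbourhoods (this is where $\mathcal{B}$ being of countable type enters) so that the mixed-topology neighbourhood $\mathcal{U}((V_{k})_{k\in\N})$ controls every $\phi_{n}$ simultaneously.
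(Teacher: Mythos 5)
Your proof is correct, and it is essentially the classical Cooper--Wiweger argument that the paper does not reproduce but delegates entirely to the citation of \cite[I.1.5 Proposition (iii), I.1.11 Proposition]{cooper1978}: the inclusion $\widetilde{\gamma}\leq\gamma$ via the scaled neighbourhoods $V_{k}=2^{-k}(\cdots)$ together with $2^{k}B_{k}\subset B_{2k}$, and the boundedness statement via Hahn--Banach separation from the $\widetilde{\tau}$-closed sets $n^{2}B_{n}$, are exactly the steps carried out there. The only cosmetic point is in the verification that $\gamma$ and $\widetilde{\tau}$ coincide on $B_{m}$: at a base point $x\neq 0$ one needs a $\widetilde{\tau}$-neighbourhood $V$ of $0$ with $V\cap(B_{m}-x)\subset\mathcal{U}((U_{k})_{k\in\N})$, and since $B_{m}-x\subset 2B_{m}\subset B_{m+1}$ this is supplied by $V=U_{m+1}$ rather than $U_{m}$ --- a one-index shift that does not affect the argument.
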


However, we note that the condition that $\mathcal{B}$ is of countable type is not a necessary condition for 
$(X,\widetilde{\gamma})$ satisfying $\operatorname{(B\tau B}\widetilde{\gamma})$ 
by \prettyref{ex:gen_mixed_dual} or by \cite[p.~272, 274, 276]{bierstedt1993} where in the latter case 
$X=\mathcal{HV}(\Omega)$ is a weighted space of holomorphic functions on a (balanced) open set $\Omega\subset\C^{N}$, $\tau=\tau_{\mathcal{V}}$ is the weighted topology 
w.r.t.~to a family $\mathcal{V}$ of non-negative upper semicontinuous functions on $\Omega$ such that 
$\widetilde{\tau}\coloneqq\tau_{\operatorname{co}}\leq\tau_{\mathcal{V}}$ and $\widetilde{\gamma}=\overline{\tau}$ (see \prettyref{ex:borno_frechet_hypo} as well with $N=1$, i.e.~$d=2$ there, and $P(\partial)$ being the Cauchy--Riemann operator).

Now, we show that $(X_{\mathcal{B},\widetilde{\tau}}',\beta)$ coincides with the strong dual 
of $(X,\widetilde{\gamma})$ under suitable assumptions. The proof is an adaptation of the proofs of 
\cite[I.1.7 Corollary, p.~8]{cooper1978}, \cite[I.1.18 Proposition, p.~15]{cooper1978} and \cite[I.1.20 Proposition, p.~16]{cooper1978}.

\begin{prop}\label{prop:predual_is_dual}
Let $(X,\tau)$ be a bornological locally convex Hausdorff space, $\mathcal{B}$ the family of $\tau$-bounded sets, $\widetilde{\tau}$ 
a locally convex Hausdorff topology on $X$ and $(X,\widetilde{\gamma})$ satisfy $\operatorname{(B\tau B}\widetilde{\gamma})$. 
Then the following assertions hold.
\begin{enumerate}
\item[(a)] $(X,\widetilde{\gamma})'$ is a closed subspace of $(X,\tau)_{b}'$, and 
$(X,\widetilde{\gamma})'=X_{\mathcal{B},\widetilde{\tau}}'$. Further, it holds
\[
 \beta((X,\widetilde{\gamma})',(X,\widetilde{\gamma}))
=\beta=\beta(X_{\mathcal{B},\widetilde{\tau}}',(X,\tau))
=\beta((X,\tau)',(X,\tau))_{\mid (X,\widetilde{\gamma})'}.
\]
In particular, $(X,\widetilde{\gamma})_{b}'$ is complete. 
\item[(b)] If $(X,\tau)$ satisfies $\operatorname{(BBCl)}$ for $\widetilde{\tau}$, then 
$(X,\widetilde{\gamma})'$ is the closure of $(X,\widetilde{\tau})'$ in $(X,\tau)_{b}'$.
\item[(c)] Let $(X,\tau)$ satisfy $\operatorname{(BBCl)}$ for $\widetilde{\tau}$. 
A subset of $X$ is $\sigma(X,(X,\widetilde{\gamma})')$-compact if and only if it is $\tau$-bounded and 
$\sigma(X,(X,\widetilde{\tau})')$-compact.
\end{enumerate}
\end{prop}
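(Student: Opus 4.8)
The plan is to establish (a) first and then derive (b) and (c) from it. For (a): I would first note $\widetilde{\tau}\le\widetilde{\gamma}\le\tau$, the first inequality by the definition of $\widetilde{\gamma}$ and the second by \prettyref{rem:gen_mixed_top}~(c), so that $(X,\widetilde{\tau})'\subset(X,\widetilde{\gamma})'\subset(X,\tau)'$. To see $(X,\widetilde{\gamma})'=X_{\mathcal{B},\widetilde{\tau}}'$: every $\widetilde{\gamma}$-continuous functional is $\widetilde{\tau}$-continuous on each $B\in\mathcal{B}$ since $\widetilde{\gamma}$ and $\widetilde{\tau}$ agree on $B$; conversely, given $x^{\star}\in X_{\mathcal{B},\widetilde{\tau}}'$, for each $B\in\mathcal{B}$ choose a $\widetilde{\tau}$-neighbourhood $U_{B}$ of $0$ with $|x^{\star}|\le 1$ on $U_{B}\cap B$ (using $\widetilde{\tau}$-continuity of $x^{\star}$ at $0$ on $\acx(B)\in\mathcal{B}$) and observe that then $|x^{\star}|\le 1$ on the $\widetilde{\gamma}$-neighbourhood $\acx(\bigcup_{B\in\mathcal{B}}U_{B}\cap B)$ from \prettyref{rem:gen_mixed_top}~(a), so $x^{\star}\in(X,\widetilde{\gamma})'$. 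The three strong topologies coincide because, by $\operatorname{(B\tau B}\widetilde{\gamma})$, the $\widetilde{\gamma}$-bounded and $\tau$-bounded subsets of $X$ form the same family, so all three are the topology of uniform convergence on the $\tau$-bounded sets (restricted to the subspace in the last case). Closedness of $X_{\mathcal{B},\widetilde{\tau}}'$ in $(X,\tau)_{b}'$ follows from a net argument: a net $x_{\alpha}'\in X_{\mathcal{B},\widetilde{\tau}}'$ converging to $x'$ in $(X,\tau)_{b}'$ converges uniformly on each absolutely convex $B\in\mathcal{B}$, and a uniform limit of $\widetilde{\tau}$-continuous functions on $(B,\widetilde{\tau})$ is $\widetilde{\tau}$-continuous. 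Finally, $(X,\tau)$ bornological makes $(X,\tau)_{b}'$ complete, so its closed subspace $(X,\widetilde{\gamma})_{b}'$ is complete.

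For (b): the closure of $(X,\widetilde{\tau})'$ in $(X,\tau)_{b}'$ is contained in $(X,\widetilde{\gamma})'$ by (a), since $(X,\widetilde{\gamma})'$ is $\beta((X,\tau)',(X,\tau))$-closed and contains $(X,\widetilde{\tau})'$. For the reverse inclusion I would fix $x'\in X_{\mathcal{B},\widetilde{\tau}}'$, an absolutely convex $\widetilde{\tau}$-closed $\tau$-bounded $B$ from the basis furnished by $\operatorname{(BBCl)}$, and $\varepsilon>0$, and produce $y'\in(X,\widetilde{\tau})'$ with $\sup_{x\in B}|x'(x)-y'(x)|\le\varepsilon$; this suffices since such $B$ form a basis of the bornology. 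To get $y'$, pass to the normed space $X_{B}\coloneqq\operatorname{span}(B)$ with norm $\|\cdot\|_{B}$ the Minkowski functional of $B$ (a norm because $B$ is bounded in the Hausdorff space $(X,\tau)$, with $B$ as its closed unit ball because $B$ is $\widetilde{\tau}$-closed, hence $\|\cdot\|_{B}$-closed). Then $x'|_{X_{B}}$ is a bounded functional on $(X_{B},\|\cdot\|_{B})$ whose restriction to $B$ is continuous for the coarser locally convex Hausdorff topology $\theta\coloneqq\widetilde{\tau}|_{X_{B}}$ — this is where $x'\in X_{\mathcal{B},\widetilde{\tau}}'$ enters. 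The core lemma is: \emph{if $(E,\|\cdot\|)$ is a normed space with a coarser locally convex Hausdorff topology $\theta$, then every $f\in E'$ whose restriction to $B_{\|\cdot\|}$ is $\theta$-continuous lies in the $\|\cdot\|_{E'}$-closure of $(E,\theta)'$.} I would prove it by picking a $\theta$-continuous seminorm $q$ with $|f(x)|\le\varepsilon$ whenever $\|x\|\le 1$ and $q(x)\le 1$, noting $|f|\le\varepsilon\|\cdot\|+\varepsilon q=:p$ on $E$ (evaluate $f$ at $x/\max(\|x\|,q(x))$), and separating in the dual pair $\langle E',E\rangle$: $A\coloneqq\varepsilon B_{E'}$ is $\sigma(E',E)$-compact by Alaoglu--Bourbaki, $C\coloneqq\{g\in E'\;|\;|g|\le\varepsilon q\}$ is $\sigma(E',E)$-closed, convex and contained in $(E,\theta)'$, hence $A+C$ is $\sigma(E',E)$-closed and convex, and $\operatorname{Re}f(x)\le p(x)=\sup_{g\in A+C}\operatorname{Re}g(x)$ for all $x\in E$ forces $f\in A+C$; writing $f=g_{A}+g_{C}$ yields $g_{C}\in(E,\theta)'$ with $\|f-g_{C}\|_{E'}=\|g_{A}\|_{E'}\le\varepsilon$. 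Applied with $E=X_{B}$, $f=x'|_{X_{B}}$, and $(X_{B},\theta)'=(X,\widetilde{\tau})'|_{X_{B}}$ (Hahn--Banach extension), this gives the desired $y'$.

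For (c): ``$\Rightarrow$'' a $\sigma(X,(X,\widetilde{\gamma})')$-compact set $K$ is $\sigma(X,(X,\widetilde{\gamma})')$-bounded, hence $\widetilde{\gamma}$-bounded by Mackey's theorem, hence $\tau$-bounded by $\operatorname{(B\tau B}\widetilde{\gamma})$, and it remains compact for the coarser topology $\sigma(X,(X,\widetilde{\tau})')$ (using $(X,\widetilde{\tau})'\subset(X,\widetilde{\gamma})'$). ``$\Leftarrow$'' given $K$ $\tau$-bounded and $\sigma(X,(X,\widetilde{\tau})')$-compact, I would show that $\sigma(X,(X,\widetilde{\gamma})')$ and $\sigma(X,(X,\widetilde{\tau})')$ agree on $K$ and conclude that $K$ is $\sigma(X,(X,\widetilde{\gamma})')$-compact; one direction is again the inclusion of duals, and for the other, each $x'\in(X,\widetilde{\gamma})'=X_{\mathcal{B},\widetilde{\tau}}'$ is, by (b), a $\beta((X,\tau)',(X,\tau))$-limit of a net in $(X,\widetilde{\tau})'$, hence a uniform limit on the $\tau$-bounded set $K$ of $\sigma(X,(X,\widetilde{\tau})')$-continuous functions, so $x'|_{K}$ is $\sigma(X,(X,\widetilde{\tau})')|_{K}$-continuous.

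The main obstacle is the reverse inclusion in (b): approximating, uniformly on each bounded set, a functional that is only known to be $\widetilde{\tau}$-continuous on bounded sets by globally $\widetilde{\tau}$-continuous functionals. Everything else is bookkeeping with the definitions of $\widetilde{\gamma}$ and $\operatorname{(B\tau B}\widetilde{\gamma})$ plus standard tools (Mackey's theorem, Alaoglu--Bourbaki, Hahn--Banach, completeness of the strong dual of a bornological space). The approximation is handled, after passing to the normed space $X_{B}$, by the Alaoglu--Bourbaki/Hahn--Banach separation argument above, which adapts Cooper's analysis of the mixed topology in \cite[I.1.18 Proposition, I.1.20 Proposition]{cooper1978}.
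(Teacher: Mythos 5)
Your proof is correct; part (a) and the forward direction of (c) coincide with the paper's own argument, but (b) and the converse of (c) take a recognisably different route. For (b) the paper stays in $X$: it chooses an absolutely convex $\widetilde{\tau}$-closed $0$-neighbourhood $U$ with $|u|\leq\varepsilon$ on $U\cap B$ and computes polars in the pairing with $(X,\widetilde{\tau})'$, using $(U\cap B)^{\circ}\subset\overline{\acx(U^{\circ}\cup B^{\circ})}$, Alaoglu--Bourbaki compactness of $U^{\circ}$ and closedness of $B^{\circ}$ to obtain $u\in\varepsilon(U^{\circ}+B^{\circ})$ and hence the approximant $v\in\varepsilon U^{\circ}\subset(X,\widetilde{\tau})'$. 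You instead localise to the normed space $X_{B}$ and prove a self-contained Saks-space lemma, with the compactness supplied by the dual unit ball of $X_{B}$ rather than by $U^{\circ}$; the mechanism (compact plus closed is closed, then a separation/bipolar step forcing membership in the sum) is the same, and your version is closer to Cooper's original I.1.18/I.1.20, which the paper states it adapts. Do note that you need $X_{\mathcal{B},\widetilde{\tau}}'\subset(X,\tau)'$ from (a) just to know $x'|_{X_{B}}$ is $\|\cdot\|_{B}$-bounded, so the order of the parts matters. For (c)$\Leftarrow$ the paper passes to the bidual, observes that $\mathcal{J}(B)$ is equicontinuous and cites the fact that the weak topologies induced by a space and a dense subspace agree on equicontinuous sets; you establish the coincidence of $\sigma(X,(X,\widetilde{\gamma})')$ and $\sigma(X,(X,\widetilde{\tau})')$ on $K$ directly by uniform-on-$K$ approximation from (b). Your route is more elementary and modular; the paper's is shorter because it leans on polar calculus and a textbook reference.
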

\begin{proof}
(a) We start with the proof of $(X,\widetilde{\gamma})'=X_{\mathcal{B},\widetilde{\tau}}'$. 
We have $(X,\widetilde{\gamma})'\subset X_{\mathcal{B},\widetilde{\tau}}'$ because $\widetilde{\gamma}$ coincides with 
$\widetilde{\tau}$ on $\tau$-bounded sets. Let us consider the other inclusion. 
Let $u\in X_{\mathcal{B},\widetilde{\tau}}'$ and $V$ be an absolutely convex $0$-neighbourhood in $\K$. 
For every $B\in\mathcal{B}$ we have
\[
u^{-1}(V)\cap B=u_{\mid B}^{-1}(V)=u_{\mid B}^{-1}(V)\cap B
\]
and $U_{B}\coloneqq u_{\mid B}^{-1}(V)$ is a $0$-neighbourhood for $\widetilde{\tau}$ since $u\in X_{\mathcal{B},\widetilde{\tau}}'$. 
Thus the set $\acx(\bigcup_{B\in\mathcal{B}}U_{B}\cap B)$ is an absolutely convex $0$-neighbourhood
for $\widetilde{\gamma}$ contained in $u^{-1}(V)$, implying $u\in (X,\widetilde{\gamma})'$. 

Due to \prettyref{rem:gen_mixed_top} (c) we have $\widetilde{\gamma}\leq\tau$. 
This implies that $(X,\widetilde{\gamma})'\subset(X,\tau)'$. 
Further, it is easily seen that the $\beta((X,\tau)',(X,\tau))$-limit 
$f\in(X,\tau)'$ of a net $(f_{\iota})_{\iota\in I}$ in $(X,\widetilde{\gamma})'$ belongs to 
$X_{\mathcal{B},\widetilde{\tau}}'$. Due to the first part of (a) we get that $f\in(X,\widetilde{\gamma})'$, which means that 
$(X,\widetilde{\gamma})'$ is closed in $(X,\tau)_{b}'$. The rest follows from the completeness of $(X,\tau)_{b}'$ and the assumption 
that $(X,\widetilde{\gamma})$ satisfies $\operatorname{(B\tau B}\widetilde{\gamma})$.

(b) Next, we show that $(X,\widetilde{\tau})'$ is dense in $(X,\widetilde{\gamma})'$. We know that 
$(X,\widetilde{\tau})'$ is a subspace of $(X,\widetilde{\gamma})'$ by \prettyref{rem:gen_mixed_top} (c). 
Due to $\operatorname{(BBCl)}$ for $\widetilde{\tau}$ the family $\mathcal{B}$ has a basis $\mathcal{B}_{0}$ of 
$\tau$-bounded sets which are absolutely convex and $\widetilde{\tau}$-closed (see \prettyref{rem:BBC_CNC} (a)). 
Let $u\in(X,\widetilde{\gamma})'$, $B\in\mathcal{B}_{0}$ and $\varepsilon>0$. 
Then there is an absolutely convex $\widetilde{\tau}$-closed $0$-neighbourhood 
$U$ in $(X,\widetilde{\tau})$ such that $|u(x)|\leq\varepsilon$ for all $x\in U\cap B$. 
This means that $u\in\varepsilon (U\cap B)^{\circ}$ 
where the polar set is taken w.r.t.~the dual pairing $\langle X,(X,\widetilde{\tau})'\rangle$. 
The set $U$ is absolutely convex and $\widetilde{\tau}$-closed and thus $\sigma(X,(X,\widetilde{\tau})')$-closed by 
\cite[8.2.5 Proposition, p.~149]{jarchow1981}. By the same reasoning the set $B$ is $\sigma(X,(X,\widetilde{\tau})')$-closed. 
Hence we get 
\[
(U\cap B)^{\circ}=\overline{\acx(U^{\circ}\cup B^{\circ})}\subset \overline{(U^{\circ}+ B^{\circ})}
\]
by \cite[8.2.4 Corollary, p.~149]{jarchow1981} where the closures are taken w.r.t.~$\sigma((X,\widetilde{\tau})',X)$. 
The polar set $U^{\circ}$ is $\sigma((X,\widetilde{\tau})',X)$-compact by the Alaoglu--Bourbaki theorem and the polar set 
$B^{\circ}$ is $\sigma((X,\widetilde{\tau})',X)$-closed by \cite[8.2.1 Proposition (a), p.~148]{jarchow1981}. 
Therefore the sum $U^{\circ}+ B^{\circ}$ is $\sigma((X,\widetilde{\tau})',X)$-closed and so  
$(U\cap B)^{\circ}\subset (U^{\circ}+ B^{\circ})$. We deduce that $u\in\varepsilon (U^{\circ}+ B^{\circ})$, which yields that 
there is $v\in\varepsilon U^{\circ}\subset (X,\widetilde{\tau})'$ such that $u-v\in \varepsilon B^{\circ}$, 
i.e.~$|u(x)-v(x)|\leq\varepsilon$ for all $x\in B$.

(c) $\Rightarrow$ This implication follows from $\sigma(X,(X,\widetilde{\tau})')\leq\sigma(X,(X,\widetilde{\gamma})')$, 
the Mackey theorem and that a $\widetilde{\gamma}$-bounded set is $\tau$-bounded because of 
$\operatorname{(B\tau B}\widetilde{\gamma})$. 

$\Leftarrow$ Let $B\subset X$ be $\tau$-bounded. 
Then $B$ is $\widetilde{\gamma}$-bounded because of $\operatorname{(B\tau B}\widetilde{\gamma})$, 
and $\mathcal{J}(B)$ an equicontinuous subset of $((X,\widetilde{\gamma})_{b}')'$ 
by \prettyref{prop:bounded_equicont}. The topologies $\sigma((X,\widetilde{\gamma})_{b}')',(X,\widetilde{\gamma})')$ and 
$\sigma((X,\widetilde{\gamma})_{b}')',(X,\widetilde{\tau})')$ coincide on the equicontinuous set $\mathcal{J}(B)$ by 
\cite[Satz 1.4, p.~16]{kaballo2014} since $(X,\widetilde{\tau})'$ is dense in $(X,\widetilde{\gamma})_{b}'$ by part (b). 
Hence $\sigma(X,(X,\widetilde{\gamma})')$ and $\sigma(X,(X,\widetilde{\tau})')$ coincide on $B$. 
Thus, if $B$ is in addition $\sigma(X,(X,\widetilde{\tau})')$-compact, we get that $B$ is also 
$\sigma(X,(X,\widetilde{\gamma})')$-compact.
\end{proof}

\prettyref{prop:predual_is_dual} (c) in combination with \cite[23.18 Proposition, p.~270]{meisevogt1997} 
and \cite[8.2.5 Proposition, p.~149]{jarchow1981} directly implies the following statement, 
which is a generalisation of \cite[I.1.21 Corollary, p.~16]{cooper1978}.

\begin{cor}\label{cor:semi_reflexive_gen_mixed}
Let $(X,\tau)$ be a bornological locally convex Hausdorff space, $\mathcal{B}$ the family of $\tau$-bounded sets, $\widetilde{\tau}$ 
a locally convex Hausdorff topology on $X$ and $(X,\widetilde{\gamma})$ satisfy $\operatorname{(B\tau B}\widetilde{\gamma})$. 
Then $(X,\widetilde{\gamma})$ is semi-reflexive and $(X,\tau)$ satisfies $\operatorname{(BBCl)}$ for $\widetilde{\tau}$ 
if and only if $\mathcal{B}$ has a basis of absolutely convex $\sigma(X,(X,\widetilde{\tau})')$-compact sets.
\end{cor}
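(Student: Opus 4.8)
The plan is to read the statement as essentially a repackaging of \prettyref{prop:predual_is_dual}~(c), using two standard facts: by \cite[23.18 Proposition, p.~270]{meisevogt1997} a locally convex Hausdorff space is semi-reflexive if and only if each of its bounded sets is relatively compact for its weak topology, and by \cite[8.2.5 Proposition, p.~149]{jarchow1981} an absolutely convex subset of a locally convex Hausdorff space $(X,\rho)$ is $\rho$-closed if and only if it is $\sigma(X,(X,\rho)')$-closed. Throughout I would also use that $\widetilde{\tau}\leq\widetilde{\gamma}\leq\tau$ by \prettyref{rem:gen_mixed_top}~(c), so that a $\widetilde{\tau}$-closed set is $\widetilde{\gamma}$-closed, that $\widetilde{\gamma}$ is Hausdorff by construction, and the equivalence of $\operatorname{(BBCl)}$ for $\widetilde{\tau}$ with $\mathcal{B}$ having a basis of absolutely convex $\widetilde{\tau}$-closed sets from \prettyref{rem:BBC_CNC}~(a).

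For the implication ``$\Leftarrow$'', I would start from a basis $\mathcal{B}_{0}$ of $\mathcal{B}$ consisting of absolutely convex $\sigma(X,(X,\widetilde{\tau})')$-compact sets, and first observe that each such set is $\sigma(X,(X,\widetilde{\tau})')$-closed, hence $\widetilde{\tau}$-closed by \cite[8.2.5 Proposition, p.~149]{jarchow1981}, so that $(X,\tau)$ satisfies $\operatorname{(BBCl)}$ for $\widetilde{\tau}$ by \prettyref{rem:BBC_CNC}~(a). Then \prettyref{prop:predual_is_dual}~(c) applies and upgrades each $B\in\mathcal{B}_{0}$, being $\tau$-bounded and $\sigma(X,(X,\widetilde{\tau})')$-compact, to a $\sigma(X,(X,\widetilde{\gamma})')$-compact set. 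Given any $\widetilde{\gamma}$-bounded $C$, it is $\tau$-bounded by $\operatorname{(B\tau B}\widetilde{\gamma})$, hence contained in some $B\in\mathcal{B}_{0}$, and its $\sigma(X,(X,\widetilde{\gamma})')$-closure is a closed subset of the compact set $B$, hence compact. So every $\widetilde{\gamma}$-bounded set is relatively $\sigma(X,(X,\widetilde{\gamma})')$-compact, and \cite[23.18 Proposition, p.~270]{meisevogt1997} yields that $(X,\widetilde{\gamma})$ is semi-reflexive.

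For the implication ``$\Rightarrow$'', I would start from a basis $\mathcal{B}_{0}$ of $\mathcal{B}$ of absolutely convex $\widetilde{\tau}$-closed sets, available by \prettyref{rem:BBC_CNC}~(a) from $\operatorname{(BBCl)}$ for $\widetilde{\tau}$, and show that each $B\in\mathcal{B}_{0}$ is already $\sigma(X,(X,\widetilde{\tau})')$-compact. Indeed $B$ is $\widetilde{\gamma}$-closed (as $\widetilde{\tau}\leq\widetilde{\gamma}$), hence $\sigma(X,(X,\widetilde{\gamma})')$-closed by \cite[8.2.5 Proposition, p.~149]{jarchow1981}; and $B$ is $\tau$-bounded, hence $\widetilde{\gamma}$-bounded by $\operatorname{(B\tau B}\widetilde{\gamma})$, hence relatively $\sigma(X,(X,\widetilde{\gamma})')$-compact by semi-reflexivity and \cite[23.18 Proposition, p.~270]{meisevogt1997}. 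A $\sigma(X,(X,\widetilde{\gamma})')$-closed relatively $\sigma(X,(X,\widetilde{\gamma})')$-compact set is $\sigma(X,(X,\widetilde{\gamma})')$-compact, and then \prettyref{prop:predual_is_dual}~(c) gives that $B$ is $\sigma(X,(X,\widetilde{\tau})')$-compact, so that $\mathcal{B}_{0}$ is the required basis.

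I do not expect a genuine obstacle here; the only points to watch are the bookkeeping among the three topologies $\widetilde{\tau}\leq\widetilde{\gamma}\leq\tau$ and their weak companions, and making sure that the hypothesis ``$\operatorname{(BBCl)}$ for $\widetilde{\tau}$'' needed to invoke \prettyref{prop:predual_is_dual}~(c) is in hand before it is used, which is part of the hypotheses in ``$\Rightarrow$'' but has to be derived first in ``$\Leftarrow$''.
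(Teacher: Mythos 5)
Your proposal is correct and follows exactly the route the paper intends: the paper's entire proof is the one-line remark that \prettyref{prop:predual_is_dual}~(c) combined with \cite[23.18 Proposition, p.~270]{meisevogt1997} and \cite[8.2.5 Proposition, p.~149]{jarchow1981} directly implies the statement, and your write-up is a careful unpacking of precisely that combination, including the correct observation that in the ``$\Leftarrow$'' direction $\operatorname{(BBCl)}$ for $\widetilde{\tau}$ must first be extracted from the compact basis before \prettyref{prop:predual_is_dual}~(c) may be invoked.
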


\begin{cor}\label{cor:semi_reflexive_BBCl_CNC}
Let $(X,\tau)$ be a bornological locally convex Hausdorff space, $\widetilde{\tau}$ a locally convex Hausdorff topology on $X$ 
and $(X,\widetilde{\gamma})$ satisfy $\operatorname{(B\tau B}\widetilde{\gamma})$ with 
$\widetilde{\gamma}\coloneqq\widetilde{\gamma}(\tau,\widetilde{\tau})$. 
Then the following assertions are equivalent.
\begin{enumerate}
\item[(a)] $(X,\tau)$ satisfies $\operatorname{(BBCl)}$ and $\operatorname{(CNC)}$ 
for $\widetilde{\tau}$ and $(X,\widetilde{\gamma})$ is semi-reflexive.
\item[(b)] $(X,\tau)$ satisfies $\operatorname{(BBC)}$ and $\operatorname{(CNC)}$ for $\sigma(X,(X,\widetilde{\tau})')$.
\end{enumerate}
If one, thus both, of the preceding assertions holds, then we have 
\[
 \widetilde{\gamma}(\tau,\sigma(X,(X,\widetilde{\tau})'))
=\widetilde{\gamma}(\tau,\sigma(X,(X,\widetilde{\gamma})'))
=\tau_{\operatorname{c}}(X,(X,\widetilde{\gamma})_{b}')
\]
and $(X,\widetilde{\gamma}(\tau,\sigma(X,(X,\widetilde{\tau})')))$ satisfies 
$\operatorname{(B\tau B}\widetilde{\gamma}(\tau,\sigma(X,(X,\widetilde{\tau})')))$ as well as 
\[
 (X,\widetilde{\gamma})_{b}'
=(X,\widetilde{\gamma}(\tau,\sigma(X,(X,\widetilde{\tau})')))_{b}'.
\]
\end{cor}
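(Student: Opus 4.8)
The equivalence (a)$\Leftrightarrow$(b) I would establish by decoupling it into a ``$\operatorname{(BBCl)}$/semi-reflexivity'' part and a ``$\operatorname{(CNC)}$'' part. First I would record that, by \prettyref{rem:gen_mixed_top} (c) and the Mackey--Arens theorem, $\sigma(X,(X,\widetilde{\tau})')\leq\widetilde{\tau}\leq\widetilde{\gamma}\leq\tau$, that $(X,\sigma(X,(X,\widetilde{\tau})'))'=(X,\widetilde{\tau})'$, and that $\sigma(X,(X,\widetilde{\tau})')\leq\widetilde{\tau}\leq\mu(X,(X,\widetilde{\tau})')$. Now \prettyref{cor:semi_reflexive_gen_mixed} (whose hypotheses are exactly the standing assumptions) says that ``$(X,\widetilde{\gamma})$ is semi-reflexive and $(X,\tau)$ satisfies $\operatorname{(BBCl)}$ for $\widetilde{\tau}$'' is equivalent to ``$\mathcal{B}$ has a basis of absolutely convex $\sigma(X,(X,\widetilde{\tau})')$-compact sets'', which by \prettyref{rem:BBC_CNC} (b) is equivalent to ``$(X,\tau)$ satisfies $\operatorname{(BBC)}$ for $\sigma(X,(X,\widetilde{\tau})')$''; and \prettyref{rem:BBC_CNC} (f), applied in both directions using the relations just recorded, shows that ``$(X,\tau)$ satisfies $\operatorname{(CNC)}$ for $\widetilde{\tau}$'' is equivalent to ``$(X,\tau)$ satisfies $\operatorname{(CNC)}$ for $\sigma(X,(X,\widetilde{\tau})')$''. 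Conjoining the two equivalences gives (a)$\Leftrightarrow$(b).

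For the remaining assertions I would assume (a) and (b) and set $\delta\coloneqq\sigma(X,(X,\widetilde{\tau})')$. Since $(X,\tau)$ satisfies $\operatorname{(BBCl)}$ for $\widetilde{\tau}$, the proof of \prettyref{prop:predual_is_dual} (b), (c) shows that $(X,\widetilde{\tau})'$ is dense in $(X,\widetilde{\gamma})_{b}'$ and that $\sigma(X,(X,\widetilde{\gamma})')$ and $\delta$ coincide on every $\tau$-bounded set. Combining the latter with \prettyref{rem:gen_mixed_top} (b) immediately yields the first claimed identity $\widetilde{\gamma}(\tau,\delta)=\widetilde{\gamma}(\tau,\sigma(X,(X,\widetilde{\gamma})'))$.

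Next I would identify $W\coloneqq(X,\widetilde{\gamma})_{b}'$. By \prettyref{prop:predual_is_dual} (a) we have $(X,\widetilde{\gamma})'=X_{\mathcal{B},\widetilde{\tau}}'$ and $W=(X_{\mathcal{B},\widetilde{\tau}}',\beta)$; moreover $X_{\mathcal{B},\widetilde{\tau}}'=X_{\mathcal{B},\delta}'$, the inclusion $\supseteq$ being clear from $\delta\leq\widetilde{\tau}$, while for $\subseteq$ any $x^{\star}\in X_{\mathcal{B},\widetilde{\tau}}'=(X,\widetilde{\gamma})'$ restricts on each $\tau$-bounded $B$ to a $\sigma(X,(X,\widetilde{\gamma})')_{\mid B}$-continuous, hence (by the coincidence above) $\delta_{\mid B}$-continuous, map. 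Since the strong topologies $\beta(X_{\mathcal{B},\widetilde{\tau}}',(X,\tau))$ and $\beta(X_{\mathcal{B},\delta}',(X,\tau))$ are literally the same, $W=(X_{\mathcal{B},\delta}',\beta(X_{\mathcal{B},\delta}',(X,\tau)))$, which by (b) and \prettyref{thm:general_dixmier_ng} (with $\delta$ in place of $\widetilde{\tau}$) is a complete barrelled locally convex Hausdorff space whose strong dual equals $(X,\tau)$ via the evaluation map $\mathcal{I}$.

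Finally I would apply \prettyref{ex:gen_mixed_dual} to the complete barrelled space $W$ and transport everything along the topological isomorphism $\mathcal{I}\colon(X,\tau)\to W_{b}'$, which identifies $X$ with $W'=(X,\widetilde{\gamma})'$ (with $\mathcal{I}(x)$ the evaluation at $x$). Under this identification $\beta(W',W)$ corresponds to $\tau$, $\sigma(W',W)$ to $\sigma(X,(X,\widetilde{\gamma})')$, $\tau_{\operatorname{c}}(W',W)$ to $\tau_{\operatorname{c}}(X,(X,\widetilde{\gamma})_{b}')$, and the evaluation $\mathcal{J}_{W}$ becomes the identity on $(X,\widetilde{\gamma})'$; hence \prettyref{ex:gen_mixed_dual} delivers $\widetilde{\gamma}(\tau,\sigma(X,(X,\widetilde{\gamma})'))=\tau_{\operatorname{c}}(X,(X,\widetilde{\gamma})_{b}')$, the property $\operatorname{(B\tau B}\widetilde{\gamma}(\tau,\sigma(X,(X,\widetilde{\gamma})')))$, and $(X,\widetilde{\gamma})_{b}'=(X,\widetilde{\gamma}(\tau,\sigma(X,(X,\widetilde{\gamma})')))_{b}'$. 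Together with the first identity $\widetilde{\gamma}(\tau,\delta)=\widetilde{\gamma}(\tau,\sigma(X,(X,\widetilde{\gamma})'))$ this gives all the stated conclusions. I expect the main obstacle to be the bookkeeping in these last two steps: one must verify carefully that $X_{\mathcal{B},\widetilde{\tau}}'=X_{\mathcal{B},\delta}'$ — this genuinely uses the density and the coincidence of weak topologies on bounded sets from \prettyref{prop:predual_is_dual}, and is what lets \prettyref{thm:general_dixmier_ng} produce barrelledness of $W$ even though $\operatorname{(BBC)}$ for $\widetilde{\tau}$ itself may fail — and that the various dual pairings really match up under $\mathcal{I}$.
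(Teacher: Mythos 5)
Your proof is correct. The equivalence (a)$\Leftrightarrow$(b) and the first identity $\widetilde{\gamma}(\tau,\sigma(X,(X,\widetilde{\tau})'))=\widetilde{\gamma}(\tau,\sigma(X,(X,\widetilde{\gamma})'))$ are obtained exactly as in the paper: the same split into a $\operatorname{(CNC)}$ transfer between $\widetilde{\tau}$ and $\sigma(X,(X,\widetilde{\tau})')$ (the paper argues the reverse direction directly from ``$\sigma$-closed implies $\widetilde{\tau}$-closed'' rather than re-invoking \prettyref{rem:BBC_CNC} (f), but both are fine) and an appeal to \prettyref{cor:semi_reflexive_gen_mixed}, followed by the coincidence of the two weak topologies on bounded sets from the proof of \prettyref{prop:predual_is_dual} (c) plus \prettyref{rem:gen_mixed_top} (b). Where you genuinely diverge is in the remaining identities: the paper works directly on $X$, using semi-reflexivity of $(X,\widetilde{\gamma})$ together with \prettyref{prop:bounded_equicont}, the completeness of $(X,\widetilde{\gamma})_{b}'$ and K\"othe's characterisation of $\tau_{\operatorname{c}}$ to identify $\widetilde{\gamma}(\tau,\sigma(X,(X,\widetilde{\gamma})'))$ with $\tau_{\operatorname{c}}(X,(X,\widetilde{\gamma})_{b}')$, and then finishes with the Mackey theorem and Mackey--Arens. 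You instead realise $(X,\tau)$ as the strong dual of the complete barrelled space $W=(X,\widetilde{\gamma})_{b}'=(X_{\mathcal{B},\delta}',\beta)$ via \prettyref{thm:general_dixmier_ng} and transport \prettyref{ex:gen_mixed_dual} along $\mathcal{I}$. This buys you a shorter argument that reuses \prettyref{ex:gen_mixed_dual} (which already packages the K\"othe/Treves reasoning) instead of repeating it, at the price of the extra bookkeeping you flag yourself: the verification $X_{\mathcal{B},\widetilde{\tau}}'=X_{\mathcal{B},\delta}'$ (your two-inclusion argument is sound and correctly avoids \prettyref{rem:predual_independent}, which is unavailable since only $\operatorname{(BBCl)}$ holds for $\widetilde{\tau}$) and the matching of the pairings $\beta(W',W)\leftrightarrow\tau$, $\sigma(W',W)\leftrightarrow\sigma(X,(X,\widetilde{\gamma})')$, $\tau_{\operatorname{c}}(W',W)\leftrightarrow\tau_{\operatorname{c}}(X,(X,\widetilde{\gamma})_{b}')$ under $\mathcal{I}$, all of which check out.
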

\begin{proof}
First, we show that $(X,\tau)$ satisfies $\operatorname{(CNC)}$ for $\widetilde{\tau}$ if and only if 
$(X,\tau)$ satisfies $\operatorname{(CNC)}$ for $\sigma(X,(X,\widetilde{\tau})')$. 
Let $(X,\tau)$ satisfy $\operatorname{(CNC)}$ for $\widetilde{\tau}$. Then $(X,\tau)$ satisfies $\operatorname{(CNC)}$ for 
$\sigma(X,(X,\widetilde{\tau})')$ by \prettyref{rem:BBC_CNC} (f) with $\widetilde{\tau}_{0}\coloneqq \sigma(X,(X,\widetilde{\tau})')$.
On the other hand, let $(X,\tau)$ satisfy $\operatorname{(CNC)}$ for $\sigma(X,(X,\widetilde{\tau})')$. 
Then $\tau$ has a $0$-neighbourhood basis $\mathcal{U}_{0}$ of absolutely convex $\sigma(X,(X,\widetilde{\tau})')$-closed sets. 
Since $\sigma(X,(X,\widetilde{\tau})')\leq \widetilde{\tau}$, the elements of $\mathcal{U}_{0}$ are also 
$\widetilde{\tau}$-closed. Thus $(X,\tau)$ satisfies $\operatorname{(CNC)}$ for $\widetilde{\tau}$. 

Now, the equivalence (a)$\Leftrightarrow$(b) follows from the observation above and \prettyref{cor:semi_reflexive_gen_mixed}. 
Let one of the assertions (a) or (b), thus both, hold. By the proof of \prettyref{prop:predual_is_dual} (c) we know that 
$\sigma(X,(X,\widetilde{\tau})')$ and $\sigma(X,(X,\widetilde{\gamma})')$ coincide on $\tau$-bounded sets. 
Due to \prettyref{rem:gen_mixed_top} (b) we obtain that 
\[
\widetilde{\gamma}(\tau,\sigma(X,(X,\widetilde{\tau})'))
=\widetilde{\gamma}(\tau,\sigma(X,(X,\widetilde{\gamma})')).
\]
Since $(X,\widetilde{\gamma})$ is semi-reflexive, all equicontinuous subsets of $((X,\widetilde{\gamma})_{b}')'$ are of the form 
$\mathcal{J}(B)$ for some $\widetilde{\gamma}$-bounded set $B\subset X$, and all 
$\widetilde{\gamma}$-bounded subsets of $X$ are of the form $\mathcal{J}^{-1}(A)$ for some equicontinuous set 
$A\subset((X,\widetilde{\gamma})_{b}')'$ by \prettyref{prop:bounded_equicont}. 
Now, the completeness of $(X,\widetilde{\gamma})_{b}'$ by \prettyref{prop:predual_is_dual} (a) and 
\cite[\S 21.9, (7), p.~271]{koethe1969} imply that $\tau_{\operatorname{c}}(\mathcal{J}(X),(X,\widetilde{\gamma})_{b}')$ 
is the finest locally convex Hausdorff topology that coincides with $\sigma(\mathcal{J}(X),(X,\widetilde{\gamma})')$ 
on the equicontinuous subsets of $\mathcal{J}(X)=((X,\widetilde{\gamma})_{b}')'$. 
Due to $\operatorname{(B\tau B}\widetilde{\gamma})$ this yields
\[
 \widetilde{\gamma}(\tau,\sigma(X,(X,\widetilde{\gamma})'))
=\tau_{\operatorname{c}}(X,(X,\widetilde{\gamma})_{b}').
\]
Further, the Mackey theorem in combination with $(X,\widetilde{\gamma})$ satisfying $\operatorname{(B\tau B}\widetilde{\gamma})$ 
yields that a subset of $X$ is $\widetilde{\gamma}(\tau,\sigma(X,(X,\widetilde{\tau})'))$-bounded if and only 
if it is $\widetilde{\gamma}$-bounded if and only if it is $\tau$-bounded. 
This implies that the space $(X,\widetilde{\gamma}(\tau,\sigma(X,(X,\widetilde{\tau})')))$ satisfies 
$\operatorname{(B\tau B}\widetilde{\gamma}(\tau,\sigma(X,(X,\widetilde{\tau})')))$. 
The rest of the statement follows from the Mackey--Arens theorem.
\end{proof}

Combining \prettyref{thm:general_dixmier_ng}, \prettyref{cor:semi_reflexive_BBCl_CNC} and \prettyref{prop:predual_is_dual} (a), 
we get the biduality we were aiming for.

\begin{cor}\label{cor:general_dixmier_ng_semi_reflexive}
Let $(X,\tau)$ be a bornological locally convex Hausdorff space satisfying $\operatorname{(BBCl)}$ and $\operatorname{(CNC)}$ for some 
$\widetilde{\tau}$ and $(X,\widetilde{\gamma})$ a semi-reflexive space satisfying $\operatorname{(B\tau B}\widetilde{\gamma})$. 
Then $(X,\widetilde{\gamma})_{b}'$ is a complete barrelled locally convex Hausdorff space and the evaluation map 
\[
\mathcal{I}\colon(X,\tau)\to ((X,\widetilde{\gamma})_{b}')_{b}',\; x\longmapsto [x'\mapsto x'(x)], 
\]
is a topological isomorphism. In particular, $(X,\tau)$ is a bidual space with semi-reflexive prebidual 
$((X,\widetilde{\gamma}),\mathcal{I})$. 
\end{cor}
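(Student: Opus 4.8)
The plan is to reduce the statement to \prettyref{thm:general_dixmier_ng} by first replacing the auxiliary topology $\widetilde{\tau}$ with its weak topology $\sigma(X,(X,\widetilde{\tau})')$, and then to identify the resulting predual with $(X,\widetilde{\gamma})_{b}'$ using \prettyref{prop:predual_is_dual}.

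First I would apply the implication (a)$\Rightarrow$(b) of \prettyref{cor:semi_reflexive_BBCl_CNC}: since $(X,\tau)$ is bornological, satisfies $\operatorname{(BBCl)}$ and $\operatorname{(CNC)}$ for $\widetilde{\tau}$, and $(X,\widetilde{\gamma})$ is semi-reflexive and satisfies $\operatorname{(B\tau B}\widetilde{\gamma})$, we obtain that $(X,\tau)$ satisfies $\operatorname{(BBC)}$ and $\operatorname{(CNC)}$ for $\sigma(X,(X,\widetilde{\tau})')$. From the additional part of \prettyref{cor:semi_reflexive_BBCl_CNC} I would also record that $(X,\widetilde{\gamma}(\tau,\sigma(X,(X,\widetilde{\tau})')))$ satisfies $\operatorname{(B\tau B}\widetilde{\gamma}(\tau,\sigma(X,(X,\widetilde{\tau})')))$ and that
\[
(X,\widetilde{\gamma})_{b}'=(X,\widetilde{\gamma}(\tau,\sigma(X,(X,\widetilde{\tau})')))_{b}'.
\]

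Next I would invoke \prettyref{thm:general_dixmier_ng} with $\widetilde{\tau}$ replaced by $\sigma(X,(X,\widetilde{\tau})')$: this yields that $(X_{\mathcal{B},\sigma(X,(X,\widetilde{\tau})')}',\beta)$ is a complete barrelled locally convex Hausdorff space and that the evaluation map $\mathcal{I}\colon(X,\tau)\to (X_{\mathcal{B},\sigma(X,(X,\widetilde{\tau})')}',\beta)_{b}'$ is a topological isomorphism. To rewrite the target space, I would apply \prettyref{prop:predual_is_dual} (a), again with $\widetilde{\tau}$ replaced by $\sigma(X,(X,\widetilde{\tau})')$ — which is legitimate because $(X,\widetilde{\gamma}(\tau,\sigma(X,(X,\widetilde{\tau})')))$ satisfies the $\operatorname{(B\tau B})$ condition by the previous step — obtaining $(X,\widetilde{\gamma}(\tau,\sigma(X,(X,\widetilde{\tau})')))_{b}'=(X_{\mathcal{B},\sigma(X,(X,\widetilde{\tau})')}',\beta)$. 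Combining this with the identity of strong duals recorded above gives $(X,\widetilde{\gamma})_{b}'=(X_{\mathcal{B},\sigma(X,(X,\widetilde{\tau})')}',\beta)$; substituting back then shows that $(X,\widetilde{\gamma})_{b}'$ is complete barrelled and that $\mathcal{I}$ — which is literally the evaluation map once the duals are identified — is a topological isomorphism onto $((X,\widetilde{\gamma})_{b}')_{b}'$. Since $(X,\widetilde{\gamma})$ is semi-reflexive by hypothesis, the tuple $((X,\widetilde{\gamma}),\mathcal{I})$ is then a semi-reflexive prebidual of $(X,\tau)$.

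The only delicate point is the bookkeeping required to feed each quoted result exactly the hypothesis it needs — in particular that one genuinely gains $\operatorname{(BBC)}$, not merely $\operatorname{(BBCl)}$, upon passing from $\widetilde{\tau}$ to $\sigma(X,(X,\widetilde{\tau})')$, that the corresponding mixed topology $\widetilde{\gamma}(\tau,\sigma(X,(X,\widetilde{\tau})'))$ has the $\operatorname{(B\tau B})$ property, and that the two evaluation maps appearing in \prettyref{thm:general_dixmier_ng} and in the statement agree under the identification of preduals. I do not expect a genuine obstacle beyond this, since the analytic content has already been isolated in \prettyref{prop:predual_is_dual}, \prettyref{cor:semi_reflexive_BBCl_CNC} and \prettyref{thm:general_dixmier_ng}.
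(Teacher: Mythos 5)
Your proposal is correct and follows exactly the paper's route: the paper's proof is precisely the combination of \prettyref{thm:general_dixmier_ng}, \prettyref{cor:semi_reflexive_BBCl_CNC} and \prettyref{prop:predual_is_dual}~(a), and your bookkeeping — passing from $\operatorname{(BBCl)}$ for $\widetilde{\tau}$ to $\operatorname{(BBC)}$ for $\sigma(X,(X,\widetilde{\tau})')$, then identifying $(X,\widetilde{\gamma})_{b}'$ with $(X_{\mathcal{B},\sigma(X,(X,\widetilde{\tau})')}',\beta)$ — is the intended way to carry it out.
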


Next, we show that $\operatorname{(BBC)}$ and $\operatorname{(CNC)}$ 
in \prettyref{thm:general_dixmier_ng} are also necessary conditions for the existence of a 
complete barrelled predual. 
Let $X$ be a dual space with predual $(Y,\varphi)$. We define two locally convex Hausdorff topology on $X$ w.r.t.~the dual paring 
$\langle X,Y,\varphi\rangle$. 
We define the systems of seminorms 
\[
p_{N}(x)\coloneqq\sup_{y\in N}|\varphi(x)(y)|,\quad x\in X,
\]
for finite resp.~compact sets $N\subset Y$, which induce two locally convex Hausdorff topologies on $X$ w.r.t.~the dual paring 
$\langle X,Y,\varphi\rangle$ and we denote these topologies by $\sigma_{\varphi}(X,Y)$ 
resp.~$\tau_{\operatorname{c},\varphi}(X,Y)$. 
We recall from \prettyref{ex:gen_mixed_dual} that the evaluation map 
$\mathcal{J}_{Y}\colon Y\to (Y',\tau_{\operatorname{c}}(Y',Y))_{b}'$, $y\longmapsto[y'\mapsto y'(y)]$, is a topological 
isomorphism if $Y$ is complete and barrelled.

\begin{prop}\label{prop:predual_to_BBC_CNC}
Let $(X,\tau)$ be a dual space with complete barrelled predual $(Y,\varphi)$ 
and $\mathcal{B}$ the family of $\tau$-bounded sets. Then the following assertions hold.
\begin{enumerate}
\item[(a)] $(X,\tau)$ satisfies $\operatorname{(BBC)}$ and $\operatorname{(CNC)}$ for $\sigma_{\varphi}(X,Y)$, 
\[
\widetilde{\gamma}_{\varphi}\coloneqq\widetilde{\gamma}(\tau,\sigma_{\varphi}(X,Y))=\tau_{\operatorname{c},\varphi}(X,Y)
\]
and $(X,\widetilde{\gamma}_{\varphi})$ satisfies $(\operatorname{B}\tau \operatorname{B}\widetilde{\gamma}_{\varphi})$. 
\item[(b)] The three maps 
\[
\kappa_{\varphi}\colon (Y',\tau_{\operatorname{c}}(Y',Y))_{b}'\to (X,\widetilde{\gamma}_{\varphi})_{b}',\;
y''\mapsto y''\circ\varphi,
\]
and $\kappa_{\varphi}\circ \mathcal{J}_{Y}\colon Y\to (X,\widetilde{\gamma}_{\varphi})_{b}'$ as well as
\[
\mathcal{I}_{\varphi}\colon(X,\tau)\to ((X,\widetilde{\gamma}_{\varphi})_{b}')_{b}',\; x\longmapsto [x'\mapsto x'(x)], 
\]
are topological isomorphisms with $(\kappa_{\varphi}\circ\mathcal{J}_{Y})^{t}=\varphi\circ\mathcal{I}_{\varphi}^{-1}$ and
\[
 (X,\widetilde{\gamma}_{\varphi})_{b}'
=(X_{\mathcal{B},\sigma_{\varphi}(X,Y)}',\beta(X_{\mathcal{B},\sigma_{\varphi}(X,Y)}',(X,\tau))).
\]
Here, $(\kappa_{\varphi}\circ\mathcal{J}_{Y})^{t}$ denotes the dual map of $\kappa_{\varphi}\circ\mathcal{J}_{Y}$.
\item[(c)] Let $x'\colon X\to\K$. Then $x'\in (X,\widetilde{\gamma}_{\varphi})'$ if and only if $x'\in (X,\sigma_{\varphi}(X,Y))'$ 
if and only if there is a (unique) $y\in Y$ such that $x'=\varphi(\cdot)(y)$.
\end{enumerate}
\end{prop}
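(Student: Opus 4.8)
The plan is to bootstrap everything from \prettyref{ex:gen_mixed_dual} applied to the complete barrelled space $Y$, transported to $X$ along $\varphi$. First I would record the basic structure: since $(Y,\varphi)$ is a predual, $\varphi\colon X\to Y_{b}'$ is a topological isomorphism, so it carries $\tau$ to $\beta(Y',Y)$ and induces a bijection between $\mathcal{B}$ and the $\beta(Y',Y)$-bounded (equivalently, by barrelledness of $Y$, the $Y$-equicontinuous) subsets of $Y'$. Under $\varphi$, the seminorm $p_N$ is precisely the seminorm $y'\mapsto\sup_{y\in N}|y'(y)|$ on $Y'$, so $\sigma_\varphi(X,Y)$ corresponds to $\sigma(Y',Y)$ and $\tau_{\operatorname{c},\varphi}(X,Y)$ to $\tau_{\operatorname{c}}(Y',Y)$. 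Also $(X,\tau)$ is bornological, being topologically isomorphic to the strong dual of the complete barrelled (hence quasi-barrelled) space $Y$, which is bornological by \cite[Corollary 24.24, p.~286]{meisevogt1997} or directly as $Y$ is barrelled; I would cite the relevant fact.

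For part (a): by \prettyref{ex:gen_mixed_dual} applied to $Y$, the space $(Y',\tau_{\operatorname{c}}(Y',Y))$ equals $\widetilde{\gamma}(\beta(Y',Y),\sigma(Y',Y))$, satisfies $(\operatorname{B}\beta(Y',Y)\operatorname{B}\widetilde{\gamma})$, and $(Y',\beta(Y',Y))$ satisfies $\operatorname{(BBC)}$ for $\sigma(Y',Y)$; moreover $(Y',\beta(Y',Y))$ trivially satisfies $\operatorname{(CNC)}$ for $\sigma(Y',Y)$ since its absolutely convex $\beta$-closed $0$-neighbourhoods are polars of equicontinuous sets, which are $\sigma(Y',Y)$-closed by the bipolar theorem. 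Transporting all of this along the topological isomorphism $\varphi$ gives exactly the assertions of (a): $(X,\tau)$ satisfies $\operatorname{(BBC)}$ and $\operatorname{(CNC)}$ for $\sigma_\varphi(X,Y)$, $\widetilde{\gamma}_\varphi=\tau_{\operatorname{c},\varphi}(X,Y)$, and $(X,\widetilde{\gamma}_\varphi)$ satisfies $(\operatorname{B}\tau\operatorname{B}\widetilde{\gamma}_\varphi)$ — here I would note that $\widetilde{\gamma}$ and the $(\operatorname{B}\tau\operatorname{B}\widetilde{\gamma})$-property are preserved under topological isomorphism, which is immediate from their definitions.

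For part (b): I would combine \prettyref{ex:gen_mixed_dual} (giving $\mathcal{J}_Y\colon Y\to(Y',\tau_{\operatorname{c}}(Y',Y))_{b}'$ a topological isomorphism) with \prettyref{prop:predual_is_dual} (a) applied to $(X,\tau)$ with $\widetilde{\tau}=\sigma_\varphi(X,Y)$, which gives $(X,\widetilde{\gamma}_\varphi)_{b}'=(X_{\mathcal{B},\sigma_\varphi(X,Y)}',\beta)$ and its completeness. The map $\kappa_\varphi$ is just the dual isomorphism $(\varphi^{-1})^t$ restricted appropriately: since $\varphi\colon X\to Y_{b}'$ is a topological isomorphism intertwining $\widetilde{\gamma}_\varphi$ with $\tau_{\operatorname{c}}(Y',Y)$, its transpose maps $(Y',\tau_{\operatorname{c}}(Y',Y))_{b}'$ isomorphically onto $(X,\widetilde{\gamma}_\varphi)_{b}'$; an explicit check shows this transpose sends $y''$ to $y''\circ\varphi$, which is $\kappa_\varphi$. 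Then $\kappa_\varphi\circ\mathcal{J}_Y$ is a composition of topological isomorphisms, and $\mathcal{I}_\varphi$ is a topological isomorphism either by \prettyref{thm:general_dixmier_ng} (using part (a)) or by bidualising $\kappa_\varphi\circ\mathcal{J}_Y$. The identity $(\kappa_\varphi\circ\mathcal{J}_Y)^t=\varphi\circ\mathcal{I}_\varphi^{-1}$ I would verify by a direct pairing computation: for $x\in X$ and $y\in Y$,
\[
(\kappa_\varphi\circ\mathcal{J}_Y)^t(\mathcal{I}_\varphi(x))(y)
=\mathcal{I}_\varphi(x)\bigl((\kappa_\varphi\circ\mathcal{J}_Y)(y)\bigr)
=(\kappa_\varphi\circ\mathcal{J}_Y)(y)(x)
=\mathcal{J}_Y(y)(\varphi(x))
=\varphi(x)(y),
\]
so $(\kappa_\varphi\circ\mathcal{J}_Y)^t\circ\mathcal{I}_\varphi=\varphi$, as claimed.

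For part (c): the equivalence $x'\in(X,\widetilde{\gamma}_\varphi)'\Leftrightarrow x'\in(X,\sigma_\varphi(X,Y))'$ follows from \prettyref{prop:predual_is_dual} (a) (which identifies $(X,\widetilde{\gamma}_\varphi)'$ with $X_{\mathcal{B},\sigma_\varphi(X,Y)}'$) together with the fact that, under $\varphi$, both describe exactly the functionals on $Y'$ that are $\sigma(Y',Y)$-continuous on each equicontinuous set — and since $Y$ is complete, Grothendieck's completeness theorem identifies these with $\mathcal{J}_Y(Y)$, i.e.\ with evaluations at points of $Y$. Concretely, $x'=\varphi(\cdot)(y)$ for a (unique, by Hausdorffness of $Y$ and density considerations, or simply since $\varphi$ is onto $Y_{b}'$) $y\in Y$. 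I would phrase this as: $\kappa_\varphi\circ\mathcal{J}_Y$ being onto $(X,\widetilde{\gamma}_\varphi)'$ by part (b) means every $x'\in(X,\widetilde{\gamma}_\varphi)'$ equals $(\kappa_\varphi\circ\mathcal{J}_Y)(y)=\varphi(\cdot)(y)$ for some $y\in Y$, and conversely each such functional is $\sigma_\varphi(X,Y)$-continuous by definition of the seminorms $p_N$ with $N=\{y\}$.

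The main obstacle I anticipate is purely organisational rather than mathematical: making the transport-along-$\varphi$ argument precise enough that \prettyref{ex:gen_mixed_dual} and \prettyref{prop:predual_is_dual} can be invoked verbatim, in particular checking that $\varphi$ really does intertwine $\widetilde{\gamma}_\varphi$ with $\tau_{\operatorname{c}}(Y',Y)$ — this rests on \prettyref{rem:gen_mixed_top} (b) (independence of $\widetilde{\gamma}$ under replacing $\widetilde{\tau}$ by a topology agreeing on bounded sets) and on the fact that $\sigma_\varphi(X,Y)$ corresponds to $\sigma(Y',Y)$, after which everything is functorial. A secondary subtlety is confirming $(X,\tau)$ is bornological so that the hypotheses of \prettyref{prop:predual_is_dual} are met; this follows since strong duals of barrelled spaces are bornological, or alternatively it is forced by $(X,\tau)$ satisfying $\operatorname{(BBC)}$ (see \prettyref{rem:BBC_CNC} (c)) once part (a) is in hand — so a brief forward reference suffices.
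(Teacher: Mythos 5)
Your overall route---transporting \prettyref{ex:gen_mixed_dual} along $\varphi$ for part (a), the explicit pairing computations for part (b), and identifying $(X,\widetilde{\gamma}_{\varphi})'$ with evaluations at points of $Y$ for part (c)---is essentially the paper's own proof. There is, however, one genuine flaw: your claim that $(X,\tau)$ is bornological because it is (topologically isomorphic to) the strong dual of the complete barrelled space $Y$. This is false in general: any non-distinguished Fr\'echet space $Y$ is complete and barrelled, yet $Y_{b}'$ is not bornological. Your proposed fallback does not repair this: \prettyref{rem:BBC_CNC} (c) is stated under the standing hypothesis that $(X,\tau)$ is bornological, so $\operatorname{(BBC)}$ alone does not force bornologicity---indeed $Y_{b}'$ for non-distinguished Fr\'echet $Y$ satisfies $\operatorname{(BBC)}$ for $\sigma(Y',Y)$ by the Alaoglu--Bourbaki theorem but is not bornological. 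The error matters because you use bornologicity to invoke \prettyref{prop:predual_is_dual} (a) ``verbatim'' for the identity $(X,\widetilde{\gamma}_{\varphi})_{b}'=(X_{\mathcal{B},\sigma_{\varphi}(X,Y)}',\beta)$ and its completeness, and to invoke \prettyref{thm:general_dixmier_ng} for $\mathcal{I}_{\varphi}$; both of those results carry a bornological hypothesis that is not available here.

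The gap is repairable, and the paper repairs it explicitly: the only place the proof of \prettyref{prop:predual_is_dual} (a) uses bornologicity is to obtain $\widetilde{\gamma}\leq\tau$ via \prettyref{prop:bornological_comp_top}, and in the present situation $\widetilde{\gamma}_{\varphi}=\tau_{\operatorname{c},\varphi}(X,Y)\leq\tau$ holds for free, so the identity $(X,\widetilde{\gamma}_{\varphi})_{b}'=(X_{\mathcal{B},\sigma_{\varphi}(X,Y)}',\beta)$ follows from $(\operatorname{B}\tau\operatorname{B}\widetilde{\gamma}_{\varphi})$ alone; likewise $\mathcal{I}_{\varphi}$ should be obtained by bidualising $\kappa_{\varphi}\circ\mathcal{J}_{Y}$ (your second alternative, which is what the paper does via the auxiliary map $\psi$), not from \prettyref{thm:general_dixmier_ng}. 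So you should drop the bornologicity claim and keep only those two alternatives; everything else, including the transpose computation and the Grothendieck-type argument in (c), is sound. (A minor slip in (a): the $0$-neighbourhoods of $Y_{b}'$ are polars of bounded subsets of $Y$, not of equicontinuous sets; since polars are always $\sigma(Y',Y)$-closed, the $\operatorname{(CNC)}$ conclusion is unaffected.)
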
 
\begin{proof}
(a) The space $Y_{b}'$ satisfies $\operatorname{(BBC)}$ and $\operatorname{(CNC)}$ for $\sigma(Y',Y)$ 
by \cite[p.~116]{bierstedt1992}, which directly yields that $(X,\tau)$ satisfies $\operatorname{(BBC)}$ and $\operatorname{(CNC)}$ 
for $\sigma_{\varphi}(X,Y)$. Due to \prettyref{ex:gen_mixed_dual} the evaluation map $\mathcal{J}_{Y}$ is a topological 
isomorphism and we have 
\[
\widetilde{\gamma}\coloneqq\widetilde{\gamma}(\beta(Y',Y),\sigma(Y',Y))=\tau_{\operatorname{c}}(Y',Y)
\]
as well as that $(Y',\widetilde{\gamma})$ satisfies $(\operatorname{B}\beta(Y',Y)\operatorname{B}\widetilde{\gamma})$, which implies 
\[
\widetilde{\gamma}_{\varphi}=\widetilde{\gamma}(\tau,\sigma_{\varphi}(X,Y))=\tau_{\operatorname{c},\varphi}(X,Y)
\]
and that $(X,\widetilde{\gamma}_{\varphi})$ satisfies $(\operatorname{B}\tau \operatorname{B}\widetilde{\gamma}_{\varphi})$. 

(b) The map $\kappa_{\varphi}$ is a topological isomorphism since $\varphi$ is a topological isomorphism and 
$\widetilde{\gamma}_{\varphi}=\tau_{\operatorname{c},\varphi}(X,Y)$ by part (a). Thus $\kappa_{\varphi}\circ \mathcal{J}_{Y}$ is a 
topological isomorphism as well. It follows that the map $\psi\circ\varphi \colon(X,\tau)\to ((X,\widetilde{\gamma}_{\varphi})_{b}')_{b}'$ with
\[
\psi\colon Y_{b}'\to ((X,\widetilde{\gamma}_{\varphi})_{b}')_{b}',\;
\psi(y')\coloneqq y'\circ (\kappa_{\varphi}\circ\mathcal{J}_{Y})^{-1},
\]
is a topological isomorphism. Let $x'\in(X,\widetilde{\gamma}_{\varphi})'$. Then we have 
$(\kappa_{\varphi}\circ\mathcal{J}_{Y})^{-1}(x')=\mathcal{J}_{Y}^{-1}(x'\circ\varphi^{-1})$ and 
\begin{align*}
 (\psi\circ\varphi)(x)(x')
&=\varphi(x)(\mathcal{J}_{Y}^{-1}(x'\circ\varphi^{-1}))
 =\mathcal{J}_{Y}(\mathcal{J}_{Y}^{-1}(x'\circ\varphi^{-1}))(\varphi(x))\\
&=(x'\circ\varphi^{-1})(\varphi(x))
 =x'(x)
 =\mathcal{I}_{\varphi}(x)(x')
\end{align*}
for all $x\in X$, proving that the map $\mathcal{I}_{\varphi}$ is a topological isomorphism.  
Looking at the proof of \prettyref{prop:predual_is_dual} (a), we see that  
\[
 (X,\widetilde{\gamma}_{\varphi})_{b}'
=(X_{\mathcal{B},\sigma_{\varphi}(X,Y)}',\beta(X_{\mathcal{B},\sigma_{\varphi}(X,Y)}',(X,\tau))).
\]
holds due to $(X,\widetilde{\gamma}_{\varphi})$ satisfying $(\operatorname{B}\tau \operatorname{B}\widetilde{\gamma}_{\varphi})$ 
even without the assumption that $(X,\tau)$ is bornological. 

Next, we show that $(\kappa_{\varphi}\circ\mathcal{J}_{Y})^{t}=\varphi\circ\mathcal{I}_{\varphi}^{-1}$. We have
\begin{equation}\label{eq:predual_to_BBC_CNC}
 (\kappa_{\varphi}\circ \mathcal{J}_{Y})(y)(x)
=(\mathcal{J}_{Y}(y)\circ\varphi)(x)
=\mathcal{J}_{Y}(y)(\varphi(x))
=\varphi(x)(y)
\end{equation}
for all $y\in Y$ and $x\in X$. Let $x''\in ((X,\widetilde{\gamma}_{\varphi})_{b}')'$. Then there is a unique $x\in X$ such that 
$x''=\mathcal{I}_{\varphi}(x)$ and 
\begin{align*}
 (\kappa_{\varphi}\circ\mathcal{J}_{Y})^{t}(x'')(y)
&=x''((\kappa_{\varphi}\circ\mathcal{J}_{Y})(y))
 \underset{\eqref{eq:predual_to_BBC_CNC}}{=}x''(\varphi(\cdot)(y))
 =\mathcal{I}_{\varphi}(x)(\varphi(\cdot)(y))\\
&=\varphi(x)(y)
 =\varphi(\mathcal{I}_{\varphi}^{-1}(x''))(y)
\end{align*}
for all $y\in Y$.

(c) This part follows from part (b), \eqref{eq:predual_to_BBC_CNC} and \cite[Chap.~IV, \S1, 1.2, p.~124]{schaefer1971}. 
\end{proof}

Due to \prettyref{thm:general_dixmier_ng} and \prettyref{prop:predual_to_BBC_CNC} (a) we have that 
$\operatorname{(BBC)}$ and $\operatorname{(CNC)}$ are necessary and sufficient conditions for the 
existence of a complete barrelled predual of a bornological space.

\begin{cor}\label{cor:existence_cb_predual}
Let $(X,\tau)$ be a bornological locally convex Hausdorff space. 
Then the following assertions are equivalent.
\begin{enumerate}
\item[(a)] $(X,\tau)$ has a complete barrelled predual.
\item[(b)] $(X,\tau)$ has a semi-Montel prebidual $(Y,\varphi)$ such that $Y_{b}'$ is complete.
\item[(c)] $(X,\tau)$ satisfies $\operatorname{(BBC)}$ and $\operatorname{(CNC)}$ 
for some $\widetilde{\tau}$ and $(X,\widetilde{\gamma})$ satisfies $\operatorname{(B\tau B}\widetilde{\gamma})$.
\item[(d)] $(X,\tau)$ satisfies $\operatorname{(BBCl)}$ and $\operatorname{(CNC)}$ 
for some $\widetilde{\tau}$ and $(X,\widetilde{\gamma})$ is semi-reflexive and satisfies $\operatorname{(B\tau B}\widetilde{\gamma})$.
\item[(e)] $(X,\tau)$ satisfies $\operatorname{(BBC)}$ and $\operatorname{(CNC)}$ 
for some $\widetilde{\tau}$.
\end{enumerate}
\end{cor}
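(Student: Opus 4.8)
The plan is to prove that (a), (c) and (e) are mutually equivalent via the arrows (a)$\Rightarrow$(c)$\Rightarrow$(e)$\Rightarrow$(a), and then to attach (d) and (b) through the arrows (c)$\Rightarrow$(d)$\Rightarrow$(b)$\Rightarrow$(a). The arrow (e)$\Rightarrow$(a) is precisely \prettyref{thm:general_dixmier_ng}, which provides the complete barrelled predual $(X_{\mathcal{B},\widetilde{\tau}}',\mathcal{I})$. For (a)$\Rightarrow$(c) I would take a complete barrelled predual $(Y,\varphi)$ and apply \prettyref{prop:predual_to_BBC_CNC} (a) with $\widetilde{\tau}\coloneqq\sigma_{\varphi}(X,Y)$: it yields that $(X,\tau)$ satisfies $\operatorname{(BBC)}$ and $\operatorname{(CNC)}$ for this $\widetilde{\tau}$ and that $(X,\widetilde{\gamma})$, where $\widetilde{\gamma}=\widetilde{\gamma}(\tau,\widetilde{\tau})=\tau_{\operatorname{c},\varphi}(X,Y)$, satisfies $\operatorname{(B\tau B}\widetilde{\gamma})$, which is exactly (c). The arrow (c)$\Rightarrow$(e) is trivial, since it only drops the condition $\operatorname{(B\tau B}\widetilde{\gamma})$, and (c)$\Rightarrow$(d) holds because $\operatorname{(BBC)}$ implies $\operatorname{(BBCl)}$ and, by \prettyref{cor:semi_montel_gen_mixed}, the space $(X,\widetilde{\gamma})$ is then semi-Montel and hence semi-reflexive.

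The main work is the arrow (d)$\Rightarrow$(b), and the obstacle is that (d) only delivers a semi-reflexive $(X,\widetilde{\gamma})$ whereas (b) asks for a semi-Montel prebidual; the remedy is to pass from $\widetilde{\tau}$ to its weak companion $\widetilde{\tau}_{1}\coloneqq\sigma(X,(X,\widetilde{\tau})')$. Concretely, \prettyref{cor:semi_reflexive_BBCl_CNC} converts (d) into the assertions that $(X,\tau)$ satisfies $\operatorname{(BBC)}$ and $\operatorname{(CNC)}$ for $\widetilde{\tau}_{1}$, that $\widetilde{\gamma}_{1}\coloneqq\widetilde{\gamma}(\tau,\widetilde{\tau}_{1})$ equals $\tau_{\operatorname{c}}(X,(X,\widetilde{\gamma})_{b}')$ and satisfies $\operatorname{(B\tau B}\widetilde{\gamma}_{1})$, and that $(X,\widetilde{\gamma}_{1})_{b}'=(X,\widetilde{\gamma})_{b}'$. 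Applying \prettyref{cor:semi_montel_gen_mixed} with $\widetilde{\tau}_{1}$ in place of $\widetilde{\tau}$ then shows that $(X,\widetilde{\gamma}_{1})$ is semi-Montel, while applying \prettyref{cor:general_dixmier_ng_semi_reflexive} with the original $\widetilde{\tau}$ (legitimate because $\operatorname{(BBC)}$ implies $\operatorname{(BBCl)}$ and $(X,\widetilde{\gamma})$ is semi-reflexive) shows that $(X,\widetilde{\gamma})_{b}'$ is complete barrelled and that the evaluation map $\mathcal{I}\colon(X,\tau)\to((X,\widetilde{\gamma})_{b}')_{b}'$ is a topological isomorphism. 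Since $(X,\widetilde{\gamma})_{b}'=(X,\widetilde{\gamma}_{1})_{b}'$, the pair $((X,\widetilde{\gamma}_{1}),\mathcal{I})$ is a prebidual of $(X,\tau)$ for which $(X,\widetilde{\gamma}_{1})$ is semi-Montel and $(X,\widetilde{\gamma}_{1})_{b}'$ is complete, that is, (b) holds.

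It remains to close the cycle with (b)$\Rightarrow$(a). Given a semi-Montel prebidual $(Y,\varphi)$ with $Y_{b}'$ complete, the topological isomorphism $\varphi\colon X\to(Y_{b}')_{b}'$ exhibits $(Y_{b}',\varphi)$ as a predual of $(X,\tau)$, so it suffices to check that $Y_{b}'$ is barrelled. This is the case because $Y$ is semi-Montel and hence semi-reflexive: every $\sigma((Y_{b}')',Y_{b}')$-bounded subset of $(Y_{b}')'$ is, by semi-reflexivity of $Y$, the image under the canonical injection $\mathcal{J}\colon Y\to(Y_{b}')'$ of a $\sigma(Y,Y')$-bounded --- hence, by the Mackey theorem, bounded --- subset of $Y$, and is therefore equicontinuous on $Y_{b}'$ by \prettyref{prop:bounded_equicont}, so that $Y_{b}'$ is barrelled. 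Together with the completeness of $Y_{b}'$, this makes $(Y_{b}',\varphi)$ a complete barrelled predual of $(X,\tau)$, which establishes (a) and completes the proof of the corollary.
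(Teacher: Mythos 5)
Your proof is correct, and the overall skeleton (a)$\Rightarrow$(c)$\Rightarrow$(e)$\Rightarrow$(a) together with (a)$\Rightarrow$(c) via \prettyref{prop:predual_to_BBC_CNC} and (e)$\Rightarrow$(a) via \prettyref{thm:general_dixmier_ng} matches the paper. Where you genuinely diverge is in how (b) and (d) are attached. The paper proves (c)$\Rightarrow$(b) directly --- under (c) one has $\operatorname{(BBC)}$, so \prettyref{cor:semi_montel_gen_mixed} immediately makes $(X,\widetilde{\gamma})$ itself semi-Montel and \prettyref{cor:general_dixmier_ng_semi_reflexive} finishes --- and handles (d) by the separate, short implication (d)$\Rightarrow$(a). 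You instead prove (d)$\Rightarrow$(b), which is harder because (d) only gives $\operatorname{(BBCl)}$ and semi-reflexivity of $(X,\widetilde{\gamma})$, not semi-Montelness; your remedy of passing to $\widetilde{\tau}_{1}=\sigma(X,(X,\widetilde{\tau})')$ via \prettyref{cor:semi_reflexive_BBCl_CNC}, upgrading $(X,\widetilde{\gamma}_{1})$ to a semi-Montel space by \prettyref{cor:semi_montel_gen_mixed}, and using $(X,\widetilde{\gamma})_{b}'=(X,\widetilde{\gamma}_{1})_{b}'$ to transport the biduality isomorphism is correct, but it costs an extra appeal to \prettyref{cor:semi_reflexive_BBCl_CNC} that the paper's decomposition avoids entirely. (Your parenthetical justification for invoking \prettyref{cor:general_dixmier_ng_semi_reflexive} is slightly off --- in (d) you are handed $\operatorname{(BBCl)}$ directly, so there is nothing to derive from $\operatorname{(BBC)}$ --- but the application is legitimate regardless.) Finally, for (b)$\Rightarrow$(a) the paper simply cites that semi-reflexive spaces are distinguished, whereas you reprove this from scratch using \prettyref{prop:bounded_equicont} and the Mackey theorem; your argument is sound and self-contained, at the price of a few extra lines.
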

\begin{proof}
(b)$\Rightarrow$(a) This implication follows from the observations that semi-Montel space are semi-reflexive and 
semi-reflexive locally convex Hausdorff spaces are distinguished by \cite[11.4.1 Proposition, p.~227]{jarchow1981}. Hence the tuple $(Y_{b}',\varphi)$ is a complete barrelled predual of $(X,\tau)$.

(a)$\Rightarrow$(c) This implication follows from \prettyref{prop:predual_to_BBC_CNC} (a) with 
$\widetilde{\tau}\coloneqq\sigma_{\varphi}(X,Y)$ for a complete barrelled predual $(Y,\varphi)$ of $(X,\tau)$. 

(c)$\Rightarrow$(b) Due to \prettyref{cor:semi_montel_gen_mixed} $(X,\widetilde{\gamma})$ is a semi-Montel space and 
thus semi-reflexive. Hence the implication follows from \prettyref{cor:general_dixmier_ng_semi_reflexive} 
with $Y\coloneqq (X,\widetilde{\gamma})$ and $\varphi\coloneqq\mathcal{I}$. 

(c)$\Rightarrow$(d) This implication follows from \prettyref{cor:semi_montel_gen_mixed} 
and the observation that semi-Montel space are semi-reflexive. 

(d)$\Rightarrow$(a) This implication follows from \prettyref{cor:general_dixmier_ng_semi_reflexive} with 
$Y\coloneqq (X,\widetilde{\gamma})$ and $\varphi\coloneqq\mathcal{I}$. 

(c)$\Rightarrow$(e) This implication is obvious. 

(e)$\Rightarrow$(a) This implication follows from \prettyref{thm:general_dixmier_ng}. 
\end{proof}

\begin{cor}\label{cor:existence_cbDF_predual}
Let $(X,\tau)$ be a bornological locally convex Hausdorff space. 
Then the following assertions are equivalent.
\begin{enumerate}
\item[(a)] $(X,\tau)$ has a complete barrelled DF-predual.
\item[(b)] $(X,\tau)$ has a semi-Montel prebidual $(Y,\varphi)$ such that $Y_{b}'$ is a complete DF-space.
\item[(c)] $(X,\tau)$ is a Fr\'echet space satisfying $\operatorname{(BBC)}$ and $\operatorname{(CNC)}$ 
for some $\widetilde{\tau}$ and $(X,\widetilde{\gamma})$ satisfies $\operatorname{(B\tau B}\widetilde{\gamma})$.
\item[(d)] $(X,\tau)$ is a Fr\'echet space satisfying $\operatorname{(BBCl)}$ and $\operatorname{(CNC)}$ 
for some $\widetilde{\tau}$ and $(X,\widetilde{\gamma})$ is semi-reflexive and satisfies $\operatorname{(B\tau B}\widetilde{\gamma})$.
\item[(e)] $(X,\tau)$  is a Fr\'echet space satisfying $\operatorname{(BBC)}$ and $\operatorname{(CNC)}$ 
for some $\widetilde{\tau}$.
\end{enumerate}
\end{cor}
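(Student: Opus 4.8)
The plan is to trace through the chain of implications exactly as in the proof of \prettyref{cor:existence_cb_predual}, carrying the DF-property along; the guiding idea is that ``complete barrelled DF-predual'' should amount to ``complete barrelled predual together with $(X,\tau)$ being a Fr\'echet space''. Beyond the results already established I will use two classical facts about DF-spaces: the strong dual of a metrizable locally convex space is a (complete) DF-space, while the strong dual of a DF-space is a Fr\'echet space \cite[12.4.2 Theorem, p.~258]{jarchow1981}; and a \emph{barrelled} locally convex space admitting a fundamental sequence of bounded sets is a DF-space, the equicontinuity condition in the definition of a DF-space being automatic for barrelled spaces.

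I would first deal with (b)$\Rightarrow$(a): a semi-Montel space $Y$ is semi-reflexive, hence distinguished by \cite[11.4.1 Proposition, p.~227]{jarchow1981}, so $Y_{b}'$ is barrelled; since $Y_{b}'$ is assumed to be a complete DF-space and $\varphi\colon X\to(Y_{b}')_{b}'$ is a topological isomorphism, $(Y_{b}',\varphi)$ is a complete barrelled DF-predual of $(X,\tau)$. For (a)$\Rightarrow$(c) I would start from a complete barrelled DF-predual $(Y,\varphi)$: since $Y$ is a DF-space, $Y_{b}'$ is a Fr\'echet space by \cite[12.4.2 Theorem, p.~258]{jarchow1981}, and therefore so is $(X,\tau)\cong Y_{b}'$; now \prettyref{prop:predual_to_BBC_CNC}~(a), applied with $\widetilde{\tau}\coloneqq\sigma_{\varphi}(X,Y)$, shows that $(X,\tau)$ satisfies $\operatorname{(BBC)}$ and $\operatorname{(CNC)}$ for $\widetilde{\tau}$ and that $(X,\widetilde{\gamma})$ satisfies $\operatorname{(B\tau B}\widetilde{\gamma})$, which is exactly (c).

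The core of the argument is the DF-part of the remaining implications, and it is always the same. Under (c), $(X,\widetilde{\gamma})$ is semi-Montel by \prettyref{cor:semi_montel_gen_mixed}, hence semi-reflexive, and by \prettyref{cor:general_dixmier_ng_semi_reflexive} (with $Y\coloneqq(X,\widetilde{\gamma})$ and $\varphi\coloneqq\mathcal{I}$) it is a semi-Montel prebidual of $(X,\tau)$ whose associated predual is $(X,\widetilde{\gamma})_{b}'=(X_{\mathcal{B},\widetilde{\tau}}',\beta)$ by \prettyref{prop:predual_is_dual}~(a); this space is complete and barrelled by \prettyref{thm:general_dixmier_ng}. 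To see that it is also a DF-space, note that $(X,\tau)$ is a Fr\'echet space, so $(X,\tau)_{b}'$ is a DF-space and thus possesses a fundamental sequence of bounded sets; intersecting this sequence with the closed topological subspace $(X_{\mathcal{B},\widetilde{\tau}}',\beta)$ of $(X,\tau)_{b}'$ (\prettyref{prop:predual_complete}~(a)) yields a fundamental sequence of bounded sets for $(X_{\mathcal{B},\widetilde{\tau}}',\beta)$, and, being barrelled, this space is a DF-space. This establishes (c)$\Rightarrow$(b). Under hypothesis (d), \prettyref{cor:general_dixmier_ng_semi_reflexive} again produces a complete barrelled predual $(X,\widetilde{\gamma})_{b}'=(X_{\mathcal{B},\widetilde{\tau}}',\beta)$, and under hypothesis (e), \prettyref{thm:general_dixmier_ng} produces the complete barrelled predual $(X_{\mathcal{B},\widetilde{\tau}}',\beta)$; in both cases the identical DF-argument applies, yielding (d)$\Rightarrow$(a) and (e)$\Rightarrow$(a). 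Finally, (c)$\Rightarrow$(d) holds because semi-Montel spaces are semi-reflexive, and (c)$\Rightarrow$(e) is trivial, so the equivalence is closed.

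I expect the DF-step to be the only real obstacle: closed subspaces of DF-spaces need not be DF-spaces, so the proof of \prettyref{prop:predual_complete}~(b) cannot simply be mirrored. The remedy is that the predual under consideration is barrelled by \prettyref{thm:general_dixmier_ng}, which on the one hand makes the second DF-axiom hold automatically and on the other hand, together with the fact that it is a topological subspace of the DF-space $(X,\tau)_{b}'$, provides a fundamental sequence of bounded sets; equivalently, one may invoke that a barrelled locally convex space whose strong dual is metrizable is always a DF-space.
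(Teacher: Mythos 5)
Your proposal is correct and follows essentially the same route as the paper: the same decomposition into implications, each reduced to the corresponding implication of \prettyref{cor:existence_cb_predual} together with \cite[12.4.2 Theorem, p.~258]{jarchow1981} for (a)$\Rightarrow$(c) and the DF-property of the predual $(X_{\mathcal{B},\widetilde{\tau}}',\beta)$ for the converse directions. The only difference is that where the paper simply cites the proof of \cite[5.~Corollary (b), p.~117--118]{bierstedt1992} for that DF-property, you supply the argument directly (the predual is a topological subspace of the DF-space $(X,\tau)_{b}'$, hence inherits a fundamental sequence of bounded sets, and barrelledness from \prettyref{thm:general_dixmier_ng} makes the remaining DF-axiom automatic), which is a correct and welcome filling-in of detail; just note that in case (d), where only $\operatorname{(BBCl)}$ is assumed, the closed-subspace statement should be taken from \prettyref{prop:predual_is_dual}~(a) rather than \prettyref{prop:predual_complete}~(a).
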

\begin{proof}
(b)$\Rightarrow$(a), (c)$\Rightarrow$(d), (c)$\Rightarrow$(e) These implications follow from \prettyref{cor:existence_cb_predual}.

(a)$\Rightarrow$(c) This implication follows from the proof of the implication (a)$\Rightarrow$(c) 
of \prettyref{cor:existence_cb_predual} and the observation that the strong dual of a DF-space is a Fr\'echet space by 
\cite[12.4.2 Theorem, p.~258]{jarchow1981}. 

(c)$\Rightarrow$(b), (d)$\Rightarrow$(a), (e)$\Rightarrow$(a) These implications follow from the proof of the corresponding implications 
of \prettyref{cor:existence_cb_predual} and the observation that $F\coloneqq (X_{\mathcal{B},\widetilde{\tau}}',\beta)$ 
is a complete barrelled DF-space by the proof of \cite[5.~Corollary (b), p.~117--118]{bierstedt1992}, which coincides with 
$(X,\widetilde{\gamma})_{b}'$ by \prettyref{prop:predual_is_dual} (a) 
if $(X,\widetilde{\gamma})$ satisfies 
$\operatorname{(B\tau B}\widetilde{\gamma})$.
\end{proof}

\begin{cor}\label{cor:existence_frechet_predual}
Let $(X,\tau)$ be a bornological locally convex Hausdorff space and $\mathcal{B}$ the family of $\tau$-bounded sets. 
Then the following assertions are equivalent.
\begin{enumerate}
\item[(a)] $(X,\tau)$ has a Fr\'echet predual.
\item[(b)] $(X,\tau)$ has a semi-Montel prebidual $(Y,\varphi)$ such that $Y_{b}'$ is a Fr\'echet space.
\item[(c)] $(X,\tau)$ is a complete DF-space satisfying $\operatorname{(BBC)}$ and $\operatorname{(CNC)}$ 
for some $\widetilde{\tau}$ and $(X,\gamma)$ satisfies $\operatorname{(B\tau B}\gamma)$.
\item[(d)] $(X,\tau)$ is a complete DF-space satisfying $\operatorname{(BBCl)}$ and $\operatorname{(CNC)}$ for some 
$\widetilde{\tau}\leq\tau$ and $(X,\gamma)$ is semi-reflexive. 
\item[(e)] $(X,\tau)$ is a complete DF-space satisfying $\operatorname{(BBC)}$ and $\operatorname{(CNC)}$ 
for some $\widetilde{\tau}$. 
\item[(f)] $(X,\tau)$ is a complete DF-space satisfying $\operatorname{(BBC)}$ for some $\widetilde{\tau}$ and 
the Fr\'echet space $(X_{\mathcal{B},\widetilde{\tau}}',\beta)$ is distinguished. 
\end{enumerate}
\end{cor}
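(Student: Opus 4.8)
The plan is to prove all the equivalences by reducing as much as possible to \prettyref{cor:existence_cb_predual}, \prettyref{thm:general_dixmier_ng}, \prettyref{prop:predual_complete} and \prettyref{prop:predual_to_BBC_CNC}, and to treat (f) separately. Throughout I use that $(X,\tau)$ is bornological and that a DF-space has a family $\mathcal{B}$ of $\tau$-bounded sets of countable type, so that whenever $(X,\tau)$ satisfies $\operatorname{(BBCl)}$ for some $\widetilde{\tau}\leq\tau$ the mixed topology $\gamma=\gamma(\tau,\widetilde{\tau})$ is defined and, by \prettyref{prop:mixed=gen_mixed}, coincides with $\widetilde{\gamma}$ and makes $(X,\widetilde{\gamma})$ satisfy $\operatorname{(B\tau B}\widetilde{\gamma})$. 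As $\operatorname{(BBC)}$ entails $\operatorname{(BBCl)}$ and $\widetilde{\tau}\leq\tau$ by \prettyref{rem:BBC_CNC} (c), the clauses on $\gamma$ in (c) and (d) then become automatic, and the equivalences (c)$\Leftrightarrow$(d)$\Leftrightarrow$(e) of \prettyref{cor:existence_cb_predual}, conjoined with the additional requirement that $(X,\tau)$ be a complete DF-space, yield (c)$\Leftrightarrow$(d)$\Leftrightarrow$(e) here. It thus remains to link (a), (b) and (f) to (e).

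For (a)$\Rightarrow$(e): a Fr\'echet predual $(Y,\varphi)$ is in particular a complete barrelled predual, so $(X,\tau)$ satisfies $\operatorname{(BBC)}$ and $\operatorname{(CNC)}$ for $\sigma_{\varphi}(X,Y)$ by \prettyref{prop:predual_to_BBC_CNC} (a), while $X\cong Y_{b}'$ is the strong dual of a Fr\'echet space and hence a complete DF-space by \cite[12.4.2 Theorem, p.~258]{jarchow1981} and the completeness of strong duals of metrizable spaces. For (e)$\Rightarrow$(a): by \prettyref{thm:general_dixmier_ng} the space $F\coloneqq(X_{\mathcal{B},\widetilde{\tau}}',\beta)$ with the evaluation map $\mathcal{I}$ is a complete barrelled predual of $(X,\tau)$, and it is a Fr\'echet space by \prettyref{prop:predual_complete} (b) because $(X,\tau)$ is a DF-space. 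The equivalence (a)$\Leftrightarrow$(b) is obtained just as the corresponding one in \prettyref{cor:existence_cbDF_predual}: for (a)$\Rightarrow$(b) apply \prettyref{prop:predual_to_BBC_CNC} to a Fr\'echet predual $(Y,\varphi)$, note that $(X,\widetilde{\gamma}_{\varphi})$ is semi-Montel by \prettyref{cor:semi_montel_gen_mixed} and that $\kappa_{\varphi}\circ\mathcal{J}_{Y}$ is a topological isomorphism of $Y$ onto $(X,\widetilde{\gamma}_{\varphi})_{b}'$, so $((X,\widetilde{\gamma}_{\varphi}),\mathcal{I}_{\varphi})$ is a semi-Montel prebidual whose strong dual is the Fr\'echet space $Y$; for (b)$\Rightarrow$(a) a semi-Montel space is semi-reflexive and thus distinguished, so the Fr\'echet space $Y_{b}'$ is barrelled and complete, whence $(Y_{b}',\varphi)$ is a Fr\'echet predual.

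To relate (f) to (e) I work in both directions with $F\coloneqq(X_{\mathcal{B},\widetilde{\tau}}',\beta)$, which under $\operatorname{(BBC)}$ for $\widetilde{\tau}$ is a Fr\'echet space by \prettyref{prop:predual_complete} (b), satisfies $(X,\widetilde{\gamma})'=X_{\mathcal{B},\widetilde{\tau}}'$ and $F=(X,\widetilde{\gamma})_{b}'$ by \prettyref{prop:predual_is_dual} (a) and \prettyref{prop:mixed=gen_mixed}, and for which $(X,\widetilde{\gamma})$ is semi-reflexive by \prettyref{cor:semi_reflexive_gen_mixed}, because the basis $\mathcal{B}_{1}$ of absolutely convex $\widetilde{\tau}$-compact sets from \prettyref{rem:BBC_CNC} (b) consists of $\sigma(X,(X,\widetilde{\tau})')$-compact sets. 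Hence the canonical evaluation identifies $F_{b}'$, as a topological vector space, with $(X,\beta(X,X_{\mathcal{B},\widetilde{\tau}}'))$. Now, from the identity $\beta=\beta((X,\tau)',(X,\tau))_{\mid X_{\mathcal{B},\widetilde{\tau}}'}$ recalled before \prettyref{rem:predual_independent}, the barrelledness of $(X,\tau)$, the Mackey theorem and $\operatorname{(B\tau B}\widetilde{\gamma})$, one checks that $\beta(X,X_{\mathcal{B},\widetilde{\tau}}')\leq\tau$ and that both topologies have precisely $\mathcal{B}$ as their bounded sets. Since topologies on $X$ with the same bounded sets have the same bornologification and $\tau$ is bornological, $\beta(X,X_{\mathcal{B},\widetilde{\tau}}')$ is bornological if and only if it equals $\tau$; and by Grothendieck's characterisation of distinguished Fr\'echet spaces the former is equivalent to $F_{b}'$ being bornological, i.e.~to $F$ being distinguished. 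Therefore $F$ is distinguished precisely when $\mathcal{I}\colon(X,\tau)\to F_{b}'$ is a topological isomorphism, i.e.~precisely when $(F,\mathcal{I})$ is a complete barrelled predual of $(X,\tau)$; combined with \prettyref{prop:predual_to_BBC_CNC} (a), \prettyref{thm:general_dixmier_ng} and the fact that being a complete DF-space is part of both (e) and (f), this gives (e)$\Leftrightarrow$(f).

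The step I expect to be the main obstacle is exactly (f)$\Leftrightarrow$(e): one has to recognise that distinguishedness of the Fr\'echet space $(X_{\mathcal{B},\widetilde{\tau}}',\beta)$ is nothing other than the bornologicity of $\tau$ read off through the evaluation map, which rests on identifying the strong bidual topology of $(X,\widetilde{\gamma})$ on $X$ with the common bornologification of $\tau$ and of $\beta(X,X_{\mathcal{B},\widetilde{\tau}}')$. The remaining work is just quoting \prettyref{cor:existence_cb_predual}, \prettyref{thm:general_dixmier_ng}, \prettyref{prop:predual_complete} and \prettyref{prop:predual_to_BBC_CNC} and carrying the DF- and Fr\'echet-properties along the relevant isomorphisms.
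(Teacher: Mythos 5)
Your proof is correct, and for the equivalences among (a)--(e) it follows essentially the same reduction to \prettyref{cor:existence_cb_predual}, \prettyref{thm:general_dixmier_ng}, \prettyref{prop:predual_complete} (b), \prettyref{prop:predual_to_BBC_CNC} and \prettyref{prop:mixed=gen_mixed} as the paper (modulo a citation slip: that the strong dual of a Fr\'echet space is a complete DF-space is \cite[12.4.5 Theorem, p.~260]{jarchow1981}, not 12.4.2). Where you genuinely diverge is the link between (f) and the rest. The paper proves (f)$\Rightarrow$(a) by invoking the sharper form of Mujica's theorem, valid under $\operatorname{(BBC)}$ alone, that $\widetilde{\mathcal{I}}\colon(X,\tau)\to(X_{\mathcal{B},\widetilde{\tau}}',\beta)_{i}'$ is a topological isomorphism onto the \emph{inductive} dual, and then identifies $E_{i}'=E_{b}'$ for $E=(X_{\mathcal{B},\widetilde{\tau}}',\beta)$ using that $E$ is barrelled (so null sequences in $E_{b}'$ are equicontinuous) and that $E_{b}'$ is bornological by distinguishedness. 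You instead stay inside the paper's own machinery: $\operatorname{(BBC)}$ plus countable type of $\mathcal{B}$ gives semi-reflexivity of $(X,\widetilde{\gamma})$ via \prettyref{cor:semi_reflexive_gen_mixed}, so $F_{b}'$ is algebraically $X$ carrying a topology $\leq\tau$ (quasi-barrelledness of the bornological space $(X,\tau)$) with the same bounded sets (Mackey plus $\operatorname{(B\tau B}\widetilde{\gamma})$), and distinguishedness of $F$ becomes, via Grothendieck's characterisation and uniqueness of the bornologification, the statement that this topology equals $\tau$, i.e.\ that $\mathcal{I}$ is a topological isomorphism onto $F_{b}'$. Both arguments are sound; yours avoids the inductive-dual version of Mujica's theorem at the cost of the semi-reflexivity/bornologification detour, and it packages (e)$\Leftrightarrow$(f) symmetrically, whereas the paper proves (c)$\Rightarrow$(f) and (f)$\Rightarrow$(a) separately. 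One caveat on wording: your claim that ``the clauses on $\gamma$ in (c) and (d) become automatic'' is accurate only for the $\operatorname{(B\tau B}\gamma)$-clause and the identification $\gamma=\widetilde{\gamma}$; semi-reflexivity of $(X,\gamma)$ in (d) remains a genuine hypothesis under $\operatorname{(BBCl)}$, though your subsequent appeal to the full set of equivalences of \prettyref{cor:existence_cb_predual} renders this harmless.
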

\begin{proof}
(c)$\Rightarrow$(e) This implication is obvious.

(b)$\Rightarrow$(a), (c)$\Rightarrow$(d) These implications follow from 
the proof of the corresponding implications of \prettyref{cor:existence_cb_predual} by noting that $\gamma=\widetilde{\gamma}$ and 
that $(X,\gamma)$ satisfies $\operatorname{(B\tau B}\gamma)$ by \prettyref{prop:mixed=gen_mixed} and \prettyref{rem:BBC_CNC} (c).

(a)$\Rightarrow$(c) This implication follows from the proof of the implication (a)$\Rightarrow$(c) 
of \prettyref{cor:existence_cb_predual}, the observation that the strong dual of a Fr\'echet predual $(Y,\varphi)$ of $(X,\tau)$ 
is a DF-space by \cite[12.4.5 Theorem, p.~260]{jarchow1981} and by noting that 
$\widetilde{\gamma}_{\varphi}=\gamma(\tau,\sigma_{\varphi}(X,Y))$ by \prettyref{prop:mixed=gen_mixed} and \prettyref{rem:BBC_CNC} (c).

(c)$\Rightarrow$(b), (d)$\Rightarrow$(a), (e)$\Rightarrow$(a) These implications follow from the proof of the corresponding implications 
of \prettyref{cor:existence_cb_predual} and the observation that $(X_{\mathcal{B},\widetilde{\tau}}',\beta)$ is a Fr\'echet space 
by \prettyref{prop:predual_complete} (b), which coincides with 
$(X,\gamma)_{b}'$ by \prettyref{prop:predual_is_dual} (a) 
since $\gamma=\widetilde{\gamma}$ and $(X,\gamma)$ satisfies 
$\operatorname{(B\tau B}\gamma)$ by \prettyref{prop:mixed=gen_mixed} and \prettyref{rem:BBC_CNC} (c).

(c)$\Rightarrow$(f) Due to the equivalence (a)$\Leftrightarrow$(e) the Fr\'echet space $(X_{\mathcal{B},\widetilde{\tau}}',\beta)$ 
is distinguished by \cite[Theorem 8.3.44, p.~261]{bonet1987} since 
$(X_{\mathcal{B},\widetilde{\tau}}',\beta)_{b}'$ is topologically isomorphic to the bornological space $(X,\tau)$.

(f)$\Rightarrow$(a) By \cite[1.~Theorem (Mujica), p.~115]{bierstedt1992} the map 
\[
\widetilde{\mathcal{I}}\colon(X,\tau)\to (X_{\mathcal{B},\widetilde{\tau}}',\beta)_{i}',\; x\longmapsto [x'\mapsto x'(x)], 
\]
is a topological isomorphism where $(X_{\mathcal{B},\widetilde{\tau}}',\beta)_{i}'$ 
is the inductive dual. Since $E\coloneqq(X_{\mathcal{B},\widetilde{\tau}}',\beta)$ 
is a Fr\'echet space, thus barrelled, every null sequence in $E_{b}'$ is equicontinuous by 
\cite[11.1.1 Proposition, p.~220]{jarchow1981}. Thus it follows from \cite[Observation 8.3.40 (b), p.~260]{bonet1987} that 
$E_{i}'=E_{b}'$ because $E_{b}'$ is bornological by \cite[Theorem 8.3.44, p.~261]{bonet1987} as 
$E$ is a distinguished Fr\'echet space. 
\end{proof} 

We note that the space $(X_{\mathcal{B},\widetilde{\tau}}',\beta)$ is distinguished 
by \cite[Proposition 8.3.45 (iii), p.~262]{bonet1987} if it is a quasi-normable Fr\'echet space. Sufficient conditions 
that $(X_{\mathcal{B},\widetilde{\tau}}',\beta)$ is a quasi-normable Fr\'echet space 
are given in \cite[4.~Remark, p.~117]{bierstedt1992}. The following example is a slight generalisation of 
\cite[3.~Examples B, p.~125--126]{bierstedt1992} where holomorphic functions are considered.

\begin{exa}\label{ex:borno_DF_hypo}
Let $\Omega\subset\R^{d}$ be open, $P(\partial)$ a hypoelliptic linear partial differential operator on $\mathcal{C}^{\infty}(\Omega)$ 
and $\mathcal{V}\coloneqq (v_{n})_{n\in\N}$ a \emph{drecreasing} family, i.e.~$v_{n+1}\leq v_{n}$ for all $n\in\N$, 
of continuous functions $v_{n}\colon\Omega\to(0,\infty)$. We define the inductive limit
\[
\mathcal{VC}_{P}(\Omega)\coloneqq\lim\limits_{\substack{\longrightarrow\\n\in \N}}\,\mathcal{C}_{P}v_{n}(\Omega)
\] 
of the Banach spaces $(\mathcal{C}_{P}v_{n}(\Omega),\|\cdot\|_{v_{n}})$, and equip 
$\mathcal{VC}_{P}(\Omega)$ with its locally convex inductive limit topology ${_{\mathcal{V}}\tau}$.
The space $(\mathcal{VC}_{P}(\Omega),{_{\mathcal{V}}\tau})$ is a strict inductive limit and thus ${_{\mathcal{V}}\tau}$ is Hausdorff by 
\cite[4.6.1 Theorem, p.~84]{jarchow1981} and $(\mathcal{VC}_{P}(\Omega),{_{\mathcal{V}}\tau})$ complete by 
\cite[4.6.4 Theorem, p.~86]{jarchow1981}. Further, the inductive limit is a (ultra)bornological DF-space by 
\cite[Proposition 25.16, p.~301]{meisevogt1997} and 
$\tau_{\operatorname{co}}\coloneqq{\tau_{\operatorname{co}}}_{\mid \mathcal{VC}_{P}(\Omega)}\leq {_{\mathcal{V}}\tau}$. 
The $\|\cdot\|_{v_{n}}$-closed unit balls $B_{\|\cdot\|_{v_{n}}}$ of $\mathcal{C}_{P}v_{n}(\Omega)$ 
are $\tau_{\operatorname{co}}$-closed. It follows that they are also $\tau_{\operatorname{co}}$-compact for all $n\in\N$ 
as $(\mathcal{C}_{P}(\Omega),\tau_{\operatorname{co}})$ is a Fr\'echet--Schwartz space. 
Since $\tau_{\operatorname{co}}\leq {_{\mathcal{V}}\tau}$, they are ${_{\mathcal{V}}\tau}$-closed as well and thus they 
form a basis of absolutely convex ${_{\mathcal{V}}\tau}$-bounded sets by \cite[Proposition 25.16, p.~301]{meisevogt1997}.
Therefore $(\mathcal{VC}_{P}(\Omega),{_{\mathcal{V}}\tau})$ satisfies $\operatorname{(BBC)}$ for $\tau_{\operatorname{co}}$. 
If $\mathcal{V}$ is \emph{regularly decreasing}, i.e.~for every $n\in\N$ there is $m\geq n$ such that for every $U\subset\Omega$ with 
$\inf_{x\in U}v_{m}(x)/v_{n}(x)>0$ we also have $\inf_{x\in U}v_{k}(x)/v_{n}(x)>0$ for all $k\geq m+1$, then the Fr\'echet space 
$(\mathcal{VC}_{P}(\Omega)_{\mathcal{B},\tau_{\operatorname{co}}}',\beta)$ is quasi-normable by \cite[p.~125--126]{bierstedt1992} 
and so distinguished. 
\end{exa}

\begin{cor}\label{cor:existence_banach_predual}
Let $(X,\tau)$ be a bornological locally convex Hausdorff space. 
Then the following assertions are equivalent.
\begin{enumerate}
\item[(a)] $(X,\tau)$ has a Banach predual.
\item[(b)] $(X,\tau)$ has a semi-Montel prebidual $(Y,\varphi)$ such that $Y_{b}'$ is completely norm\-able.
\item[(c)] $(X,\tau)$ is a completely normable space satisfying $\operatorname{(BBC)}$ and $\operatorname{(CNC)}$ 
for some $\widetilde{\tau}$ and $(X,\gamma)$ satisfies $\operatorname{(B\tau B}\gamma)$.
\item[(d)] $(X,\tau)$ is a completely normable space satisfying $\operatorname{(BBCl)}$ 
for some $\widetilde{\tau}\leq\tau$ and $(X,\gamma)$ is semi-reflexive. 
\item[(e)] $(X,\tau)$ is a completely normable space satisfying $\operatorname{(BBC)}$ for some $\widetilde{\tau}$.
\item[(f)] There is a norm $\vertiii{\cdot}$ on $X$ which induces $\tau$ such that $B_{\vertiii{\cdot}}$ $\widetilde{\tau}$-compact for some $\widetilde{\tau}$.
\end{enumerate}
\end{cor}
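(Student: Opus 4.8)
The plan is to establish the equivalences along the same lines as \prettyref{cor:existence_cbDF_predual} and \prettyref{cor:existence_frechet_predual}, using as building blocks \prettyref{cor:existence_cb_predual}, \prettyref{prop:predual_to_BBC_CNC}, \prettyref{prop:BBC_CNC_normable}, \prettyref{prop:mixed=gen_mixed}, \prettyref{prop:predual_is_dual} (a), \prettyref{prop:predual_complete} (c), \prettyref{cor:semi_montel_gen_mixed}, \prettyref{cor:semi_reflexive_gen_mixed}, \prettyref{cor:general_dixmier_ng_semi_reflexive} and \prettyref{thm:general_dixmier_ng}. Two facts will be used throughout: a normable space is bornological and its bornology $\mathcal{B}$ is of countable type, so \prettyref{prop:mixed=gen_mixed} yields $\widetilde{\gamma}=\gamma$ and $\operatorname{(B\tau B}\gamma)$ for $(X,\gamma)$ as soon as $(X,\tau)$ satisfies $\operatorname{(BBCl)}$ for some $\widetilde{\tau}\leq\tau$; and a completely normable space is complete, hence quasi-complete, while conversely a quasi-complete normable space is completely normable, since a norm-Cauchy sequence is bounded and therefore convergent.

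For the core triangle (a)$\Leftrightarrow$(b)$\Leftrightarrow$(c) I would argue as in \prettyref{cor:existence_cb_predual}. In (b)$\Rightarrow$(a), the prebidual isomorphism $\varphi\colon(X,\tau)\to(Y_{b}')_{b}'$ already exhibits $(Y_{b}',\varphi)$ as a predual of $(X,\tau)$, and $Y_{b}'$ is completely normable, hence Banach, so $(Y_{b}',\varphi)$ is a Banach predual. In (a)$\Rightarrow$(c), take a Banach predual $(Y,\varphi)$ (in particular a complete barrelled predual) and apply \prettyref{prop:predual_to_BBC_CNC} (a) with $\widetilde{\tau}\coloneqq\sigma_{\varphi}(X,Y)$ to obtain $\operatorname{(BBC)}$ and $\operatorname{(CNC)}$ for $\widetilde{\tau}$ and $\operatorname{(B\tau B}\widetilde{\gamma})$ for $(X,\widetilde{\gamma})$; moreover $(X,\tau)$ is topologically isomorphic to $(Y_{b}')_{b}'$, which is a Banach space because $Y$ is, so $(X,\tau)$ is completely normable, and $\widetilde{\gamma}=\gamma$ by \prettyref{prop:mixed=gen_mixed} (using $\widetilde{\tau}\leq\tau$ from \prettyref{rem:BBC_CNC} (c)), which gives $\operatorname{(B\tau B}\gamma)$. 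For (c)$\Rightarrow$(b), note $\gamma=\widetilde{\gamma}$, so $(X,\gamma)$ is a semi-Montel space by \prettyref{cor:semi_montel_gen_mixed}, and \prettyref{cor:general_dixmier_ng_semi_reflexive} applies with $Y\coloneqq(X,\gamma)$ and $\varphi\coloneqq\mathcal{I}$; its strong dual $(X,\gamma)_{b}'$ equals $(X_{\mathcal{B},\widetilde{\tau}}',\beta)$ by \prettyref{prop:predual_is_dual} (a), which is completely normable by \prettyref{prop:predual_complete} (c), so $((X,\gamma),\mathcal{I})$ is a semi-Montel prebidual with completely normable strong dual.

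It remains to weave in (d), (e) and (f). The implications (c)$\Rightarrow$(e) and (c)$\Rightarrow$(d) are immediate, since $\operatorname{(BBC)}$ implies $\operatorname{(BBCl)}$, $\widetilde{\tau}\leq\tau$ holds by \prettyref{rem:BBC_CNC} (c), and $(X,\gamma)$ is semi-Montel, hence semi-reflexive, by \prettyref{cor:semi_montel_gen_mixed}. For (d)$\Rightarrow$(e): here $(X,\gamma)=(X,\widetilde{\gamma})$ is semi-reflexive, satisfies $\operatorname{(B\tau B}\widetilde{\gamma})$ and $(X,\tau)$ satisfies $\operatorname{(BBCl)}$ for $\widetilde{\tau}$, so \prettyref{cor:semi_reflexive_gen_mixed} shows that $\mathcal{B}$ has a basis of absolutely convex $\sigma(X,(X,\widetilde{\tau})')$-compact sets, i.e.\ $(X,\tau)$ satisfies $\operatorname{(BBC)}$ for $\sigma(X,(X,\widetilde{\tau})')$ by \prettyref{rem:BBC_CNC} (b), which is (e). For (e)$\Rightarrow$(a): $(X,\tau)$ is quasi-complete and normable, so \prettyref{prop:BBC_CNC_normable} supplements the given $\operatorname{(BBC)}$ with $\operatorname{(CNC)}$ for a common locally convex Hausdorff topology $\widetilde{\tau}$, \prettyref{thm:general_dixmier_ng} then provides a topological isomorphism $\mathcal{I}\colon(X,\tau)\to(X_{\mathcal{B},\widetilde{\tau}}',\beta)_{b}'$, and $(X_{\mathcal{B},\widetilde{\tau}}',\beta)$ is completely normable by \prettyref{prop:predual_complete} (c), hence a Banach predual. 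Finally, for (e)$\Leftrightarrow$(f) I would invoke the equivalence (b)$\Leftrightarrow$(c) of \prettyref{prop:BBC_CNC_normable}: from (e), $(X,\tau)$ is quasi-complete and normable with $\operatorname{(BBC)}$, which furnishes a norm $\vertiii{\cdot}$ inducing $\tau$ and a locally convex Hausdorff topology $\widetilde{\tau}$ with $B_{\vertiii{\cdot}}$ $\widetilde{\tau}$-compact; conversely (f) makes $(X,\tau)$ normable and, by the same proposition, quasi-complete and satisfying $\operatorname{(BBC)}$, hence completely normable.

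I expect the only genuine difficulty to be organisational: keeping straight which locally convex Hausdorff topology — $\widetilde{\tau}$, $\sigma(X,(X,\widetilde{\tau})')$, $\gamma$, $\widetilde{\gamma}$, or $\gamma(\tau,\sigma_{\varphi}(X,Y))$ — each cited statement refers to, and in particular realising that the two hypotheses of \prettyref{cor:existence_banach_predual} (d) and (e) which omit $\operatorname{(CNC)}$ are harmless because for completely normable spaces $\operatorname{(CNC)}$ can be recovered, either from semi-reflexivity of $(X,\gamma)$ via \prettyref{cor:semi_reflexive_gen_mixed} or from quasi-completeness via \prettyref{prop:BBC_CNC_normable}. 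No analytic input beyond the already established results should be required.
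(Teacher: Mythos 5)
Your proposal is correct and takes essentially the same route as the paper, which likewise assembles the proof from \prettyref{cor:existence_cb_predual}, \prettyref{prop:predual_to_BBC_CNC}, \prettyref{prop:predual_complete} (c), \prettyref{prop:mixed=gen_mixed} and \prettyref{prop:BBC_CNC_normable}; your only deviation is routing (d) through (e) via \prettyref{cor:semi_reflexive_gen_mixed} rather than going to (a) directly, which is harmless. One small slip: in (a)$\Rightarrow$(c) a Banach predual $(Y,\varphi)$ gives $(X,\tau)\cong Y_{b}'$, not $(Y_{b}')_{b}'$, but the conclusion that $(X,\tau)$ is completely normable is unaffected.
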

\begin{proof}
(b)$\Rightarrow$(a), (c)$\Rightarrow$(d), (c)$\Rightarrow$(e) These implications follow from the proof of the corresponding implications of \prettyref{cor:existence_frechet_predual}.

(a)$\Rightarrow$(c) This implication follows from the proof of the implication (a)$\Rightarrow$(c) 
of \prettyref{cor:existence_frechet_predual} and the observation that the strong dual of a Banach predual $(Y,\varphi)$ 
of $(X,\tau)$ is completely normable.

(c)$\Rightarrow$(b), (d)$\Rightarrow$(a), (e)$\Rightarrow$(a) These implications follow from the proof of the corresponding implications 
of \prettyref{cor:existence_cb_predual} and the observation that $(X_{\mathcal{B},\widetilde{\tau}}',\beta)$ 
is completely normable by \prettyref{prop:predual_complete} (c), 
which coincides with $(X,\gamma)_{b}'$.

(e)$\Leftrightarrow$(f) This equivalence follows from \prettyref{prop:BBC_CNC_normable}.
\end{proof}

\section{Linearisation}
\label{sect:linearisation}

In this section we study necessary and sufficient conditions for the existence of a strong linearisation of a 
bornological function space. 
If $X=\F$ is a space of $\K$-valued functions on a non-empty set $\Omega$ that satisfies the assumptions 
of \prettyref{thm:general_dixmier_ng} or \prettyref{cor:general_dixmier_ng_semi_reflexive} 
and we choose $Y\coloneqq \F_{\mathcal{B},\widetilde{\tau}}'$ and $T\coloneqq\mathcal{I}$, then we only need one additional ingredient 
to obtain a strong linearisation of $\F$ from the tuple $(\F_{\mathcal{B},\widetilde{\tau}}',\mathcal{I})$, namely a suitable map 
$\delta\colon \Omega\to \F_{\mathcal{B},\widetilde{\tau}}'$ which fulfils $\mathcal{I}(f)\circ\delta=f$ for every $f\in\F$. 
In order to fulfil $\mathcal{I}(f)\circ\delta=f$ for every $f\in\F$, i.e.
\[
f(x)=(\mathcal{I}(f)\circ\delta)(x)=\mathcal{I}(f)(\delta(x))=\delta(x)(f),\quad x\in\Omega,
\]
for every $f\in\F$, the map $\delta(x)$ has to be the point evaluation functional $\delta_{x}$ given by $\delta_{x}(f)\coloneqq f(x)$ 
for every $x\in\Omega$ and $f\in\F$. Thus we obtain a strong linearisation of $\F$ if 
$\delta_{x}\in\F_{\mathcal{B},\widetilde{\tau}}'$ for every $x\in\Omega$ and 
arrive at the following results.

\begin{cor}\label{cor:scb_linearisation}
Let $(\F,\tau)$ be a bornological locally convex Hausdorff space of $\K$-valued functions on a non-empty set $\Omega$ satisfying 
$\operatorname{(BBC)}$ and $\operatorname{(CNC)}$ for some $\widetilde{\tau}$ and $\mathcal{B}$ the family of $\tau$-bounded sets. 
If $\Delta(x)\coloneqq\delta_{x}\in\F_{\mathcal{B},\widetilde{\tau}}'$ for all $x\in\Omega$, then 
$(\Delta,\F_{\mathcal{B},\widetilde{\tau}}',\mathcal{I})$ is a strong complete barrelled linearisation of $\F$. 
\end{cor}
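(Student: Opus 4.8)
The plan is to simply verify that the triple $(\Delta,\F_{\mathcal{B},\widetilde{\tau}}',\mathcal{I})$ meets \prettyref{defn:linearisation} (c) and (d), with all the heavy lifting already done by \prettyref{thm:general_dixmier_ng}. First I would invoke \prettyref{thm:general_dixmier_ng}: since $(\F,\tau)$ is bornological and satisfies $\operatorname{(BBC)}$ and $\operatorname{(CNC)}$ for $\widetilde{\tau}$, the space $(\F_{\mathcal{B},\widetilde{\tau}}',\beta)$ is a complete barrelled locally convex Hausdorff space and the evaluation map $\mathcal{I}\colon(\F,\tau)\to(\F_{\mathcal{B},\widetilde{\tau}}',\beta)_{b}'$ is a topological isomorphism. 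This already gives everything except the linearisation identity, and it identifies $Y\coloneqq\F_{\mathcal{B},\widetilde{\tau}}'$ as complete barrelled, so condition (d) of \prettyref{defn:linearisation} is in hand once we know we have a strong linearisation at all.

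The remaining point is the compatibility $\mathcal{I}(f)\circ\Delta=f$ for all $f\in\F$. By hypothesis $\Delta(x)=\delta_{x}\in\F_{\mathcal{B},\widetilde{\tau}}'$ for every $x\in\Omega$, so $\Delta$ is a well-defined map $\Omega\to\F_{\mathcal{B},\widetilde{\tau}}'$, and for every $f\in\F$ and $x\in\Omega$ the definition of the evaluation map $\mathcal{I}$ gives
\[
(\mathcal{I}(f)\circ\Delta)(x)=\mathcal{I}(f)(\delta_{x})=\delta_{x}(f)=f(x),
\]
so $\mathcal{I}(f)\circ\Delta=f$. Combined with the fact that $\mathcal{I}$ is a topological isomorphism onto $(\F_{\mathcal{B},\widetilde{\tau}}',\beta)_{b}'=(\F_{\mathcal{B},\widetilde{\tau}}')_{b}'$, this is exactly the statement that $(\Delta,\F_{\mathcal{B},\widetilde{\tau}}',\mathcal{I})$ is a strong linearisation of $\F$ in the sense of \prettyref{defn:linearisation} (a),(c). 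Since $\F_{\mathcal{B},\widetilde{\tau}}'$ is complete and barrelled by \prettyref{thm:general_dixmier_ng}, it is in fact a strong complete barrelled linearisation, which is the claim.

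There is essentially no obstacle here: this corollary is a bookkeeping consequence of \prettyref{thm:general_dixmier_ng} once one observes that the only possible choice of $\delta$ making $\mathcal{I}(f)\circ\delta=f$ hold is the point-evaluation assignment $x\mapsto\delta_{x}$, and that the hypothesis $\delta_{x}\in\F_{\mathcal{B},\widetilde{\tau}}'$ is precisely what makes that assignment land in $Y$. The one thing to double-check is notational: that the topology $\beta=\beta_{\mathcal{B},\widetilde{\tau}}$ carried by $\F_{\mathcal{B},\widetilde{\tau}}'$ in \prettyref{thm:general_dixmier_ng} is indeed the strong dual topology $\beta((\F_{\mathcal{B},\widetilde{\tau}}')',\F_{\mathcal{B},\widetilde{\tau}}')$ under the isomorphism $\mathcal{I}$, but this is built into the conclusion of \prettyref{thm:general_dixmier_ng} that $\mathcal{I}$ is a topological isomorphism onto $(\F_{\mathcal{B},\widetilde{\tau}}',\beta)_{b}'$.
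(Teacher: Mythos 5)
Your proposal is correct and matches the paper's own argument: the paper derives this corollary in the paragraph immediately preceding it by invoking Theorem \ref{thm:general_dixmier_ng} for the complete barrelled predual $(\F_{\mathcal{B},\widetilde{\tau}}',\mathcal{I})$ and observing that $\mathcal{I}(f)\circ\delta=f$ forces $\delta(x)=\delta_{x}$, so the hypothesis $\delta_{x}\in\F_{\mathcal{B},\widetilde{\tau}}'$ is exactly what is needed. Your verification of $(\mathcal{I}(f)\circ\Delta)(x)=\delta_{x}(f)=f(x)$ and the bookkeeping against Definition \ref{defn:linearisation} are precisely the paper's reasoning.
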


\begin{cor}\label{cor:scb_linearisation_semi_reflexive}
Let $(\F,\tau)$ be a bornological locally convex Hausdorff space of $\K$-valued functions on a non-empty set $\Omega$ 
satisfying $\operatorname{(BBCl)}$ and $\operatorname{(CNC)}$ for some 
$\widetilde{\tau}$ and $(\F,\widetilde{\gamma})$ a semi-reflexive space satisfying $\operatorname{(B\tau B}\widetilde{\gamma})$. 
If $\Delta(x)\coloneqq\delta_{x}\in(\F,\widetilde{\gamma})'=\F_{\mathcal{B},\widetilde{\tau}}'$ for all $x\in\Omega$, then 
$(\Delta,(\F,\widetilde{\gamma})',\mathcal{I})$ is a strong complete barrelled linearisation of $\F$.
\end{cor}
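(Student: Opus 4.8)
The plan is to reduce the statement to \prettyref{cor:general_dixmier_ng_semi_reflexive} and then to read off the linearisation identity directly from the definition of the evaluation map $\mathcal{I}$, in complete analogy with \prettyref{cor:scb_linearisation}.

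First I would clarify what the space $Y\coloneqq(\F,\widetilde{\gamma})'$ carries as a topology: by \prettyref{prop:predual_is_dual} (a) we have $(\F,\widetilde{\gamma})'=\F_{\mathcal{B},\widetilde{\tau}}'$ as linear spaces together with $\beta((\F,\widetilde{\gamma})',(\F,\widetilde{\gamma}))=\beta=\beta(\F_{\mathcal{B},\widetilde{\tau}}',(\F,\tau))$, so equipping $(\F,\widetilde{\gamma})'$ with this strong topology identifies it with $(\F,\widetilde{\gamma})_{b}'$. The hypotheses assumed here are precisely those of \prettyref{cor:general_dixmier_ng_semi_reflexive}, hence $Y=(\F,\widetilde{\gamma})_{b}'$ is a complete barrelled locally convex Hausdorff space and the evaluation map $\mathcal{I}\colon(\F,\tau)\to Y_{b}'$, $f\longmapsto[y'\mapsto y'(f)]$, is a topological isomorphism. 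In particular $T\coloneqq\mathcal{I}$ is an algebraic isomorphism $\F\to Y'$, so $(\Delta,Y,\mathcal{I})$ will be a strong complete barrelled linearisation of $\F$ once the identity $T(f)\circ\Delta=f$ is established.

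It then remains to verify that $\Delta$ maps into $Y$ and satisfies this identity. By assumption $\Delta(x)=\delta_{x}\in(\F,\widetilde{\gamma})'=Y$ for every $x\in\Omega$, so $\Delta\colon\Omega\to Y$ is well-defined, and for $f\in\F$ and $x\in\Omega$ the definition of $\mathcal{I}$ yields
\[
(T(f)\circ\Delta)(x)=\mathcal{I}(f)(\delta_{x})=\delta_{x}(f)=f(x),
\]
whence $T(f)\circ\Delta=f$ for all $f\in\F$. I do not expect a genuine difficulty here; the only point requiring care is the bookkeeping of the (bi)duals in the first step, namely confirming via \prettyref{prop:predual_is_dual} (a) that $(\F,\widetilde{\gamma})'$ equipped with $\beta$ is exactly the space $Y$ for which \prettyref{cor:general_dixmier_ng_semi_reflexive} delivers the topological isomorphism $\mathcal{I}\colon(\F,\tau)\to Y_{b}'$, and that $\delta_{x}$ really lies in $Y$, so that $\mathcal{I}(f)(\delta_{x})$ is defined and equals $\delta_{x}(f)$.
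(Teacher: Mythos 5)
Your proposal is correct and follows exactly the route the paper takes: the corollary is obtained by applying \prettyref{cor:general_dixmier_ng_semi_reflexive} (together with the identification $(\F,\widetilde{\gamma})'=\F_{\mathcal{B},\widetilde{\tau}}'$ from \prettyref{prop:predual_is_dual} (a)) to get the complete barrelled predual $((\F,\widetilde{\gamma})_{b}',\mathcal{I})$, and then checking the identity $\mathcal{I}(f)(\delta_{x})=\delta_{x}(f)=f(x)$, which is precisely the computation in the paragraph preceding \prettyref{cor:scb_linearisation}. No gaps.
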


\begin{rem}\label{rem:point_eval_in_predual}
Let $(\F,\tau)$ be a locally convex Hausdorff space of $\K$-valued functions on a non-empty set $\Omega$, $\mathcal{B}$ the family of 
$\tau$-bounded sets and $\widetilde{\tau}$ a locally convex Hausdorff topology on $\F$. 
If $\tau_{\operatorname{p}}\leq\widetilde{\tau}$, then $\delta_{x}\in\F_{\mathcal{B},\widetilde{\tau}}'$ for all $x\in\Omega$. 
Indeed, for every $x\in\Omega$ we have $\delta_{x}\in(\F,\tau_{\operatorname{p}})'$ by definition and thus 
$\delta_{x}\in(\F,\widetilde{\tau})'\subset \F_{\mathcal{B},\widetilde{\tau}}'$ since 
$\tau_{\operatorname{p}}\leq\widetilde{\tau}$.
\end{rem}

\prettyref{cor:scb_linearisation} and \prettyref{rem:point_eval_in_predual} also give us a simple sufficient criterion when a bornological space $(\F,\tau)$ of continuous functions
has a separable predual, which generalises \cite[2.2 Remark, p.~870]{mujica1991}. 

\begin{prop}\label{prop:separable}
Let $(\F,\tau)$ be a locally convex Hausdorff space of $\K$-valued continuous 
functions on a non-empty separable topological Hausdorff space $\Omega$ satisfying 
$\operatorname{(BBC)}$ and $\operatorname{(CNC)}$ for some $\tau_{\operatorname{p}}\leq\widetilde{\tau}$ 
and $\mathcal{B}$ the family of $\tau$-bounded sets. 
Then $(\F_{\mathcal{B},\widetilde{\tau}}',\mathcal{I})$ is a complete barrelled predual of 
$(\F,\tau)$ and $(\F_{\mathcal{B},\widetilde{\tau}}',\beta)$ is separable. 
\end{prop}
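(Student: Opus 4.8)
The plan is to deduce this directly from \prettyref{cor:scb_linearisation} together with \prettyref{rem:point_eval_in_predual}, and then to extract separability from the linearisation map $\Delta$. First I would observe that since $\tau_{\operatorname{p}}\leq\widetilde{\tau}$, \prettyref{rem:point_eval_in_predual} gives $\delta_{x}\in\F_{\mathcal{B},\widetilde{\tau}}'$ for every $x\in\Omega$. Moreover, the space $(\F,\widetilde{\gamma})$ automatically satisfies $\operatorname{(B\tau B}\widetilde{\gamma})$ whenever $(\F,\tau)$ satisfies $\operatorname{(BBC)}$ for $\widetilde{\tau}$; this follows from \prettyref{cor:semi_montel_gen_mixed} or can be checked directly, so the hypotheses of \prettyref{cor:scb_linearisation} are met. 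Hence $(\Delta,\F_{\mathcal{B},\widetilde{\tau}}',\mathcal{I})$ is a strong complete barrelled linearisation, in particular $(\F_{\mathcal{B},\widetilde{\tau}}',\mathcal{I})$ is a complete barrelled predual of $(\F,\tau)$ by \prettyref{thm:general_dixmier_ng}.

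Next I would turn to separability. By \prettyref{prop:equivalent_def_of_linearisation_tuple} (b) (applied to the linearisation just obtained), the span of $\{\Delta(x)\;|\;x\in\Omega\}=\{\delta_{x}\;|\;x\in\Omega\}$ is dense in $(\F_{\mathcal{B},\widetilde{\tau}}',\beta)$. So it suffices to show that the set $\{\delta_{x}\;|\;x\in\Omega\}$ is $\beta$-separable, which reduces — since a countable union of separable sets spanning a dense subspace gives a separable space, and the rational (or $\K$-rational) linear combinations of a countable dense subset of $\{\delta_{x}\}$ form a countable dense set — to showing that the map $\Delta\colon\Omega\to(\F_{\mathcal{B},\widetilde{\tau}}',\beta)$ is continuous when $\Omega$ carries its given topology. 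Given continuity of $\Delta$ and separability of $\Omega$, the image $\Delta(\Omega)$ is separable, its countable dense subset spans a $\beta$-dense subspace of $\F_{\mathcal{B},\widetilde{\tau}}'$, and taking $\K$-rational combinations of that countable set yields countable $\beta$-density.

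The main obstacle is therefore the continuity of $\Delta$, and here I would invoke that $\F$ consists of \emph{continuous} functions on $\Omega$: for a $\tau$-bounded set $B\subset\F$ and $x_{0}\in\Omega$, one has $\sup_{f\in B}|\delta_{x}(f)-\delta_{x_{0}}(f)|=\sup_{f\in B}|f(x)-f(x_{0})|$, and one needs this to be small for $x$ near $x_{0}$. This is precisely an equicontinuity statement for $B$ at $x_{0}$; I expect it to follow from the structure imposed by $\operatorname{(BBC)}$ — namely that $B$ is contained in an absolutely convex $\tau$-bounded $\widetilde{\tau}$-compact set $B_{1}$, on which $\widetilde{\tau}$ and hence (since $\tau_{\operatorname{p}}\leq\widetilde{\tau}$) the pointwise topology is compact, combined with an Ascoli-type argument using continuity of the individual $f\in B_{1}$. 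If a direct equicontinuity argument is awkward, the alternative is to note that $\mathcal{J}\colon(\F,\tau)\to((\F_{\mathcal{B},\widetilde{\tau}}',\beta)_{b}')$ is a topological isomorphism by \prettyref{thm:general_dixmier_ng}, so $\beta$-neighbourhoods of $\delta_{x_{0}}$ are polars of $\tau$-bounded sets, and continuity of $\Delta$ at $x_{0}$ is exactly the assertion that each $f\in B$ varies continuously at $x_{0}$, uniformly over $B$ — i.e. equicontinuity of $\tau$-bounded subsets of $\F$, which I would record as the key point and prove via the $\widetilde{\tau}$-compactness of $B_{1}$.
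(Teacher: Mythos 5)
The first half of your argument (that $(\F_{\mathcal{B},\widetilde{\tau}}',\mathcal{I})$ is a complete barrelled predual) is fine and matches the paper: \prettyref{rem:point_eval_in_predual} gives $\delta_{x}\in\F_{\mathcal{B},\widetilde{\tau}}'$ and \prettyref{cor:scb_linearisation} applies. (Your aside about $(\F,\widetilde{\gamma})$ satisfying $\operatorname{(B\tau B}\widetilde{\gamma})$ is both unnecessary --- \prettyref{cor:scb_linearisation} does not require it --- and not justified by \prettyref{cor:semi_montel_gen_mixed}, which \emph{assumes} $\operatorname{(B\tau B}\widetilde{\gamma})$ rather than deriving it from $\operatorname{(BBC)}$.)

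The separability argument, however, has a genuine gap at exactly the point you flag as ``the main obstacle'': continuity of $\Delta\colon\Omega\to(\F_{\mathcal{B},\widetilde{\tau}}',\beta)$, equivalently equicontinuity of every $\tau$-bounded set, does \emph{not} follow from the hypotheses. The condition $\tau_{\operatorname{p}}\leq\widetilde{\tau}$ only gives that the sets $B_{1}$ from $\operatorname{(BBC)}$ are $\tau_{\operatorname{p}}$-compact, and a pointwise-compact set of continuous functions need not be equicontinuous (e.g.\ tent functions of height $1$ supported on $[0,1/n]$ together with $0$ on $\Omega=[0,1]$); Ascoli requires compactness for $\tau_{\operatorname{co}}$, which is precisely why \prettyref{prop:point_eval_kR} assumes $\tau_{\operatorname{co}}\leq\widetilde{\tau}$ and a $gk_{\R}$-space, and why \prettyref{thm:existence_scb_cont_linearisation} lists equicontinuity of the bounded sets as an \emph{extra} condition equivalent to the existence of a \emph{continuous} linearisation. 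The fix is that $\beta$-continuity of $\Delta$ is not needed: choose a countable dense $D\subset\Omega$; any element of $((\F_{\mathcal{B},\widetilde{\tau}}',\beta))'$ is of the form $\mathcal{I}(f)$ with $f\in\F$ by \prettyref{thm:general_dixmier_ng}, and if it vanishes on $\{\delta_{x}\;|\;x\in D\}$ then the continuous function $f$ vanishes on the dense set $D$, hence $f=0$; by the bipolar theorem the span of $\{\delta_{x}\;|\;x\in D\}$ is therefore already $\beta$-dense, and its $\K$-rational combinations give a countable $\beta$-dense set. (Equivalently, your $\Delta$ \emph{is} continuous into $(\F_{\mathcal{B},\widetilde{\tau}}',\sigma(\F_{\mathcal{B},\widetilde{\tau}}',\F))$ simply because each $f\in\F$ is continuous, and since the span of $\Delta(D)$ is convex its weak and $\beta$-closures coincide, $\beta$ being a topology of the dual pair; but the strong continuity you aim for is unavailable here.)
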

\begin{proof}
$(\F_{\mathcal{B},\widetilde{\tau}}',\mathcal{I})$ is a complete barrelled predual of 
$(\F,\tau)$ by \prettyref{cor:scb_linearisation} and \prettyref{rem:point_eval_in_predual}. 
By the bipolar theorem the span of $\{\delta_{x}\;|\;x\in D\}$ is $\beta$-dense 
in $\F_{\mathcal{B},\widetilde{\tau}}'$ for any dense subspace $D$ of $\Omega$ since $\F$ is a space of continuous functions. This implies our statement on separability. 
\end{proof}

Next, we show that our sufficient conditions for the existence of a strong complete barrelled linearisation are also necessary. 

\begin{thm}\label{thm:existence_scb_linearisation}
Let $(\F,\tau)$ be a bornological locally convex Hausdorff space of $\K$-valued functions on a non-empty set $\Omega$ 
and $\mathcal{B}$ the family of $\tau$-bounded sets.
Then the following assertions are equivalent.
\begin{enumerate}
\item[(a)] $(\F,\tau)$ admits a strong complete barrelled linearisation.
\item[(b)] $(\F,\tau)$ has a semi-Montel prebidual $(Y,\varphi)$ such that $Y_{b}'$ is complete and for every $x\in\Omega$ 
there is a unique $y_{x}'\in Y'$ such that $\delta_{x}=\varphi(\cdot)(y_{x}')$.
\item[(c)] $(\F,\tau)$ satisfies $\operatorname{(BBC)}$ and $\operatorname{(CNC)}$ 
for some $\widetilde{\tau}$ such that $\tau_{\operatorname{p}}\leq\widetilde{\tau}$ and $(\F,\widetilde{\gamma})$ 
satisfies $\operatorname{(B\tau B}\widetilde{\gamma})$.
\item[(d)] $(\F,\tau)$ satisfies $\operatorname{(BBCl)}$ and $\operatorname{(CNC)}$ 
for some $\widetilde{\tau}$ such that $\tau_{\operatorname{p}}\leq\widetilde{\tau}$ 
and $(\F,\widetilde{\gamma})$ is semi-reflexive and satisfies $\operatorname{(B\tau B}\widetilde{\gamma})$.
\item[(e)] $(\F,\tau)$ satisfies $\operatorname{(BBC)}$ and $\operatorname{(CNC)}$ for some $\widetilde{\tau}$ such that $\tau_{\operatorname{p}}\leq\widetilde{\tau}$.
\item[(f)] $(\F,\tau)$ satisfies $\operatorname{(BBC)}$ and $\operatorname{(CNC)}$ for some $\widetilde{\tau}$ such that 
$\delta_{x}\in \F_{\mathcal{B},\widetilde{\tau}}'$ for all $x\in\Omega$.
\end{enumerate}
\end{thm}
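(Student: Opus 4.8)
The plan is to prove the theorem by chaining together the structural results already established, essentially by adding the extra ingredient "$\delta_{x}\in\F_{\mathcal{B},\widetilde{\tau}}'$ for all $x\in\Omega$" (equivalently, by \prettyref{rem:point_eval_in_predual}, the condition $\tau_{\operatorname{p}}\leq\widetilde{\tau}$) to the cycle of equivalences in \prettyref{cor:existence_cb_predual}. First I would observe that (c)$\Rightarrow$(d), (c)$\Rightarrow$(e), (e)$\Rightarrow$(f) are immediate: (c)$\Rightarrow$(d) uses \prettyref{cor:semi_montel_gen_mixed} plus the fact that semi-Montel spaces are semi-reflexive exactly as in \prettyref{cor:existence_cb_predual}; (c)$\Rightarrow$(e) is trivial; and (e)$\Rightarrow$(f) follows from \prettyref{rem:point_eval_in_predual}, since $\tau_{\operatorname{p}}\leq\widetilde{\tau}$ forces $\delta_{x}\in(\F,\widetilde{\tau})'\subset\F_{\mathcal{B},\widetilde{\tau}}'$.

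Next I would close the loop through (a) and (b). For (f)$\Rightarrow$(a): given $\operatorname{(BBC)}$ and $\operatorname{(CNC)}$ for some $\widetilde{\tau}$ with all $\delta_{x}\in\F_{\mathcal{B},\widetilde{\tau}}'$, \prettyref{cor:scb_linearisation} directly yields the strong complete barrelled linearisation $(\Delta,\F_{\mathcal{B},\widetilde{\tau}}',\mathcal{I})$. For (a)$\Rightarrow$(b): if $(\delta,Y,T)$ is a strong complete barrelled linearisation, then $(Y,T)$ is a complete barrelled predual, so by the implication (a)$\Rightarrow$(b) of \prettyref{cor:existence_cb_predual} there is a semi-Montel prebidual; one must check it can be taken to be $(Y,T)$ itself. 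Indeed, the construction in \prettyref{cor:existence_cb_predual} takes $\widetilde{\tau}\coloneqq\sigma_{T}(\F,Y)$ and then $(\F,\widetilde{\gamma}_{T})$ is the desired semi-Montel space with $(\F,\widetilde{\gamma}_{T})_{b}'$ complete (via \prettyref{prop:predual_to_BBC_CNC}); the extra clause of (b) is satisfied because for each $x\in\Omega$ we have $\delta_{x}=T(\cdot)(\delta(x))\in(\F,\sigma_{T}(\F,Y))'$, so \prettyref{prop:predual_to_BBC_CNC} (c) provides the required (unique) $y_{x}'\coloneqq\delta(x)\in Y'$ with $\delta_{x}=T(\cdot)(y_{x}')$. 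For (b)$\Rightarrow$(c): from a semi-Montel prebidual $(Y,\varphi)$ with $Y_{b}'$ complete, \prettyref{cor:existence_cb_predual} (b)$\Rightarrow$(a)$\Rightarrow$(c) already gives $\operatorname{(BBC)}$, $\operatorname{(CNC)}$ for $\widetilde{\tau}\coloneqq\sigma_{\varphi}(\F,Y)$ and $\operatorname{(B\tau B}\widetilde{\gamma})$; it remains to verify $\tau_{\operatorname{p}}\leq\widetilde{\tau}$, which holds because for each $x$ the uniqueness hypothesis gives $y_{x}'\in Y'$ with $\delta_{x}=\varphi(\cdot)(y_{x}')$, hence $\delta_{x}\in(\F,\sigma_{\varphi}(\F,Y))'$, and since $\tau_{\operatorname{p}}$ is generated by the seminorms $f\mapsto|\delta_{x}(f)|$ this yields $\tau_{\operatorname{p}}\leq\sigma_{\varphi}(\F,Y)=\widetilde{\tau}$.

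Finally I would close off (d)$\Rightarrow$(a) and (f)$\Rightarrow$(c)-type statements: (d)$\Rightarrow$(a) follows from \prettyref{cor:scb_linearisation_semi_reflexive}, noting $\tau_{\operatorname{p}}\leq\widetilde{\tau}$ gives $\delta_{x}\in(\F,\widetilde{\tau})'=(\F,\widetilde{\gamma})'$ by \prettyref{prop:predual_is_dual} (a); and to return from (f) into the main cycle one uses (f)$\Rightarrow$(a) already shown together with (a)$\Rightarrow$(b)$\Rightarrow$(c). The only genuinely delicate point — and the step I expect to cost the most care — is (a)$\Rightarrow$(b): one must make sure that the semi-Montel prebidual produced is compatible with the \emph{given} linearisation map $\delta$, i.e.~that $\delta(x)$ really is the functional $y_{x}'$ witnessing $\delta_{x}=\varphi(\cdot)(y_{x}')$, and that uniqueness of $y_{x}'$ is genuine (it is, by \prettyref{prop:predual_to_BBC_CNC} (c), which identifies $(\F,\widetilde{\gamma}_{\varphi})'$ with $\{\varphi(\cdot)(y)\mid y\in Y\}$ bijectively). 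Everything else is bookkeeping on top of \prettyref{cor:existence_cb_predual}, \prettyref{cor:scb_linearisation}, \prettyref{cor:scb_linearisation_semi_reflexive}, \prettyref{rem:point_eval_in_predual} and \prettyref{prop:predual_to_BBC_CNC}.
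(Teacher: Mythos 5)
Your proposal is correct and follows essentially the same route as the paper: both reduce the theorem to \prettyref{cor:existence_cb_predual} together with the observation (via \prettyref{rem:point_eval_in_predual}, \prettyref{cor:scb_linearisation}, \prettyref{cor:scb_linearisation_semi_reflexive} and \prettyref{prop:predual_to_BBC_CNC}) that the point evaluations lie in the candidate predual precisely when $\tau_{\operatorname{p}}\leq\widetilde{\tau}$. The only difference is the orientation of the (a)--(b)--(c) triangle (you prove (a)$\Rightarrow$(b)$\Rightarrow$(c) where the paper proves (b)$\Rightarrow$(a)$\Rightarrow$(c)$\Rightarrow$(b)), plus two harmless notational slips (writing $y_{x}'\coloneqq\delta(x)\in Y'$ instead of its image $\delta_{x}$ under the canonical isomorphism of \prettyref{prop:predual_to_BBC_CNC} (b), and asserting $(\F,\widetilde{\tau})'=(\F,\widetilde{\gamma})'$ where only the inclusion $\subset$ is available and needed), neither of which affects the argument.
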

\begin{proof}
(b)$\Rightarrow$(a) By the proof of \prettyref{cor:existence_cb_predual} the tuple $(Y_{b}',\varphi)$ is a complete barrelled 
predual of $(\F,\tau)$. We set $\delta\colon\Omega\to Y'$, $\delta(x)\coloneqq y_{x}'$. Then we have
\[
(\varphi(f)\circ\delta)(x)=\varphi(f)(y_{x}')=\delta_{x}(f)=f(x)
\]
for all $f\in\F$ and $x\in\Omega$. Hence $(\delta,Y_{b}',\varphi)$ is a strong complete barrelled linearisation of $\F$. 

(a)$\Rightarrow$(c) Since $(\F,\tau)$ admits a strong complete barrelled linearisation, there are a 
complete barrelled locally convex Hausdorff space $Y$, a map $\delta\colon\Omega\to Y$ and a topological isomorphism $T\colon(\F,\tau)\to Y_{b}'$ such that $T(f)\circ\delta=f$ for all $f\in\F$. 
Then $(\F,\tau)$ satisfies $\operatorname{(BBC)}$ and $\operatorname{(CNC)}$ for $\widetilde{\tau}\coloneqq\sigma_{T}(\F,Y)$ and 
$(\F,\widetilde{\gamma})$ satisfies $\operatorname{(B\tau B}\widetilde{\gamma})$ by \prettyref{prop:predual_to_BBC_CNC} (a). 
From $T(f)(\delta(x))=f(x)$ for all $f\in\F$ and $x\in\Omega$, we deduce that $\tau_{\operatorname{p}}$ and $\sigma_{T}(\F,Y_{0})$ coincide 
on $\F$ where $Y_{0}$ denotes the span of $\{\delta(x)\;|\;x\in\Omega\}$ which is dense in $Y$ 
by \prettyref{prop:equivalent_def_of_linearisation_dense}, and $\sigma_{T}(\F,Y_{0})$ is defined 
by the system of seminorms 
\[
p_{N}(f)\coloneqq\sup_{y\in N}|T(f)(y)|,\quad f\in \F,
\]
for finite sets $N\subset Y_{0}$. Hence we have $\tau_{\operatorname{p}}=\sigma_{T}(\F,Y_{0})\leq\sigma_{T}(\F,Y)=\widetilde{\tau}$.

(c)$\Rightarrow$(b) Due to \prettyref{cor:semi_montel_gen_mixed} $(\F,\widetilde{\gamma})$ is a semi-Montel space and 
thus semi-reflexive. It follows from \prettyref{cor:scb_linearisation_semi_reflexive} 
and \prettyref{rem:point_eval_in_predual} with $Y\coloneqq (\F,\widetilde{\gamma})$ and $\varphi\coloneqq\mathcal{I}$ 
that $(Y,\varphi)$ is a semi-Montel prebidual of $(\F,\tau)$ such that $Y_{b}'$ is a complete barrelled space and 
$\delta_{x}=\varphi(\cdot)(\delta_{x})$ with $\delta_{x}\in Y'$. Suppose that for $x\in\Omega$ there is another $y_{x}'\in Y'$ 
such that $\delta_{x}=\varphi(\cdot)(y_{x}')$. This implies that 
$\Phi_{\varphi}(y_{x}')=\Phi_{\varphi}(\delta_{x})$ for the map $\Phi_{\varphi}\colon Y_{b}'\to (\F,\tau)_{b}'$ 
from \prettyref{prop:predual_into_dual} and thus $y_{x}'=\delta_{x}$ by \prettyref{prop:predual_into_dual}. 

(c)$\Rightarrow$(d) This implication follows from the proof of \prettyref{cor:existence_cb_predual}.

(d)$\Rightarrow$(a) This implication follows from \prettyref{cor:scb_linearisation_semi_reflexive} 
and \prettyref{rem:point_eval_in_predual} with $Y\coloneqq (\F,\widetilde{\gamma})$ and $\varphi\coloneqq\mathcal{I}$.

(c)$\Rightarrow$(e) This implication is obvious. 

(e)$\Rightarrow$(f) We only need to show that $\delta_{x}\in \F_{\mathcal{B},\widetilde{\tau}}'$ for all $x\in\Omega$, which is a 
consequence of \prettyref{rem:point_eval_in_predual}.

(f)$\Rightarrow$(a) This implication follows from \prettyref{cor:scb_linearisation}.
\end{proof}

\begin{cor}\label{cor:existence_scbDF_linearisation}
Let $(\F,\tau)$ be a bornological locally convex Hausdorff space of $\K$-valued functions on a non-empty set $\Omega$ 
and $\mathcal{B}$ the family of $\tau$-bounded sets.
Then the following assertions are equivalent.
\begin{enumerate}
\item[(a)] $(\F,\tau)$ admits a strong complete barrelled DF-linearisation.
\item[(b)] $(\F,\tau)$ has a semi-Montel prebidual $(Y,\varphi)$ such that $Y_{b}'$ is a complete DF-space and for every $x\in\Omega$ 
there is a unique $y_{x}'\in Y'$ such that $\delta_{x}=\varphi(\cdot)(y_{x}')$.
\item[(c)] $(\F,\tau)$ is a Fr\'echet space satisfying $\operatorname{(BBC)}$ and $\operatorname{(CNC)}$ 
for some $\widetilde{\tau}$ such that $\tau_{\operatorname{p}}\leq\widetilde{\tau}$ 
and $(\F,\widetilde{\gamma})$ satisfies $\operatorname{(B\tau B}\widetilde{\gamma})$.
\item[(d)] $(\F,\tau)$ is a Fr\'echet space satisfying $\operatorname{(BBCl)}$ and $\operatorname{(CNC)}$ 
for some $\widetilde{\tau}$ such that $\tau_{\operatorname{p}}\leq\widetilde{\tau}$ and $(\F,\widetilde{\gamma})$ 
is semi-reflexive and satisfies $\operatorname{(B\tau B}\widetilde{\gamma})$.
\item[(e)] $(\F,\tau)$ is a Fr\'echet space satisfying $\operatorname{(BBC)}$ and $\operatorname{(CNC)}$ for some $\widetilde{\tau}$ such that $\tau_{\operatorname{p}}\leq\widetilde{\tau}$.
\item[(f)] $(\F,\tau)$ is a Fr\'echet space satisfying $\operatorname{(BBC)}$ and $\operatorname{(CNC)}$ for some $\widetilde{\tau}$ such that $\delta_{x}\in \F_{\mathcal{B},\widetilde{\tau}}'$ for all $x\in\Omega$.
\end{enumerate}
\end{cor}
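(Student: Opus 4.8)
The plan is to run the implication scheme of \prettyref{thm:existence_scb_linearisation} line by line and to superimpose on it the Fr\'echet--DF bookkeeping already carried out in \prettyref{cor:existence_cbDF_predual}. Two facts will be invoked repeatedly: (i) if $\F$ is a Fr\'echet space, then $\F_{\mathcal{B},\widetilde{\tau}}'$ equipped with $\beta$ is a complete barrelled DF-space by the proof of \cite[5.~Corollary (b), p.~117--118]{bierstedt1992}, and it coincides with $(\F,\widetilde{\gamma})_{b}'$ whenever $(\F,\widetilde{\gamma})$ satisfies $\operatorname{(B\tau B}\widetilde{\gamma})$ by \prettyref{prop:predual_is_dual} (a); and (ii) conversely, the strong dual of a DF-space is a Fr\'echet space by \cite[12.4.2 Theorem, p.~258]{jarchow1981}.

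First I would treat (a)$\Rightarrow$(c): given a strong complete barrelled DF-linearisation $(\delta,Y,T)$, apply \prettyref{prop:predual_to_BBC_CNC} (a) with $\widetilde{\tau}\coloneqq\sigma_{T}(\F,Y)$ and repeat the argument of \prettyref{thm:existence_scb_linearisation} (a)$\Rightarrow$(c) to obtain $\operatorname{(BBC)}$, $\operatorname{(CNC)}$, $\tau_{\operatorname{p}}\leq\widetilde{\tau}$ and $\operatorname{(B\tau B}\widetilde{\gamma})$; then $\F\cong Y_{b}'$ is a Fr\'echet space by (ii). For (b)$\Rightarrow$(a), \prettyref{thm:existence_scb_linearisation} already furnishes the strong complete barrelled linearisation $(\delta,Y_{b}',\varphi)$ with $\delta(x)\coloneqq y_{x}'$; since $Y$ is semi-Montel it is semi-reflexive, hence distinguished, so $Y_{b}'$ is barrelled, and being a complete DF-space by hypothesis it is a complete barrelled DF-space. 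The remaining implications (c)$\Rightarrow$(b), (c)$\Rightarrow$(d), (d)$\Rightarrow$(a), (c)$\Rightarrow$(e), (e)$\Rightarrow$(f) and (f)$\Rightarrow$(a) are obtained by copying the corresponding step of \prettyref{thm:existence_scb_linearisation} --- which produces either the semi-Montel prebidual $((\F,\widetilde{\gamma}),\mathcal{I})$ or one of the strong complete barrelled linearisations $(\Delta,\F_{\mathcal{B},\widetilde{\tau}}',\mathcal{I})$, $(\Delta,(\F,\widetilde{\gamma})',\mathcal{I})$ via \prettyref{cor:scb_linearisation}, \prettyref{cor:scb_linearisation_semi_reflexive} and \prettyref{rem:point_eval_in_predual} --- and then invoking (i) to see that the predual appearing there is a complete barrelled DF-space (for (f)$\Rightarrow$(a) one uses $\operatorname{(BBC)}$ and $\operatorname{(CNC)}$ as in \prettyref{cor:scb_linearisation}, while (e)$\Rightarrow$(f) is just \prettyref{rem:point_eval_in_predual} and (c)$\Rightarrow$(e) is trivial). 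The uniqueness of $y_{x}'$ in (b) is handled exactly as in \prettyref{thm:existence_scb_linearisation} (c)$\Rightarrow$(b) by means of \prettyref{prop:predual_into_dual}.

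I do not anticipate a genuine obstacle: the corollary is essentially the intersection of \prettyref{thm:existence_scb_linearisation} with \prettyref{cor:existence_cbDF_predual}, and every ingredient has already been isolated. The only point deserving a moment's care is that the hypothesis $\operatorname{(B\tau B}\widetilde{\gamma})$ must be present wherever the identification $\F_{\mathcal{B},\widetilde{\tau}}'=(\F,\widetilde{\gamma})_{b}'$ is used --- that is, in (c)$\Rightarrow$(b) and (d)$\Rightarrow$(a), where it is part of the assumption --- whereas in (e)$\Rightarrow$(f)$\Rightarrow$(a) the predual is taken directly to be $\F_{\mathcal{B},\widetilde{\tau}}'$ through \prettyref{cor:scb_linearisation}, so no such identification is needed. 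The fact that $(\F_{\mathcal{B},\widetilde{\tau}}',\beta)$ is a complete barrelled DF-space rests solely on $\F$ being a Fr\'echet space, exactly as in \prettyref{cor:existence_cbDF_predual}.
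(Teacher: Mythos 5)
Your proposal is correct and follows exactly the route the paper takes: the paper's own proof of this corollary is the one-line remark that it follows from the proofs of \prettyref{thm:existence_scb_linearisation} and \prettyref{cor:existence_cbDF_predual}, and your write-up is precisely that combination spelled out, with the same two added observations (the strong dual of a DF-space is Fr\'echet for (a)$\Rightarrow$(c), and $(\F_{\mathcal{B},\widetilde{\tau}}',\beta)=(\F,\widetilde{\gamma})_{b}'$ is a complete barrelled DF-space when $\F$ is Fr\'echet for the converse directions). Your closing remark about where $\operatorname{(B\tau B}\widetilde{\gamma})$ is actually needed is accurate as well.
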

\begin{proof}
This statement follows from the proofs of \prettyref{thm:existence_scb_linearisation} and \prettyref{cor:existence_cbDF_predual}. 
\end{proof}

\begin{cor}\label{cor:existence_sF_linearisation}
Let $(\F,\tau)$ be a bornological locally convex Hausdorff space of $\K$-valued functions on a non-empty set $\Omega$ 
and $\mathcal{B}$ the family of $\tau$-bounded sets.
Then the following assertions are equivalent.
\begin{enumerate}
\item[(a)] $(\F,\tau)$ admits a strong Fr\'echet linearisation.
\item[(b)] $(\F,\tau)$ has a semi-Montel prebidual $(Y,\varphi)$ such that $Y_{b}'$ is a Fr\'echet space and for every $x\in\Omega$ 
there is a unique $y_{x}'\in Y'$ such that $\delta_{x}=\varphi(\cdot)(y_{x}')$.
\item[(c)] $(\F,\tau)$ is a complete DF-space satisfying $\operatorname{(BBC)}$ and $\operatorname{(CNC)}$ 
for some $\widetilde{\tau}$ such that $\tau_{\operatorname{p}}\leq\widetilde{\tau}$ and $(\F,\gamma)$ satisfies 
$\operatorname{(B\tau B}\gamma)$.
\item[(d)] $(\F,\tau)$ is a complete DF-space satisfying $\operatorname{(BBCl)}$ and $\operatorname{(CNC)}$ for some $\widetilde{\tau}$ 
such that $\tau_{\operatorname{p}}\leq\widetilde{\tau}\leq\tau$ and $(\F,\gamma)$ is semi-reflexive. 
\item[(e)] $(\F,\tau)$ is a complete DF-space satisfying $\operatorname{(BBC)}$ and $\operatorname{(CNC)}$ 
for some $\widetilde{\tau}$ such that $\tau_{\operatorname{p}}\leq\widetilde{\tau}$. 
\item[(f)] $(\F,\tau)$ is a complete DF-space satisfying $\operatorname{(BBC)}$ and $\operatorname{(CNC)}$ for some $\widetilde{\tau}$ such that $\delta_{x}\in \F_{\mathcal{B},\widetilde{\tau}}'$ for all $x\in\Omega$.
\item[(g)] $(\F,\tau)$ is a complete DF-space satisfying $\operatorname{(BBC)}$ for some $\widetilde{\tau}$ such that 
$\tau_{\operatorname{p}}\leq\widetilde{\tau}$ and the Fr\'echet space $(\F_{\mathcal{B},\widetilde{\tau}}',\beta)$ is distinguished. 
\item[(h)] $(\F,\tau)$ is a complete DF-space satisfying $\operatorname{(BBC)}$ for $\tau_{\operatorname{p}}$ 
and the Fr\'echet space $(\F_{\mathcal{B},\tau_{\operatorname{p}}}',\beta)$ is distinguished. 
\end{enumerate}
\end{cor}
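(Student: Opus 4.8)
The plan is to treat this corollary as the Fr\'echet refinement of \prettyref{thm:existence_scb_linearisation}, in the same way that \prettyref{cor:existence_frechet_predual} refines \prettyref{cor:existence_cb_predual}. A strong Fr\'echet linearisation is a strong complete barrelled linearisation $(\delta,Y,T)$ in which $Y$ is additionally a Fr\'echet space, so $(Y,T)$ is then a Fr\'echet predual of $(\F,\tau)$; conversely, the predual produced in all the constructions below is $(\F_{\mathcal{B},\widetilde{\tau}}',\beta)$, which by \prettyref{prop:predual_complete} (b) and \prettyref{prop:predual_is_dual} (a) is a Fr\'echet space as soon as $(\F,\tau)$ is a complete DF-space. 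Since the bornology of a DF-space is of countable type, \prettyref{prop:mixed=gen_mixed} together with \prettyref{rem:BBC_CNC} (c) gives $\gamma=\widetilde{\gamma}$ and $\operatorname{(B\tau B}\gamma)$ for free whenever $\operatorname{(BBCl)}$ holds, which is what permits writing $\gamma$ in place of $\widetilde{\gamma}$ in (c)--(f).

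First I would establish the equivalence of (a)--(f) by running the proofs of \prettyref{thm:existence_scb_linearisation} and \prettyref{cor:existence_frechet_predual} in parallel: the implication (a)$\Rightarrow$(c) proceeds as in \prettyref{thm:existence_scb_linearisation} (a)$\Rightarrow$(c) (take $\widetilde{\tau}\coloneqq\sigma_{T}(\F,Y)$, note $\tau_{\operatorname{p}}=\sigma_{T}(\F,Y_{0})\leq\widetilde{\tau}$ for the dense span $Y_{0}$ of $\{\delta(x)\mid x\in\Omega\}$, invoke \prettyref{prop:predual_to_BBC_CNC} (a), and observe $(\F,\tau)\cong Y_{b}'$ is a complete DF-space because $Y$ is Fr\'echet, as in \prettyref{cor:existence_frechet_predual} (a)$\Rightarrow$(c)); the remaining implications among (b)--(f) are the corresponding implications of \prettyref{cor:existence_frechet_predual} and \prettyref{thm:existence_scb_linearisation} combined, with $\delta_{x}\in\F_{\mathcal{B},\widetilde{\tau}}'$ for all $x\in\Omega$ supplied by \prettyref{rem:point_eval_in_predual} (when $\tau_{\operatorname{p}}\leq\widetilde{\tau}$), \prettyref{cor:scb_linearisation} furnishing the linearisation, and the uniqueness of $y_{x}'$ in (b) coming from the injectivity of $y\mapsto\varphi(\cdot)(y)$ in \prettyref{prop:predual_into_dual}.

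The genuinely new part is linking (g) and (h) to the rest. For (c)$\Rightarrow$(g): by the already-established (a)$\Leftrightarrow$(c) there is a Fr\'echet predual $(Y,\varphi)$, and with $\widetilde{\tau}\coloneqq\sigma_{\varphi}(\F,Y)$ the map $\kappa_{\varphi}\circ\mathcal{J}_{Y}$ of \prettyref{prop:predual_to_BBC_CNC} (b) is a topological isomorphism $Y\to(\F_{\mathcal{B},\widetilde{\tau}}',\beta)$, whence $(\F_{\mathcal{B},\widetilde{\tau}}',\beta)$ is a distinguished Fr\'echet space because its strong dual is topologically isomorphic to the bornological space $(\F,\tau)$, so \cite[Theorem 8.3.44, p.~261]{bonet1987} applies, and $\tau_{\operatorname{p}}\leq\widetilde{\tau}$ again by the (a)$\Rightarrow$(c) argument. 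For (g)$\Rightarrow$(h): $\operatorname{(BBC)}$ for $\widetilde{\tau}$ with $\tau_{\operatorname{p}}\leq\widetilde{\tau}$ implies $\operatorname{(BBC)}$ for $\tau_{\operatorname{p}}$ by \prettyref{rem:BBC_CNC} (d), and $\F_{\mathcal{B},\tau_{\operatorname{p}}}'=\F_{\mathcal{B},\widetilde{\tau}}'$ with the same strong topology by \prettyref{rem:predual_independent}, so distinguishedness transfers. For (h)$\Rightarrow$(a) I would follow \prettyref{cor:existence_frechet_predual} (f)$\Rightarrow$(a): Mujica's theorem gives a topological isomorphism $(\F,\tau)\to(\F_{\mathcal{B},\tau_{\operatorname{p}}}',\beta)_{i}'$ onto the inductive dual, distinguishedness of the Fr\'echet space $E\coloneqq(\F_{\mathcal{B},\tau_{\operatorname{p}}}',\beta)$ gives $E_{i}'=E_{b}'$ via \cite[Observation 8.3.40 (b), p.~260]{bonet1987}, and since $\delta_{x}\in(\F,\tau_{\operatorname{p}})'\subset\F_{\mathcal{B},\tau_{\operatorname{p}}}'$ for every $x\in\Omega$, the triple $(\Delta,\F_{\mathcal{B},\tau_{\operatorname{p}}}',\mathcal{I})$ with $\Delta(x)\coloneqq\delta_{x}$ is a strong linearisation whose target is a Fr\'echet space by \prettyref{prop:predual_complete} (b).

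The main obstacle I expect is bookkeeping rather than conceptual: one has to keep the identifications of $(\F_{\mathcal{B},\widetilde{\tau}}',\beta)$ consistent across the two roles played by $\widetilde{\tau}$ --- the topology $\sigma_{\varphi}(\F,Y)$ extracted from a given linearisation versus a generic $\widetilde{\tau}$ with $\tau_{\operatorname{p}}\leq\widetilde{\tau}$ --- and check in each construction that the point evaluations land in the right dual and that the predual is genuinely a Fr\'echet space, which hinges on $(\F,\tau)$ being a complete DF-space. The one step needing real care is $E_{i}'=E_{b}'$ in (h)$\Rightarrow$(a), which relies on null sequences in the strong dual of a barrelled space being equicontinuous together with $E_{b}'$ being bornological, exactly as in \prettyref{cor:existence_frechet_predual} (f)$\Rightarrow$(a).
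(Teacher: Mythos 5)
Your proposal is correct and follows essentially the same route as the paper: the equivalence of (a)--(f) by running the proofs of \prettyref{thm:existence_scb_linearisation} and \prettyref{cor:existence_frechet_predual} in parallel, the link to (g) via \cite[Theorem 8.3.44, p.~261]{bonet1987} applied to the bornological strong dual, the passage (g)$\Rightarrow$(h) via \prettyref{rem:BBC_CNC} (d) and \prettyref{rem:predual_independent}, and the return from distinguishedness via Mujica's theorem, $E_{i}'=E_{b}'$ and \prettyref{rem:point_eval_in_predual}. The only cosmetic difference is that you close the cycle with (h)$\Rightarrow$(a) directly instead of the paper's (h)$\Rightarrow$(g) (obvious) followed by (g)$\Rightarrow$(a), which changes nothing of substance.
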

\begin{proof}
The first seven equivalences follow from the proofs of \prettyref{thm:existence_scb_linearisation} 
and \prettyref{cor:existence_frechet_predual} where one uses for the implication (g)$\Rightarrow$(a) 
in addition \prettyref{rem:point_eval_in_predual}. 

(g)$\Rightarrow$(h) If $(\F,\tau)$ satisfies $\operatorname{(BBC)}$ for some $\widetilde{\tau}$ such that 
$\tau_{\operatorname{p}}\leq\widetilde{\tau}$, then it satisfies  $\operatorname{(BBC)}$ for $\tau_{\operatorname{p}}$ 
by \prettyref{rem:BBC_CNC} (d). Further, we have $\F_{\mathcal{B},\tau_{\operatorname{p}}}'=\F_{\mathcal{B},\widetilde{\tau}}'$ 
and $\beta_{\mathcal{B},\tau_{\operatorname{p}}}=\beta_{\mathcal{B},\widetilde{\tau}}=\beta$ 
by \prettyref{rem:predual_independent}, implying that 
$(\F_{\mathcal{B},\tau_{\operatorname{p}}}',\beta)$ 
is distinguished.

(h)$\Rightarrow$(g) This implication is obvious. 
\end{proof}

\begin{cor}\label{cor:existence_sB_linearisation}
Let $(\F,\tau)$ be a bornological locally convex Hausdorff space of $\K$-valued functions on a non-empty set $\Omega$ 
and $\mathcal{B}$ the family of $\tau$-bounded sets.
Then the following assertions are equivalent.
\begin{enumerate}
\item[(a)] $(\F,\tau)$ admits a strong Banach linearisation.
\item[(b)] $(\F,\tau)$ has a semi-Montel prebidual $(Y,\varphi)$ such that $Y_{b}'$ is completely normable and for every $x\in\Omega$ 
there is a unique $y_{x}'\in Y'$ such that $\delta_{x}=\varphi(\cdot)(y_{x}')$.
\item[(c)] $(\F,\tau)$ is a completely normable space satisfying $\operatorname{(BBC)}$ and $\operatorname{(CNC)}$ 
for some $\widetilde{\tau}$ such that $\tau_{\operatorname{p}}\leq\widetilde{\tau}$ and $(\F,\gamma)$ 
satisfies $\operatorname{(B\tau B}\gamma)$.
\item[(d)] $(\F,\tau)$ is a completely normable space satisfying $\operatorname{(BBCl)}$ 
for some $\widetilde{\tau}$ such that $\tau_{\operatorname{p}}\leq\widetilde{\tau}\leq\tau$ 
and $(\F,\gamma)$ is semi-reflexive. 
\item[(e)] $(\F,\tau)$ is a completely normable space satisfying $\operatorname{(BBC)}$ for some $\widetilde{\tau}$ 
such that $\tau_{\operatorname{p}}\leq\widetilde{\tau}$.
\item[(f)] There is a norm $\vertiii{\cdot}$ on $\F$ which induces $\tau$ such that $B_{\vertiii{\cdot}}$ is $\widetilde{\tau}$-compact for some $\widetilde{\tau}$ such that $\tau_{\operatorname{p}}\leq\widetilde{\tau}$.
\item[(g)] There is a norm $\vertiii{\cdot}$ on $\F$ which induces $\tau$ such that $B_{\vertiii{\cdot}}$ is $\widetilde{\tau}$-compact for some $\widetilde{\tau}$ such that $\delta_{x}\in \F_{\mathcal{B},\widetilde{\tau}}'$ for all $x\in\Omega$.
\item[(h)] $(\F,\tau)$ is a completely normable space satisfying $\operatorname{(BBC)}$ for $\tau_{\operatorname{p}}$.
\item[(i)] There is a norm $\vertiii{\cdot}$ on $\F$ which induces $\tau$ such that $B_{\vertiii{\cdot}}$ is 
$\tau_{\operatorname{p}}$-compact.
\end{enumerate}
\end{cor}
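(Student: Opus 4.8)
The plan is to combine the proof of \prettyref{thm:existence_scb_linearisation} with the normability bookkeeping from \prettyref{cor:existence_banach_predual}, exactly as \prettyref{cor:existence_sF_linearisation} combines \prettyref{thm:existence_scb_linearisation} with \prettyref{cor:existence_frechet_predual}. First I would record that, $(\F,\tau)$ being normable, the family $\mathcal{B}$ is of countable type, so \prettyref{prop:mixed=gen_mixed} gives $\widetilde{\gamma}=\gamma$ and makes $(\F,\widetilde{\gamma})$ automatically satisfy $\operatorname{(B\tau B}\widetilde{\gamma})$ whenever $\operatorname{(BBCl)}$ holds for some $\widetilde{\tau}\leq\tau$. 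Together with \prettyref{rem:BBC_CNC} (c) this identifies (c), (d), (e) here with the normable specialisations of \prettyref{thm:existence_scb_linearisation} (c), (d), (e), while \prettyref{prop:BBCl_CNC_normable} (b)$\Rightarrow$(a) supplies the $\operatorname{(CNC)}$ that is left implicit in (d) in the completely normable setting.

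For the equivalences among (a)--(e) I would retrace the proof of \prettyref{thm:existence_scb_linearisation}, inserting at the relevant points that a strong Banach linearisation $(\delta,Y,T)$ has $Y$, and hence $Y_{b}'$, completely normable, and that $\F_{\mathcal{B},\widetilde{\tau}}'=(\F,\gamma)'$ is completely normable by \prettyref{prop:predual_complete} (c): the argument of \prettyref{thm:existence_scb_linearisation} (a)$\Rightarrow$(c) still produces a $\widetilde{\tau}=\sigma_{T}(\F,Y)$ with $\tau_{\operatorname{p}}\leq\widetilde{\tau}$ and, since $\F$ is topologically isomorphic to the completely normable space $Y_{b}'$, (c) follows; conversely \prettyref{cor:scb_linearisation} and \prettyref{cor:scb_linearisation_semi_reflexive} together with \prettyref{rem:point_eval_in_predual} turn (c) resp.\ (d) into a strong linearisation whose target $\F_{\mathcal{B},\widetilde{\tau}}'$ is completely normable, i.e.\ into a strong Banach linearisation; and (b)$\Rightarrow$(a) is \prettyref{thm:existence_scb_linearisation} (b)$\Rightarrow$(a) with ``complete barrelled'' replaced by ``Banach''.

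The equivalence (e)$\Leftrightarrow$(f) is \prettyref{prop:BBC_CNC_normable}, where the only thing to check is that the topology $\widetilde{\tau}$ --- hence the requirement $\tau_{\operatorname{p}}\leq\widetilde{\tau}$ --- is left unchanged in its proof (only the norm on $\F$ is replaced). For (f)$\Rightarrow$(g) one uses \prettyref{rem:point_eval_in_predual} (as $\tau_{\operatorname{p}}\leq\widetilde{\tau}$ gives $\delta_{x}\in\F_{\mathcal{B},\widetilde{\tau}}'$), and for (g)$\Rightarrow$(a) one applies \prettyref{prop:BBC_CNC_normable} (c)$\Rightarrow$(a) to obtain $\operatorname{(BBC)}$ and $\operatorname{(CNC)}$ for $\widetilde{\tau}$ and then \prettyref{cor:scb_linearisation} and \prettyref{prop:predual_complete} (c). Next, (e)$\Leftrightarrow$(h) follows from \prettyref{rem:BBC_CNC} (d), which passes from $\operatorname{(BBC)}$ for $\widetilde{\tau}\geq\tau_{\operatorname{p}}$ to $\operatorname{(BBC)}$ for $\tau_{\operatorname{p}}$, combined with \prettyref{prop:BBC_CNC_normable} (b)$\Rightarrow$(a), which supplies $\operatorname{(CNC)}$ for $\tau_{\operatorname{p}}$ once $(\F,\tau)$ is completely normable; and (h)$\Leftrightarrow$(i) is \prettyref{prop:BBC_CNC_normable} applied with $\widetilde{\tau}\coloneqq\tau_{\operatorname{p}}$ --- or, more directly, a $\widetilde{\tau}$-compact set with $\tau_{\operatorname{p}}\leq\widetilde{\tau}$ is already $\tau_{\operatorname{p}}$-compact because $\tau_{\operatorname{p}}$ is Hausdorff on $\F$, which yields (f)$\Leftrightarrow$(i) at once. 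I do not expect a genuine obstacle, since the mathematical content is entirely contained in the results cited above; the only point demanding care is to carry the same $\widetilde{\tau}$ --- and therefore the condition $\tau_{\operatorname{p}}\leq\widetilde{\tau}$ --- through every application of \prettyref{prop:BBC_CNC_normable}, \prettyref{prop:BBCl_CNC_normable} and \prettyref{rem:BBC_CNC} (d), so that the pointwise-convergence requirement is never silently dropped.
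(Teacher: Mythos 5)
Your proposal is correct and follows essentially the same route as the paper: the equivalences (a)--(g) are obtained by merging the proof of \prettyref{thm:existence_scb_linearisation} with the normable bookkeeping of \prettyref{cor:existence_banach_predual} (including \prettyref{prop:mixed=gen_mixed} for $\widetilde{\gamma}=\gamma$ and \prettyref{prop:BBC_CNC_normable} for (e)$\Leftrightarrow$(f)), while (e)$\Leftrightarrow$(h) uses \prettyref{rem:BBC_CNC} (d) with $\widetilde{\tau}_{0}\coloneqq\tau_{\operatorname{p}}$ and (h)$\Leftrightarrow$(i) is \prettyref{prop:BBC_CNC_normable} with $\widetilde{\tau}\coloneqq\tau_{\operatorname{p}}$, exactly as in the paper. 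Your added care about carrying the same $\widetilde{\tau}$ (and hence $\tau_{\operatorname{p}}\leq\widetilde{\tau}$) through each cited result is the right point to watch and is consistent with the paper's argument.
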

\begin{proof}
The first seven equivalences follow from the proofs of \prettyref{thm:existence_scb_linearisation} 
and \prettyref{cor:existence_banach_predual}. 

(e)$\Rightarrow$(h) This implication follows from \prettyref{rem:BBC_CNC} (d) with 
$\widetilde{\tau}_{0}\coloneqq \tau_{\operatorname{p}}$.

(h)$\Rightarrow$(e) This implication is obvious.

(h)$\Leftrightarrow$(i) This equivalence follows from \prettyref{prop:BBC_CNC_normable} 
with $\widetilde{\tau}\coloneqq \tau_{\operatorname{p}}$. 
\end{proof}

We close this section with a characterisation of continuous strong linearisations. 
We call a topological space $\Omega$ a $gk_{\R}$\emph{-space} 
if for any completely regular space $Y$ and any map $f\colon\Omega\to Y$, 
whose restriction to each compact $K\subset\Omega$ is continuous, the map is already continuous on 
$\Omega$. If a $gk_{\R}$-space $\Omega$ is also completely regular, then it is called a  
$k_{\R}$\emph{-space} (see \cite[(2.3.7) Proposition, p.~22]{buchwalter1969}). 
Examples of Hausdorff $gk_{\R}$-spaces are Hausdorff $k$-spaces by 
\cite[3.3.21 Theorem, p.~152]{engelking1989}. Examples of Hausdorff $k_{\R}$-spaces are 
metrisable spaces by \cite[Proposition 11.5, p.~181]{james1999} and 
\cite[3.3.20 Theorem, p.~152]{engelking1989},  locally compact Hausdorff spaces 
and strong duals of Fr\'echet--Montel spaces (\emph{DFM-spaces}) 
by \cite[Proposition 3.27, p.~95]{fabian2011} and \cite[4.11 Theorem (5), p.~39]{kriegl1997}. 
The underlying idea of the proof of our next result comes from \cite[Theorem 2.1, p.~187]{jaramillo2009}.

\begin{prop}\label{prop:point_eval_kR}
Let $(\F,\tau)$ be a locally convex Hausdorff space of $\K$-valued continuous functions on a 
non-empty Hausdorff $gk_{\R}$-space $\Omega$ and $\mathcal{B}$ the family of $\tau$-bounded sets. 
If $(\F,\tau)$ satisfies $\operatorname{(BBC)}$ 
for some $\widetilde{\tau}$ such that $\tau_{\operatorname{co}}\leq\widetilde{\tau}$, then the map 
\[
\Delta\colon \Omega\to 
(\F_{\mathcal{B},\widetilde{\tau}}',\beta),\; 
\Delta(x)\coloneqq\delta_{x},
\] 
is continuous. 
\end{prop}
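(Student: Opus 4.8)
The plan is to exploit the $gk_{\R}$-property of $\Omega$: since $(\F_{\mathcal{B},\widetilde{\tau}}',\beta)$ is a locally convex Hausdorff space, hence completely regular, it suffices to show that the restriction $\Delta_{\mid K}$ is continuous for every compact $K\subset\Omega$. So fix a compact $K\subset\Omega$ and a net $(x_\iota)$ in $K$ converging to some $x\in K$; I want $\delta_{x_\iota}\to\delta_x$ in $(\F_{\mathcal{B},\widetilde{\tau}}',\beta)$, i.e. uniformly on every $\tau$-bounded set $B\in\mathcal{B}$. By Remark \ref{rem:BBC_CNC} (b) (together with \prettyref{rem:predual_independent}, since $\tau_{\operatorname{co}}\leq\widetilde{\tau}$ lets us pass to the coarser $\tau_{\operatorname{co}}$ if convenient), it is enough to check uniform convergence on the sets $B\in\mathcal{B}_1$, i.e. on the absolutely convex $\tau$-bounded $\widetilde{\tau}$-compact — in particular $\tau_{\operatorname{co}}$-compact — sets $B$.

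Next I would translate ``$\beta$-convergence on $B\in\mathcal{B}_1$'' into an equicontinuity statement on $B$. Fix such a $B$. Since $B$ is $\widetilde{\tau}$-compact and $\tau_{\operatorname{co}}\leq\widetilde{\tau}$, on $B$ the topologies $\widetilde{\tau}$ and $\tau_{\operatorname{co}}$ agree (a continuous bijection from a compact space to a Hausdorff space is a homeomorphism), so $B$ is $\tau_{\operatorname{co}}$-compact and the relative $\tau_{\operatorname{co}}$- and $\widetilde\tau$-topologies on $B$ coincide. The key point is that on the compact set $B\subset(\mathcal{C}(\Omega),\tau_{\operatorname{co}})$ the evaluation map $\Omega\times B\to\K$, $(t,f)\mapsto f(t)$, is jointly continuous: this is the standard fact that evaluation is continuous on $\mathcal{C}(\Omega)$ equipped with the compact-open topology when restricted to a compact subset — more concretely, for a compact $K$ and $\varepsilon>0$, a $\tau_{\operatorname{co}}$-compact $B$ is equicontinuous at each point of $\Omega$ by Ascoli's theorem, so for $x\in\Omega$ there is a neighbourhood $V$ of $x$ with $|f(t)-f(x)|\le\varepsilon$ for all $t\in V$, $f\in B$. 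Hence for a net $x_\iota\to x$ in $\Omega$ one gets $\sup_{f\in B}|\delta_{x_\iota}(f)-\delta_x(f)|=\sup_{f\in B}|f(x_\iota)-f(x)|\to 0$, which is exactly $\delta_{x_\iota}\to\delta_x$ in the $\widetilde\beta_{\mathcal B_1,\widetilde\tau}$-topology, i.e. in $\beta$.

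Putting this together: $\Delta$ is well-defined into $\F_{\mathcal{B},\widetilde{\tau}}'$ because $\tau_{\operatorname{p}}\leq\tau_{\operatorname{co}}\leq\widetilde\tau$ gives $\delta_x\in(\F,\widetilde\tau)'\subset\F_{\mathcal{B},\widetilde{\tau}}'$ for every $x\in\Omega$ (this is \prettyref{rem:point_eval_in_predual}); the net argument above shows $\Delta_{\mid K}$ is continuous for each compact $K\subset\Omega$; and the $gk_{\R}$-property then yields continuity of $\Delta$ on all of $\Omega$ into the completely regular space $(\F_{\mathcal{B},\widetilde{\tau}}',\beta)$. I expect the main obstacle to be the justification that a $\tau_{\operatorname{co}}$-compact (equivalently $\widetilde\tau$-compact) set $B\in\mathcal{B}_1$ is equicontinuous as a family of functions on $\Omega$ — i.e. getting the Ascoli-type argument cleanly, handling the relative-topology identification $\widetilde\tau_{\mid B}=\tau_{\operatorname{co}\,\mid B}$, and making sure that uniform convergence of $f(x_\iota)\to f(x)$ over $f\in B$ for every $B\in\mathcal{B}_1$ really does characterise $\beta$-convergence via the equality $\beta=\widetilde\beta_{\mathcal{B}_1,\widetilde\tau}$ recorded before \prettyref{rem:predual_independent}.
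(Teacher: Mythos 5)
Your proposal follows essentially the same route as the paper: reduce via the $gk_{\R}$-property to continuity of $\Delta_{\mid K}$ on compact $K\subset\Omega$, use that the absolutely convex $\widetilde{\tau}$-compact sets $B\in\mathcal{B}_{1}$ are $\tau_{\operatorname{co}}$-compact, and then obtain uniform convergence of $f(x_{\iota})\to f(x)$ over $f\in B$ from an Ascoli-type equicontinuity statement. The one point to be careful about is exactly the one you flag: the converse direction of Ascoli's theorem (compact $\Rightarrow$ equicontinuous) requires local compactness of the underlying space, so you cannot conclude directly that a $\tau_{\operatorname{co}}$-compact $B\subset\mathcal{C}(\Omega)$ is equicontinuous at every point of a general Hausdorff $gk_{\R}$-space $\Omega$. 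The paper's fix, which you should adopt, is to first push $B$ forward under the continuous restriction map $(\F,\tau_{\operatorname{co}})\to(\mathcal{C}(K),\|\cdot\|_{K})$, so that $B_{\mid K}$ is a compact subset of $\mathcal{C}(K)$ with $K$ compact Hausdorff (hence locally compact), and then apply Ascoli there to get equicontinuity of $B_{\mid K}$ on $K$; since your net lives in $K$ and relative neighbourhoods in $K$ suffice for continuity of $\Delta_{\mid K}$, this is all that is needed. With that adjustment your argument is correct and coincides with the paper's proof.
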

\begin{proof}
First, we note that the map $\Delta$ is well-defined by \prettyref{rem:point_eval_in_predual} because 
$\tau_{\operatorname{p}}\leq\tau_{\operatorname{co}}\leq\widetilde{\tau}$. 
Since $\Omega$ is a $gk_{\R}$-space and the locally convex Hausdorff space $(\F_{\mathcal{B},\widetilde{\tau}}',\beta)$ completely regular by \cite[Proposition 3.27, p.~95]{fabian2011}, 
we only need to show that the restricted map $\Delta_{\mid K}$ is 
continuous for every compact set $K\subset\Omega$. 
Let $B\in\mathcal{B}$ be absolutely convex and $\widetilde{\tau}$-compact and $K\subset\Omega$ 
a compact set. Due to \prettyref{rem:BBC_CNC} (d) $B$ is also $\tau_{\operatorname{co}}$-compact.
The restriction 
$(\F,\tau_{\operatorname{co}})\to (\mathcal{C}(K),\|\cdot\|_{K}),\;f\mapsto f_{\mid K}$,
is continuous where $\|g\|_{K}\coloneqq \sup_{x\in K}|g(x)|$ for all $g\in \mathcal{C}(K)$. 
This implies that $B_{\mid K}\coloneqq\{f_{\mid K}\;|\;f\in B\}$ is $\|\cdot\|_{K}$-compact. Due 
to \cite[Theorem 47.1 (Ascoli's theorem) (b), p.~290]{munkres2000} we obtain that $B_{\mid K}$ is equicontinuous. Hence  for every $\varepsilon>0$ and $x\in K$ there is a neighbourhood 
$U(x)\subset K$ of $x$ such that for all $y\in U(x)$ we have 
\[
\sup_{f\in B}|f(x)-f(y)|\leq\varepsilon,
\]
which implies 
\[
\sup_{f\in B}|\Delta(x)(f)-\Delta(y)(f)|=\sup_{f\in B}|f(x)-f(y)|\leq\varepsilon.
\]
Therefore $\Delta_{\mid K}$ is continuous, which closes the proof.  
\end{proof}

\begin{thm}\label{thm:existence_scb_cont_linearisation}
Let $(\F,\tau)$ be a bornological locally convex Hausdorff space of $\K$-valued continuous functions on a non-empty 
topological Hausdorff space $\Omega$ and $\mathcal{B}$ the family of $\tau$-bounded sets.
Consider the following assertions.
\begin{enumerate}
\item[(a)] $(\F,\tau)$ admits a continuous strong complete barrelled linearisation.
\item[(b)] $(\F,\tau)$ satisfies $\operatorname{(BBC)}$ and $\operatorname{(CNC)}$ for some $\widetilde{\tau}$ such that $\tau_{\operatorname{p}}\leq\widetilde{\tau}$, and every $B\in\mathcal{B}$ 
is equicontinuous.
\item[(c)] $(\F,\tau)$ satisfies $\operatorname{(BBC)}$ and $\operatorname{(CNC)}$ for some $\widetilde{\tau}$ such that 
\[
\Delta\colon \Omega\to 
(\F_{\mathcal{B},\widetilde{\tau}}',\beta),\;\Delta(x)\coloneqq\delta_{x},
\] 
is a well-defined continuous map.
\item[(d)] $(\F,\tau)$ satisfies $\operatorname{(BBC)}$ and $\operatorname{(CNC)}$ for some $\widetilde{\tau}$ such that 
$\tau_{\operatorname{co}}\leq\widetilde{\tau}$.
\end{enumerate}
Then it holds (a)$\Leftrightarrow$(b)$\Leftrightarrow$(c). If $\Omega$ is a $gk_{\R}$-space, then it holds (d)$\Rightarrow$(c).
\end{thm}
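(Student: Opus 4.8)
The plan is to run the cycle $(a)\Rightarrow(b)\Rightarrow(c)\Rightarrow(a)$, which gives all three equivalences, and to treat $(d)\Rightarrow(c)$ separately when $\Omega$ is a $gk_{\R}$-space. For $(a)\Rightarrow(b)$, let $(\delta,Y,T)$ be a continuous strong complete barrelled linearisation of $\F$. Being in particular a strong complete barrelled linearisation, \prettyref{thm:existence_scb_linearisation} already yields that $(\F,\tau)$ satisfies $\operatorname{(BBC)}$ and $\operatorname{(CNC)}$ for some $\widetilde{\tau}$ with $\tau_{\operatorname{p}}\leq\widetilde{\tau}$ (one may take $\widetilde{\tau}=\sigma_{T}(\F,Y)$), so only the equicontinuity of every $B\in\mathcal{B}$ remains. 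Since $T$ is a topological isomorphism, $T(B)$ is $\beta(Y',Y)$-bounded, hence equicontinuous since $Y$ is barrelled; thus there is an absolutely convex $0$-neighbourhood $U$ in $Y$ with $|T(f)(y)|\leq 1$ for all $f\in B$ and $y\in U$. Given $x_{0}\in\Omega$ and $\varepsilon>0$, continuity of $\delta$ at $x_{0}$ provides a neighbourhood $W$ of $x_{0}$ with $\delta(x)-\delta(x_{0})\in\varepsilon U$ for $x\in W$, whence
\[
\sup_{f\in B}|f(x)-f(x_{0})|=\sup_{f\in B}|T(f)(\delta(x)-\delta(x_{0}))|\leq\varepsilon\quad\text{for all }x\in W.
\]
I expect this step — turning boundedness of $T(B)$ together with continuity of $\delta$ into equicontinuity of $B$ — to be the only genuinely new ingredient; the remaining implications are bookkeeping with the results of \prettyref{sect:existence}.

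For $(b)\Rightarrow(c)$ I would keep the same $\widetilde{\tau}$. Since $\tau_{\operatorname{p}}\leq\widetilde{\tau}$, \prettyref{rem:point_eval_in_predual} shows that $\Delta$ is well-defined. As $\beta$ is the topology of uniform convergence on the members of $\mathcal{B}$, continuity of $\Delta$ at a point $x_{0}\in\Omega$ means exactly that for every $B\in\mathcal{B}$ and $\varepsilon>0$ there is a neighbourhood $W$ of $x_{0}$ with $\sup_{f\in B}|f(x)-f(x_{0})|\leq\varepsilon$ for $x\in W$, i.e.\ that $B$ is equicontinuous at $x_{0}$, which is assumed; hence $\Delta$ is continuous and $(c)$ holds. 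For $(c)\Rightarrow(a)$, well-definedness of $\Delta$ is precisely $\delta_{x}\in\F_{\mathcal{B},\widetilde{\tau}}'$ for all $x\in\Omega$, so \prettyref{cor:scb_linearisation} gives that $(\Delta,\F_{\mathcal{B},\widetilde{\tau}}',\mathcal{I})$ is a strong complete barrelled linearisation of $\F$; it is continuous because $\Delta\colon\Omega\to(\F_{\mathcal{B},\widetilde{\tau}}',\beta)$ is continuous, so $(\F,\tau)$ admits a continuous strong complete barrelled linearisation.

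Finally, for $(d)\Rightarrow(c)$ when $\Omega$ is a $gk_{\R}$-space I would keep the $\widetilde{\tau}$ from $(d)$. Because $\tau_{\operatorname{p}}\leq\tau_{\operatorname{co}}\leq\widetilde{\tau}$, \prettyref{rem:point_eval_in_predual} makes $\Delta$ well-defined, and \prettyref{prop:point_eval_kR} — whose hypotheses are met exactly, $\F$ being a space of continuous functions on the $gk_{\R}$-space $\Omega$ and $\operatorname{(BBC)}$ holding for some $\widetilde{\tau}\geq\tau_{\operatorname{co}}$ — yields that $\Delta$ is continuous, which is $(c)$. This completes the plan.
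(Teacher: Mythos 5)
Your proposal is correct and follows essentially the same route as the paper: (a)$\Rightarrow$(b) via \prettyref{thm:existence_scb_linearisation} plus the identity $\sup_{f\in B}|f(x)-f(x_{0})|=\sup_{f\in B}|T(f)(\delta(x))-T(f)(\delta(x_{0}))|$ combined with continuity of $\delta$, then (b)$\Rightarrow$(c)$\Rightarrow$(a) by the equivalence of equicontinuity of the $B\in\mathcal{B}$ with continuity of $\Delta$ and by \prettyref{cor:scb_linearisation}, and (d)$\Rightarrow$(c) via \prettyref{prop:point_eval_kR}. The only cosmetic difference is that you unpack the equicontinuity of $T(B)$ in $Y'$ directly from barrelledness of $Y$, where the paper phrases the same fact through the map $\Phi_{T}$ of \prettyref{prop:predual_into_dual}.
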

\begin{proof}
(a)$\Rightarrow$(b) Due to \prettyref{thm:existence_scb_linearisation} we only need to show that 
every $B\in\mathcal{B}$ is equicontinuous. Since $(\F,\tau)$ admits a continuous strong complete barrelled linearisation, there are a 
complete barrelled locally convex Hausdorff space $Y$, a continuous map 
$\delta\colon\Omega\to Y$ and a topological isomorphism $T\colon(\F,\tau)\to Y_{b}'$ such that $T(f)\circ\delta=f$ 
for all $f\in\F$. 
For every $B\in\mathcal{B}$ we have 
\[
 \sup_{f\in B}|f(x)-f(y)|
=\sup_{f\in B}|T(f)(\delta(x))-T(f)(\delta(y))|
=\sup_{f\in B}|\Phi_{T}(\delta(x))(f)-\Phi_{T}(\delta(y))(f)|
\]
for all $x,y\in\Omega$ with $\Phi_{T}$ from \prettyref{prop:predual_into_dual} for $X\coloneqq\F$. 
From the continuity of $\delta$ and \prettyref{prop:predual_into_dual} 
we deduce that every $B\in\mathcal{B}$ is equicontinuous. 

(b)$\Rightarrow$(c) Due to \prettyref{thm:existence_scb_linearisation} we only need to show that $\Delta$ is continuous. 
This directly follows from the observation that
\[
\sup_{f\in B}|\Delta(x)(f)-\Delta(y)(f)|=\sup_{f\in B}|f(x)-f(y)|.
\]
for all $B\in\mathcal{B}$ and $x,y\in\Omega$ and the equicontinuity of every $B\in\mathcal{B}$.

(c)$\Rightarrow$(a) Due to \prettyref{cor:scb_linearisation} $(\Delta,\F_{\mathcal{B},\widetilde{\tau}}',\mathcal{I})$ 
is a strong complete barrelled linearisation of $\F$. Since $\Delta$ is continuous, this triple is also a continuous linearisation. 

(d)$\Rightarrow$(c) if $\Omega$ a $gk_{\R}$-space. This implication follows from \prettyref{cor:scb_linearisation} and 
\prettyref{prop:point_eval_kR}.
\end{proof}

Using \prettyref{cor:existence_scbDF_linearisation}, we may add further statements to 
\prettyref{thm:existence_scb_cont_linearisation} that are equivalent to statement (a). 
In the same way we may prove a corresponding characterisation of continuous strong complete barrelled DF-linearisation by using 
\prettyref{cor:existence_scbDF_linearisation}, of continuous strong Fr\'echet linearisations 
by using \prettyref{cor:existence_sF_linearisation} and of continuous strong Banach linearisations 
by using \prettyref{cor:existence_sB_linearisation}. 
In the case of continuous strong Banach linearisations we get 
\cite[Theorem 2.2, Corollary 2.3, p.~188--189]{jaramillo2009} back with an improvement of 
\cite[Corollary 2.3, p.~189]{jaramillo2009} from Hausdorff $k$-spaces $U=\Omega$ to Hausdorff $gk_{\R}$-spaces. 

\begin{exa}\label{ex:scb_linearisation_CP}
Let $\Omega\subset\R^{d}$ be open and $P(\partial)$ a hypoelliptic linear partial differential operator on $\mathcal{C}^{\infty}(\Omega)$. 

(i) Let $\mathcal{V}$ be a point-detecting directed family of continuous weights. 
If the space $(\mathcal{C}_{P}\mathcal{V}(\Omega),\tau_{\mathcal{V}})$ is bornological, then $(\Delta,\mathcal{C}_{P}\mathcal{V}(\Omega)_{\mathcal{B},\tau_{\operatorname{co}}}',\mathcal{I})$ is a continuous strong complete barrelled 
linearisation of $\mathcal{C}_{P}\mathcal{V}(\Omega)$ by \prettyref{ex:borno_frechet_hypo} and the proof 
of the implication (d)$\Rightarrow$(c) of \prettyref{thm:existence_scb_cont_linearisation} since 
$\Omega$ is metrisable and thus a Hausdorff $k_{\R}$-space.
If $\mathcal{V}$ is countable and increasing, then $(\Delta,\mathcal{C}_{P}\mathcal{V}(\Omega)_{\mathcal{B},\tau_{\operatorname{co}}}',\mathcal{I})$ is a continuous strong complete barrelled DF-linearisation of 
$\mathcal{C}_{P}\mathcal{V}(\Omega)$, and if $\mathcal{V}=\{v\}$, then $(\Delta,\mathcal{C}_{P}v(\Omega)_{\mathcal{B},\tau_{\operatorname{co}}}',\mathcal{I})$ is a continuous strong Banach linearisation of 
$\mathcal{C}_{P}v(\Omega)$. 

(ii) Let $\mathcal{V}\coloneqq (v_{n})_{n\in\N}$ be a drecreasing, regularly decreasing family of continuous functions $v_{n}\colon\Omega\to(0,\infty)$. 
Then $(\Delta,\mathcal{VC}_{P}(\Omega)_{\mathcal{B},\tau_{\operatorname{co}}}',\mathcal{I})$ is 
a continuous strong Fr\'echet linearisation of $\mathcal{VC}_{P}(\Omega)$
by \prettyref{ex:borno_DF_hypo}, the proof of the implication 
(f)$\Rightarrow$(a) of \prettyref{cor:existence_frechet_predual} and \prettyref{prop:point_eval_kR}.
\end{exa}

\bibliography{biblio_linearisation_existence}
\bibliographystyle{plainnat}

\end{document}